\let\@fnsymbol\@arabic
\newtheorem{theorem}{Theorem}\numberwithin{theorem}{section}
\newtheorem*{proposition*}{Proposition}
\newtheorem{lemma}[theorem]{Lemma}
\newtheorem{proposition}[theorem]{Proposition}
\newtheorem{theoremm}{Theorem}\numberwithin{theoremm}{subsection}
\newtheorem{lemmma}[theoremm]{Lemma}
\newtheorem{corrollary}[theoremm]{Corollary}
\newtheorem{propposition}[theoremm]{Proposition}
\numberwithin{theoremmm}{subsubsection}
\theoremstyle{definition}
\newtheorem*{definition*}{Definition}
\newtheorem{definition}[theorem]{Definition}
\newtheorem{deffinition}[theoremm]{Definition}
\newtheorem{nottation}[theoremm]{Notation}
\newtheorem{example}[theorem]{Example}
\newtheorem*{example*}{Example}
\newtheorem{exxample}[theoremm]{Example}
\newcommand{\Aut}{\operatorname{Aut}}
\newcommand{\lcm}{\operatorname{lcm}}
\newcommand{\ord}{\operatorname{ord}}
\newcommand{\Sym}{\operatorname{Sym}}
\newcommand{\Hol}{\operatorname{Hol}}
\newcommand{\id}{\operatorname{id}}
\newcommand{\Mod}[1]{\ (\textup{mod}\ #1)}
\newcommand{\IN}{\mathbb{N}}
\newcommand{\IF}{\mathbb{F}}
\newcommand{\IZ}{\mathbb{Z}}
\newcommand{\IP}{\mathbb{P}}
\newcommand{\CT}{\operatorname{CT}}
\newcommand{\omicron}{\operatorname{o}}
\newcommand{\lex}{\mathrm{lex}}
\newcommand{\rad}{\operatorname{rad}}
\newcommand{\imp}{\mathrm{imp}}
\newcommand{\fcpc}{\operatorname{fcpc}}
\newcommand{\CP}{\operatorname{CP}}
\newcommand{\GCP}{\operatorname{GCP}}
\newcommand{\FOCP}{\operatorname{FOCP}}
\newcommand{\aord}{\operatorname{aord}}
\newcommand{\reg}{\mathrm{reg}}
\newcommand{\pow}{\operatorname{pow}}
\newcommand{\IQ}{\mathbb{Q}}
\newcommand{\CI}{\operatorname{CI}}
\newcommand{\fcp}{\operatorname{fcp}}
\newcommand{\CC}{\operatorname{CC}}
\begin{document}

\title{Generalized cyclotomic mappings: Switching between polynomial, cyclotomic, and wreath product form}

\author{Alexander Bors\textsuperscript{1} \and Qiang Wang\thanks{School of Mathematics and Statistics, Carleton University, 1125 Colonel By Drive, Ottawa ON K1S 5B6, Canada. \newline First author's e-mail: \href{mailto:alexanderbors@cunet.carleton.ca}{alexanderbors@cunet.carleton.ca} \newline Second author's e-mail: \href{mailto:wang@math.carleton.ca}{wang@math.carleton.ca} \newline The authors are supported by the Natural Sciences and Engineering Research Council of Canada (RGPIN-2017-06410). \newline 2020 \emph{Mathematics Subject Classification}: Primary: 11T22. Secondary: 11A07, 11T06, 20E22. \newline \emph{Keywords and phrases}: Finite fields, Cyclotomy, Cyclotomic mappings, Permutation polynomials, Wreath product, Cycle structure, Involution.}}

\date{\today}

\maketitle

\abstract{This paper is concerned with so-called index $d$ generalized cyclotomic mappings of a finite field $\IF_q$, which are functions $\IF_q\rightarrow\IF_q$ that agree with a suitable monomial function $x\mapsto ax^r$ on each coset of the index $d$ subgroup of $\IF_q^{\ast}$. We discuss two important rewriting procedures in the context of generalized cyclotomic mappings and present applications thereof that concern index $d$ generalized cyclotomic permutations of $\IF_q$ and pertain to cycle structures, the classification of $(q-1)$-cycles and involutions, as well as inversion.}

\section{Introduction}\label{sec1}

\subsection{Background and main results}\label{subsec1P1}

Let $q$ be a prime power, let $\omega$ be a primitive root of $\IF_q$, let $d$ be a positive integer with $d\mid q-1$, and let $C$ be the (unique) index $d$ subgroup of $\IF_q^{\ast}$. Note that the cosets of $C$ in $\IF_q^{\ast}$ are of the form $C_i:=\omega^iC$ for $i=0,1,\ldots,d-1$. Index $d$ cyclotomic and generalized cyclotomic mappings of $\IF_q$ are an interesting yet still relatively well-controlled generalization of monomial mappings (functions $\IF_q\rightarrow\IF_q$ of the form $x\mapsto ax^r$ for fixed $a\in\IF_q$ and $r\in\IN$). They are studied in varying degrees of generality. A \emph{generalized cyclotomic mapping of $\IF_q$ of index $d$} is a function $\IF_q\rightarrow\IF_q$ of the form
\begin{equation}\label{cyclotomicFormEq}
f_{\omega}(\vec{a},\vec{r}):x\mapsto\begin{cases}0, & \text{if }x=0, \\ a_ix^{r_i}, & \text{if }x\in C_i,i\in\{0,1,\ldots,d-1\}\end{cases}
\end{equation}
for fixed $\vec{a}=(a_0,a_1,\ldots,a_{d-1})\in\IF_q^d$ and $\vec{r}=(r_0,r_1,\ldots,r_{d-1})\in\{1,\ldots,\frac{q-1}{d}\}^d$, and we call this representation of the function an \emph{$\omega$-cyclotomic form} of it. If $\vec{a'}=(a'_0,\ldots,a'_{d-1})\in\IF_q^d$ and $\vec{r'}=(r'_0,\ldots,r'_{d-1})\in\{1,2\ldots,\frac{q-1}{d}\}^d$ are other choices of such tuples, then $f_{\omega}(\vec{a},\vec{r})=f_{\omega}(\vec{a'},\vec{r'})$ if and only if $\vec{a}=\vec{a'}$ and $r_i=r'_i$ for all $i\in\{0,\ldots,d-1\}$ such that $a_i=a'_i\not=0$.

In case the entries of $\vec{r}$ are all equal, say to an integer $r$, the function $f_{\omega}(\vec{a},\vec{r})$ is called an \emph{$r$-th order cyclotomic mapping of $\IF_q$ of index $d$}. We note that the notions of a generalized cyclotomic mapping of $\IF_q$ of index $d$ and, for each fixed $r$, of an $r$-th order cyclotomic mapping of $\IF_q$ of index $d$, are independent of the choice of $\omega$. For more details, see \cite{Wan07a}.

In this paper, we are concerned with generalized cyclotomic mappings that are permutations of $\IF_q$, or \emph{generalized cyclotomic permutations of $\IF_q$} for short. Our two main results, given below as Theorems \ref{wreathIsoTheo} and \ref{cycloPolyTheo}, provide ways to rewrite the cyclotomic form (\ref{cyclotomicFormEq}) of such a permutation into two other important forms.

For the first rewriting method, which is the subject of Theorem \ref{wreathIsoTheo}, we observe that the generalized cyclotomic permutations of $\IF_q$ of index $d$ form a permutation group on $\IF_q$. As $0$ is a fixed point of every generalized cyclotomic permutation of $\IF_q$, only the behavior on $\IF_q^{\ast}$ is of interest, and so we will consider the permutation group on $\IF_q^{\ast}$ that consists of all restrictions to $\IF_q^{\ast}$ of generalized cyclotomic permutations of $\IF_q$ -- this permutation group will be denoted by $\GCP(d,q)$. The rewriting method of Theorem \ref{wreathIsoTheo} is a permutation group isomorphism between $\GCP(d,q)$ and a certain imprimitive permutational wreath product, described below. Since wreath products are well-understood, this allows one to study certain aspects of generalized cyclotomic permutations, such as cycle structure and conjugacy classes, using well-known results (dating back to a 1937 paper of P{\'o}lya, \cite{Pol37a}), and we discuss such applications in Subsections \ref{subsec5P1} to \ref{subsec5P2}. In this context, we note that various authors have studied the cycle structures of elements of certain classes of permutation polynomials over finite fields, see for example \cite{Ahm69a,CMT08a,LM91a,SSP12a}.

For the second rewriting method, recall that every function $f:\IF_q\rightarrow\IF_q$ admits a unique polynomial $P\in\IF_q[T]$ of degree at most $q-1$ such that $f(x)=P(x)$ for all $x\in\IF_q$. We call $P$ the \emph{polynomial form of $f$}. Polynomial forms of first-order cyclotomic mappings were determined by Niederreiter and Winterhof in \cite[Theorem 1]{NW05a}. The second author exhibited the polynomial forms of cyclotomic mappings of any fixed order in \cite[Lemma 1]{Wan07a}, and the permutation polynomials among those forms (i.e., those polynomials that correspond to cyclotomic permutations of a fixed order) have been intensely studied (see \cite{AGW09a,AW07a,PL01a,WL91a,Wan18a,Zie09a}, among others). In a more recent paper, the second author gave a formula for the polynomial form of an arbitrary generalized cyclotomic mapping of $\IF_q$, see \cite[Formula (3)]{Wan13a}. This formula is stated as one half of our second main result, Theorem \ref{cycloPolyTheo}, whereas the other half is new and provides an effective method to decide if a given polynomial of degree at most $q-1$ over $\IF_q$ is the polynomial form of an index $d$ generalized cyclotomic mapping of $\IF_q$ and, if so, output a cyclotomic form of it. As an application of this and Zheng-Yu-Zhang-Pei's inversion formula from \cite[Theorem 3.3]{ZYZP16a}, one can give a simple algorithm to pass from the polynomial form of a generalized index $d$ cyclotomic permutation $f$ of $\IF_q$ to the polynomial form of $f^{-1}$, see Subsection \ref{subsec5P3}.

Before stating the first main theorem, Theorem \ref{wreathIsoTheo}, we discuss some group-theoretic preliminaries. As is customary in group theory, we will work with right group actions in this paper. This means that in the symmetric group $\Sym(\Omega)$, where $\Omega$ is an arbitrary set, the product $\sigma\psi$ for $\sigma,\psi\in\Sym(\Omega)$ is the permutation that consists in applying first $\sigma$, then $\psi$ (i.e., $\sigma\psi=\psi\circ\sigma$ if $\circ$ is the usual composition of functions). Accordingly, it is more natural to use exponent notation for function values ($x^f$ instead of $f(x)$), since $\omega^{\sigma\psi}=(\omega^{\sigma})^{\psi}$ for $\omega\in\Omega$ and $\sigma,\psi\in\Sym(\Omega)$, as opposed to $(\sigma\psi)(\omega)=\psi(\sigma(\omega))$. We will still use the notation $f(x)$ in cases where we feel that it increases readability. Usually, for a positive integer $d$, the notation $\Sym(d)$ denotes the symmetric group on the set $\{1,2,\ldots,d\}$, but in this paper, it will be more convenient to define it as the symmetric group on $\{0,1,\ldots,d-1\}$.

We remind the reader of the following definition of an isomorphism of permutation groups (see Theorem \ref{wreathIsoTheo} for an occurrence of this concept in our paper):

\begin{deffinition}\label{permGroupIsoDef}
Let $G\leq\Sym(\Omega)$ and $H\leq\Sym(\Lambda)$ be permutation groups. An \emph{isomorphism of permutation groups $G\rightarrow H$} is a pair $(\beta,\iota)$ where
\begin{enumerate}
\item $\beta$ is a bijection $\Omega\rightarrow\Lambda$ and
\item $\iota$ is an abstract group isomorphism $G\rightarrow H$ such that for all $g\in G$ and all $\omega\in\Omega$, we have $\beta(\omega^g)=\beta(\omega)^{\iota(g)}$.
\end{enumerate}
\end{deffinition}

Wreath products are a fundamental concept in the theory of permutation groups. They were used by Wan and Lidl in \cite{WL91a} to study cyclotomic permutations, and in this paper, we will expand on their ideas. We will only need so-called \emph{imprimitive} wreath products, which are defined as follows:

\begin{deffinition}\label{impWreathDef}
Let $G$ be an abstract group, and let $P\leq\Sym(d)$ be a permutation group.
\begin{enumerate}
\item The \emph{(abstract) wreath product of $G$ and $P$}, written $G\wr P$, is the abstract group that can be defined as the external semidirect product $P\ltimes G^d$ where $P$ acts on $G^d$ by \enquote{coordinate permutations}. More explicitly, the elements of $G\wr P$ are ordered pairs of the form $(\sigma,\vec{g})$ with $\sigma\in P$ and $\vec{g}=(g_0,g_1,\ldots,g_{d-1})\in G^d$, and these elements are multiplied as follows:
\begin{align*}
&(\sigma,(g_0,g_1,\ldots,g_{d-1}))\cdot(\psi,(h_0,h_1,\ldots,h_{d-1})):= \\
&(\sigma\psi,(g_{\psi^{-1}(0)}h_0,g_{\psi^{-1}(1)}h_1,\ldots,g_{\psi^{-1}(d-1)}h_{d-1})).
\end{align*}
\item If $G\leq\Sym(\Omega)$ is a permutation group, then the abstract group $G\wr P$ is isomorphic to a certain permutation group on $\Omega\times\{0,1,\ldots,d-1\}$ that is called the \emph{imprimitive (permutational) wreath product of $G$ and $P$} and will be denoted by $G\wr_{\imp}P$. More explicitly, the function $G\wr P\rightarrow\Sym(\Omega\times\{0,1,\ldots,d-1\})$ that maps $(\sigma,(g_0,g_1,\ldots,g_{d-1}))\in G\wr P$ to the permutation
\[
(\omega,i)\mapsto(g_{\sigma(i)}(\omega),\sigma(i))
\]
is an isomorphism of abstract groups between $G\wr P$ and $G\wr_{\imp}P$.
\end{enumerate}
\end{deffinition}

As is customary, we will identify the abstract group $G\wr P$ with the permutation group $G\wr_{\imp}P$ via the isomorphism from Definition \ref{impWreathDef}(2). That is, we will write the elements of $G\wr_{\imp}P$ as pairs $(\sigma,\vec{g})$ but think of the associated permutations on $\Omega\times\{0,1,\ldots,d-1\}$.

The idea behind the definition of the imprimitive wreath product $G\wr P$ is that $\Omega\times\{0,1,\ldots,d-1\}$ is a disjoint union of $d$ copies of $\Omega$, indexed by the elements of $\{0,1,\ldots,d-1\}$, on which $G\wr_{\imp}P$ acts by compositions of
\begin{itemize}
\item permutations of the copies that match corresponding elements of $\Omega$ in different copies: $(\omega,i)^{(\sigma,(1_G,\ldots,1_G))}=(\omega,i^{\sigma})$; and
\item copy-wise applications of elements of $G$: $(\omega,i)^{(\id,(g_0,g_1,\ldots,g_{d-1}))}=(g_i(\omega),i)$.
\end{itemize}
The same behavior is exhibited by the elements of $\GCP(d,q)$ when viewing $\IF_q^{\ast}$ as a disjoint union of the $d$ cosets of the index $d$ subgroup $C$ of $\IF_q^{\ast}$ (and each coset being viewed as a copy of $C$). The role of $G$ is taken by permutations of the multiplicative group $C$ of the form $\lambda(r,a):c\mapsto ac^r$ where $a\in C$ and $r$ is an integer that is coprime to $|C|$ -- these permutations form a permutation group on $C$ which we denote by $\Hol(C)$ (see Section \ref{sec3} for more details on this group-theoretic notation). This observation leads to the main statement of Theorem \ref{wreathIsoTheo} below. The theorem also contains statements on index $d$ cyclotomic permutations of $\IF_q$ (of all possible orders) as well as the index $d$ first-order cyclotomic permutations of $\IF_q$, which form subgroups of the group of generalized index $d$ cyclotomic permutations of $\IF_q$. We will denote the corresponding subgroups of $\GCP(d,q)$ (consisting of their restrictions to $\IF_q^{\ast}$) by $\CP(d,q)$ and $\FOCP(d,q)$ respectively and are now ready to state Theorem \ref{wreathIsoTheo}:

\begin{theoremm}\label{wreathIsoTheo}(Switching between cyclotomic form and wreath product form)
Let $q$ be a prime power, let $d$ be a divisor of $q-1$, let $C$ be the index $d$ subgroup of $\IF_q^{\ast}$, and let $\omega$ be a primitive root of $\IF_q$. Then $\GCP(d,q)$ is isomorphic, as a permutation group, to $\Hol(C)\wr_{\imp}\Sym(d)\leq\Sym(C\times\{0,1,\ldots,d-1\})$. More precisely, consider
\begin{itemize}
\item the bijection $\beta_{\omega}:C\times\{0,1,\ldots,d-1\}\rightarrow\IF_q^{\ast}$ that maps $(c,i)$ to $c\omega^i$ for $c\in C$ and $i\in\{0,1,\ldots,d-1\}$; and
\item the function $\iota_{\omega}:\Hol(C)\wr_{\imp}\Sym(d)\rightarrow\GCP(d,q)$ that maps the wreath product element $(\psi,(\lambda(s_i,b_i))_{i=0,1,\ldots,d-1})$ to
\[
f_{\omega}((\omega^{\psi(i)-is_{\psi(i)}}b_{\psi(i)})_{i=0,1,\ldots,d-1},(s_{\psi(i)})_{i=0,1,\ldots,d-1})_{\mid\IF_q^{\ast}}.
\]
\end{itemize}
The pair $(\beta_{\omega},\iota_{\omega})$ is an isomorphism of permutation groups $\Hol(C)\wr_{\imp}\Sym(d)\rightarrow\GCP(d,q)$. The inverse group isomorphism of $\iota_{\omega}$ is given by the formula
\begin{align*}
&(f_{\omega}((a_i)_{i=0,1,\ldots,d-1},(r_i)_{i=0,1,\ldots,d-1}))_{\mid\IF_q^{\ast}} \\
&\mapsto \\
&(\psi,(\lambda(r_{\psi^{-1}(i)},\omega^{r_{\psi^{-1}(i)}\psi^{-1}(i)-i}a_{\psi^{-1}(i)}))_{i=0,1,\ldots,d-1}),
\end{align*}
where $\psi=\psi_{\omega}(f_{\omega}(\vec{a},\vec{r}))$ is the unique permutation of the set $\{0,1,\ldots,d-1\}$ such that for all $i=0,1,\ldots,d-1$, the image of the coset $C_i=\omega^iC$ under $f_{\omega}(\vec{a},\vec{r})$ is the coset $C_{\psi(i)}$. Moreover, the following hold:
\begin{enumerate}
\item The pre-image under $\iota_{\omega}$ of $\CP(d,q)$ is the subgroup of $\Hol(C)\wr_{\imp}\Sym(d)$ that consists of all elements of the form
\[
(\psi,(\lambda(s,b_i))_{i=0,1,\ldots,d-1})
\]
where $\psi\in\Sym(d)$, $s$ is an integer coprime to $|C|$, and $b_0,b_1,\ldots,b_{d-1}\in C$.
\item The pre-image under $\iota_{\omega}$ of $\FOCP(d,q)$ is the subgroup of $\Hol(C)\wr_{\imp}\Sym(d)$ that consists of all elements of the form
\[
(\psi,(\lambda(1,b_i))_{i=0,1,\ldots,d-1})
\]
where $\psi\in\Sym(d)$ and $b_0,b_1,\ldots,b_{d-1}\in C$. In other words, it is the subgroup $C_{\reg}\wr_{\imp}\Sym(d)$, where $C_{\reg}\leq\Hol(C)$ is the image of the injective group homomorphism $C\rightarrow\Hol(C),c\mapsto\lambda(1,c)$.
\end{enumerate}
\end{theoremm}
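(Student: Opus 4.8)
The plan is to verify, essentially by unwinding the definitions plus one telescoping computation, that the pair $(\beta_{\omega},\iota_{\omega})$ meets the two clauses of Definition~\ref{permGroupIsoDef}, and then to harvest the remaining assertions — bijectivity of $\iota_{\omega}$ with the stated inverse, the homomorphism property, and parts~(1)--(2) — as corollaries. Write $n:=|C|=\frac{q-1}{d}$. That $\beta_{\omega}$ is a bijection needs no work: $\{C_i=\omega^iC:0\le i\le d-1\}$ is precisely the set of cosets of $C$ in $\IF_q^{\ast}$, so $(c,i)\mapsto c\omega^i$ is a bijection with inverse $x\mapsto(x\omega^{-i},i)$ on $C_i$.

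The one genuine calculation is the intertwining identity of clause~(2), and I would start there. Fix $g=(\psi,(\lambda(s_j,b_j))_{j})$ in $\Hol(C)\wr_{\imp}\Sym(d)$. By Definition~\ref{impWreathDef}(2) we have $(c,i)^g=(b_{\psi(i)}c^{s_{\psi(i)}},\psi(i))$, so $\beta_{\omega}((c,i)^g)=\omega^{\psi(i)}b_{\psi(i)}c^{s_{\psi(i)}}$. On the other hand $\iota_{\omega}(g)$ acts on $C_i$ by $x\mapsto\omega^{\psi(i)-is_{\psi(i)}}b_{\psi(i)}x^{s_{\psi(i)}}$ — here replacing $s_{\psi(i)}$ by its residue in $\{1,\ldots,n\}$ changes nothing, since $c^{s_{\psi(i)}}$ depends only on $s_{\psi(i)}\bmod n$ for $c\in C$ and the leading coefficient absorbs the leftover power of $\omega^{n}$ — and substituting $x=c\omega^i$ gives
\[
x^{\iota_{\omega}(g)}=\omega^{\psi(i)-is_{\psi(i)}}b_{\psi(i)}\,\omega^{is_{\psi(i)}}c^{s_{\psi(i)}}=\omega^{\psi(i)}b_{\psi(i)}c^{s_{\psi(i)}}=\beta_{\omega}((c,i)^g).
\]
The $\omega$-exponents cancel exactly because the twist $\psi(i)-is_{\psi(i)}$ was built into $\iota_{\omega}$ for that purpose. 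This proves $\beta_{\omega}(x^g)=\beta_{\omega}(x)^{\iota_{\omega}(g)}$ for all $x$. As by-products: the computation exhibits $\iota_{\omega}(g)_{\mid\IF_q^{\ast}}$ as the bijective transport of $g$ through $\beta_{\omega}$, hence a permutation of $\IF_q^{\ast}$, so $f_{\omega}(\cdots)$ is a permutation of $\IF_q$ and $\iota_{\omega}(g)\in\GCP(d,q)$; and $\iota_{\omega}(g)$ carries $C_i$ onto $C_{\psi(i)}$, so $\psi_{\omega}(\iota_{\omega}(g))=\psi$.

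Next I would show $\iota_{\omega}$ is bijective with the displayed inverse. Injectivity is immediate from the identity above and the injectivity of $\beta_{\omega}$. For surjectivity, take $h=f_{\omega}(\vec a,\vec r)_{\mid\IF_q^{\ast}}$ in $\GCP(d,q)$; all $a_i\ne 0$ (else $C_i$ maps into $\{0\}$, contradicting injectivity), so the cyclotomic form of $h$ is unique. Put $\psi=\psi_{\omega}(h)$. Restricting $h$ to $C_i$ shows $c\omega^i\mapsto\omega^{ir_i}a_ic^{r_i}$, and bijectivity on $C_i$ together with $h(C_i)=C_{\psi(i)}$ force $\gcd(r_i,n)=1$ and $\omega^{r_{\psi^{-1}(i)}\psi^{-1}(i)-i}a_{\psi^{-1}(i)}\in C$ for all $i$, so $\bigl(\lambda(r_{\psi^{-1}(i)},\omega^{r_{\psi^{-1}(i)}\psi^{-1}(i)-i}a_{\psi^{-1}(i)})\bigr)_{i}$ is a legitimate tuple in $\Hol(C)^{d}$. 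A direct substitution — using, when $h=\iota_{\omega}(g)$ with $g=(\psi,(\lambda(s_j,b_j))_j)$, that $r_{\psi^{-1}(i)}=s_i$ and $a_{\psi^{-1}(i)}=\omega^{i-\psi^{-1}(i)s_i}b_i$ — shows the displayed formula is a left inverse of $\iota_{\omega}$, and by the intertwining identity it is a right inverse too. The homomorphism property then comes for free: for all $g,g'$ and all $x$,
\[
\beta_{\omega}(x)^{\iota_{\omega}(gg')}=\beta_{\omega}(x^{gg'})=\beta_{\omega}((x^g)^{g'})=\beta_{\omega}(x^g)^{\iota_{\omega}(g')}=\beta_{\omega}(x)^{\iota_{\omega}(g)\iota_{\omega}(g')},
\]
and since $\beta_{\omega}$ is onto $\IF_q^{\ast}$ while every generalized cyclotomic permutation fixes $0$, we get $\iota_{\omega}(gg')=\iota_{\omega}(g)\iota_{\omega}(g')$. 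Hence $(\beta_{\omega},\iota_{\omega})$ is an isomorphism of permutation groups with the stated inverse.

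For (1) and (2) I would read off from the formula that $\iota_{\omega}(\psi,(\lambda(s_j,b_j))_j)$ has exponent vector $(s_{\psi(0)},\ldots,s_{\psi(d-1)})$ (entries reduced mod $n$), a permutation of $(s_0,\ldots,s_{d-1})$; this lies in $\CP(d,q)$ — i.e. has constant exponent vector — exactly when all $s_j$ agree mod $n$, which is part~(1), and part~(2) is the special case $s=1$, with $\{(\psi,(\lambda(1,b_i))_i)\}$ being $C_{\reg}\wr_{\imp}\Sym(d)$ by definition. I expect no conceptual obstacle anywhere; the only thing demanding care is keeping the conventions straight — the right-action convention, whether $\psi$ or $\psi^{-1}$ appears in the multiplication and action formulas of Definition~\ref{impWreathDef}, and the index shift between ``the monomial used on $C_i$'' and ``the monomial carried to copy $\psi(i)$'' — and arranging the twist $\omega^{\psi(i)-is_{\psi(i)}}$ so that the $\omega$-powers telescope in the second paragraph. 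Once that is right, everything else is formal.
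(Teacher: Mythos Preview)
Your proposal is correct and follows essentially the same route as the paper: both hinge on the single telescoping computation verifying the intertwining identity $\beta_{\omega}((c,i)^g)=\beta_{\omega}(c,i)^{\iota_{\omega}(g)}$, after which everything else is formal. The only minor divergence is in parts~(1)--(2): you read off directly that the exponent vector of $\iota_{\omega}(g)$ is a permutation of $(s_0,\ldots,s_{d-1})$ and hence constant iff all $s_j$ agree, whereas the paper shows the image of the constant-$s$ elements lands in $s$-th order cyclotomic permutations and then matches cardinalities ($d!\cdot|C|^d$) to conclude the map is onto that set --- both arguments are short and your direct one is arguably cleaner.
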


The following is our second main theorem:

\begin{theoremm}\label{cycloPolyTheo}(Switching between cyclotomic form and polynomial form)
Let $q$ be a prime power, let $d$ be a divisor of $q-1$, let $C$ be the index $d$ subgroup of $\IF_q^{\ast}$, let $\omega$ be a primitive root of $\IF_q$, and let $\zeta:=\omega^{(q-1)/d}$ (a primitive $d$-th root of unity in $\IF_q$). The following hold:
\begin{enumerate}
\item For all $\vec{a}=(a_0,\ldots,a_{d-1})\in\IF_q^d$ and all $\vec{r}=(r_0,\ldots,r_{d-1})\in\{1,2,\ldots,\frac{q-1}{d}\}^d$, the polynomial form of the index $d$ generalized cyclotomic mapping $f_{\omega}(\vec{a},\vec{r})$ of $\IF_q$ is
\begin{equation}\label{wangEq}
\frac{1}{d}\sum_{i,j=0}^{d-1}{\zeta^{-ij}a_iT^{j\cdot\frac{q-1}{d}+r_i}}.
\end{equation}
\item Algorithm \ref{algo1} below decides if a given polynomial $P\in\IF_q[T]$ with $\deg{P}\leq q-1$ is the polynomial form of an index $d$ generalized cyclotomic mapping $f$ of $\IF_q$ and, if so, outputs an $\omega$-cyclotomic form of $f$.
\end{enumerate}
\end{theoremm}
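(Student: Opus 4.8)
The plan is to deduce part (2) from part (1) (which is \cite{Wan13a}), so the real task is to characterize, in a way that can be checked mechanically, which polynomials $P\in\IF_q[T]$ of degree at most $q-1$ arise as the right-hand side of (\ref{wangEq}) for some admissible $(\vec{a},\vec{r})$, and to recover such a pair when it exists. Write $e:=\frac{q-1}{d}$. The first observation is purely combinatorial: every integer $m\in\{1,\dots,q-1\}$ has a unique representation $m=je+r$ with $j\in\{0,\dots,d-1\}$ and $r\in\{1,\dots,e\}$ (shifted division by $e$), so the monomial exponents $j\cdot\frac{q-1}{d}+r_i$ occurring in (\ref{wangEq}) run exactly over $\{1,\dots,q-1\}$, grouped according to the ``remainder'' $r$. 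Hence, writing $P=\sum_{m=0}^{q-1}c_mT^m$ and $c_{r,j}:=c_{je+r}$, a first necessary condition is $c_0=0$ (all exponents in (\ref{wangEq}) are positive, equivalently $f(0)=0$ for cyclotomic $f$), and comparing coefficients in (\ref{wangEq}) gives, for $1\le r\le e$ and $0\le j\le d-1$,
\[
c_{r,j}=\frac1d\sum_{i\,:\,r_i=r}\zeta^{-ij}a_i .
\]

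The engine of the argument is the finite Fourier transform over $\IF_q$ attached to the primitive $d$-th root of unity $\zeta=\omega^e$: since $\gcd(d,q)=1$, the map $\Phi\colon\IF_q^d\to\IF_q^d$, $(x_i)_i\mapsto\bigl(\sum_{i=0}^{d-1}\zeta^{-ij}x_i\bigr)_j$, is invertible with inverse $(y_j)_j\mapsto\bigl(\frac1d\sum_{j=0}^{d-1}\zeta^{ij}y_j\bigr)_i$, because $\sum_{j=0}^{d-1}\zeta^{jk}$ equals $d$ if $d\mid k$ and $0$ otherwise. I apply $\Phi^{-1}$ ``column by column'': for each $r\in\{1,\dots,e\}$ set $b^{(r)}_i:=\sum_{j=0}^{d-1}\zeta^{ij}c_{r,j}$. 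If $P$ is the polynomial form of $f_{\omega}(\vec{a},\vec{r})$, substituting the displayed formula for $c_{r,j}$ and collapsing the inner geometric sum shows $b^{(r)}_i=a_i$ when $r=r_i$ and $b^{(r)}_i=0$ otherwise; in particular, for every $i$ at most one $r$ makes $b^{(r)}_i\neq0$, and that $r$ equals $r_i$ whenever $a_i\neq0$. This proves necessity of the condition and, simultaneously, shows how to read $\vec{a}$ and $\vec{r}$ off the numbers $b^{(r)}_i$.

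For sufficiency I run the construction in reverse. Starting from an arbitrary $P$ of degree at most $q-1$ with $c_0=0$, compute the $b^{(r)}_i$ as above, and suppose that for each $i$ there is at most one $r=\rho(i)$ with $b^{(r)}_i\neq0$. Put $a_i:=b^{(\rho(i))}_i$ and $r_i:=\rho(i)$ when such an $r$ exists, and $a_i:=0$, $r_i:=1$ (or any fixed value — this is the harmless non-uniqueness of $\omega$-cyclotomic forms noted just after (\ref{cyclotomicFormEq})) otherwise. The array $(b^{(r)}_i)_{i,r}$ built from this $(\vec{a},\vec{r})$ has exactly the same entries as the one extracted from $P$, so applying $\Phi$ column by column returns the original coefficients $c_{r,j}$; hence (\ref{wangEq}) evaluated at $(\vec{a},\vec{r})$ equals $P$, i.e.\ $P$ is the polynomial form of $f_{\omega}(\vec{a},\vec{r})$. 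Algorithm \ref{algo1} is precisely an implementation of this equivalence: extract the coefficients of $P$, reindex them as the $c_{r,j}$, form the $b^{(r)}_i$ by the inverse Fourier sums, reject if $c_0\ne0$ or if some row $i$ has two or more nonzero $b^{(r)}_i$, and otherwise output $f_{\omega}(\vec{a},\vec{r})$. The only genuinely delicate point is the sufficiency direction — ensuring that the reconstruction does not merely produce \emph{some} cyclotomic mapping but one whose polynomial form is the given $P$ on the nose — and the clean way to see this is the bijectivity of $\Phi$, which forces the two coefficient arrays to coincide. Everything else is routine verification of the Fourier identities and bookkeeping of the exponent reindexing $m\leftrightarrow(r,j)$.
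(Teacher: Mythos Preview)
Your argument is correct and is essentially the same approach as the paper's: both prove correctness of Algorithm~\ref{algo1} by recognizing that the coefficient vector of $P$ in each residue class $r$ modulo $\frac{q-1}{d}$ is the discrete Fourier transform (equivalently, Vandermonde image) of the ``indicator-weighted'' vector $(a_i\cdot[r_i=r])_i$, so that inverting the transform recovers the $b^{(r)}_i$ and the cyclotomic data. The paper phrases this via the Vandermonde matrix $V(1,\zeta^{-1},\ldots,\zeta^{-(d-1)})$ and traces Algorithm~\ref{algo1} step by step, while you state it as a Fourier inversion and give a cleaner if-and-only-if characterization; the mathematical content is identical. One small point you gloss over: Algorithm~\ref{algo1} does not compute $b^{(r)}_i$ for all $r\in\{1,\ldots,\frac{q-1}{d}\}$ but only for the (at most $d$) remainders $\rho_\ell$ that actually occur among term degrees of $P$, together with early-exit checks in steps~3 and~4 --- these optimizations are sound because $b^{(r)}_i=0$ automatically whenever $r\notin\{\rho_0,\ldots,\rho_{k-1}\}$, and your characterization immediately implies that more than $d^2$ terms or more than $d$ distinct remainders forces rejection, but strictly speaking a proof of part~(2) as stated should connect your criterion to the specific steps of the algorithm (including the definition of $S_\ell$ in step~7 and the partition check in step~8, which encode exactly your ``at most one nonzero $b^{(r)}_i$ per row'' condition).
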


We note a few special terminologies and notations that are used in the formulation of Algorithm \ref{algo1}:
\begin{itemize}
\item The terminology \emph{term of $P$} denotes any polynomial of the form $cT^k$ where $c$ is the coefficient of $T^k$ in $P$ and $c\not=0$. For example, the polynomial $T^5-T^3+1\in\IF_3[T]$ has the three terms $T^5$, $-T^3$ and $1$.
\item The \emph{term degrees of $P$} are the polynomial degrees of the terms of $P$.
\item If $m$ is a positive integer and $n$ is any integer, then the \emph{remainder of $n$ upon division by $m$} is defined as the unique $r\in\{1,2,\ldots,m\}$ such that $n\equiv r\Mod{m}$. This differs from the standard definition of this concept, where the range for $r$ would be $\{0,1,\ldots,m-1\}$.
\item We enumerate the entries of a list or vector starting from $0$. For example, in the length $2$ column vector
\[
{3 \choose 5},
\]
we call $3$ the $0$-th entry, and $5$ the $1$-st entry.
\item For $x_0,x_1,\ldots,x_{d-1}\in\IF_q$, we denote by
\[
V(x_0,x_1,\ldots,x_{d-1})=\begin{pmatrix}1 & 1 & \cdots & 1 \\ x_0 & x_1 & \cdots & x_{d-1} \\ x_0^2 & x_1^2 & \cdots & x_{d-1}^2 \\ \vdots & \vdots & \vdots & \vdots \\ x_0^{d-1} & x_1^{d-1} & \cdots & x_{d-1}^{d-1}\end{pmatrix}
\]
the $(d\times d)$-Vandermonde matrix associated with $x_0,x_1,\ldots,x_{d-1}$.
\end{itemize}

\begin{algorithm}
\SetKwInOut{Input}{input}\SetKwInOut{Output}{output}

\Input{A polynomial $P\in\IF_q[T]$ with $\deg{P}\leq q-1$.}
\Output{The information whether $P$ is the polynomial form of a generalized index $d$ cyclotomic mapping $f$ of $\IF_q$ and, if so, an $\omega$-cyclotomic form of $f$.}
\BlankLine
\nl If the constant term in $P$ is nonzero, output \enquote{$P$ is not the polynomial form of any index $d$ generalized cyclotomic mapping of $\IF_q$.}, and halt.

\nl If $P=0$, let $\vec{a}$ resp.~$\vec{r}$ be the unique element of $\{0_{\IF_q}\}^d$ resp.~$\{1_{\IZ}\}^d$, output \enquote{$P$ is the polynomial form of the index $d$ generalized cyclotomic mapping $f_{\omega}(\vec{a},\vec{r})$ of $\IF_q$.}, and halt.

\nl If the number of terms of $P$ is larger than $d^2$, output \enquote{$P$ is not the polynomial form of any index $d$ generalized cyclotomic mapping of $\IF_q$.}, and halt.

\nl Let $\rho_0<\rho_1<\cdots<\rho_{k-1}$ denote the distinct remainders (in the non-standard sense defined above) of the integer divisions of the term degrees of $P$ by $\frac{q-1}{d}$. If $k>d$, output \enquote{$P$ is not the polynomial form of any index $d$ generalized cyclotomic mapping of $\IF_q$.}, and halt.

\nl For $\ell=0,1,\ldots,k-1$, let $\vec{v}_{\ell}$ denote the length $d$ column vector whose $j$-th entry is the coefficient of $T^{j\frac{q-1}{d}+\rho_{\ell}}$ in $P$ for $j=0,1,\ldots,d-1$.

\nl For $\ell=0,1,\ldots,k-1$, set $\vec{b}_{\ell}:=d\cdot V(1,\zeta^{-1},\zeta^{-2},\ldots,\zeta^{-(d-1)})^{-1}\cdot\vec{v}_{\ell}$. Write
\[
\vec{b}_{\ell}=\begin{pmatrix}b_0^{(\ell)} \\ b_1^{(\ell)} \\ \vdots \\ b_{d-1}^{(\ell)}\end{pmatrix}.
\]

\nl For $\ell=0,1,\ldots,k-1$, define
\[
S_{\ell}:=\begin{cases}\{i\in\{0,\ldots,d-1\}:b_i^{(0)}\not=0,\text{ or }b_i^{(\ell')}=0\text{ for all }\ell'=0,\ldots,k-1\}, & \text{if }\ell=0, \\ \{i\in\{0,\ldots,d-1\}:b_i^{(\ell)}\not=0\}, & \text{if }\ell>0.\end{cases}
\]

\nl If $(S_{\ell})_{\ell=0,1,\ldots,k-1}$ is not a partition of $\{0,1,\ldots,d-1\}$, output \enquote{$P$ is not the polynomial form of any index $d$ generalized cyclotomic mapping of $\IF_q$.}, and halt.

\nl For $i=0,1,\ldots,d-1$, let $\ell(i)$ be the unique index in $\{0,1,\ldots,k-1\}$ such that $i\in S_{\ell(i)}$, and set $a_i$ to be the $i$-th entry of $\vec{b}_{\ell(i)}$ and $r_i:=\rho_{\ell(i)}$.

\nl Set $\vec{a}:=(a_0,a_1,\ldots,a_{d-1})$ and $\vec{r}:=(r_0,r_1,\ldots,r_{d-1})$.

\nl Output \enquote{$P$ is the polynomial form of the index $d$ generalized cyclotomic mapping of $\IF_q$ with $\omega$-cyclotomic form $f_{\omega}(\vec{a},\vec{r})$.}.

\caption{Conversion from polynomial form to cyclotomic form.}\label{algo1}
\end{algorithm}

It is an interesting observation that for index $1$ (generalized) cyclotomic mappings of $\IF_q$, which are of the form $x\mapsto ax$ for some fixed $a\in\IF_q$ and all $x\in\IF_q$, composing them with functions of the form $x\mapsto x+b$ yields the class of affine functions $x\mapsto ax+b$ on $\IF_q$, which is also closed under function composition. This property is lost for $d>1$, but one may still consider functions $\IF_q\rightarrow\IF_q$ of the form
\begin{equation}\label{gengenEq}
x\mapsto\begin{cases}b, & \text{if }x=0, \\ a_ix^{r_i}+b, & \text{if }x\in C_i\text{ for some }i\in\{0,1,\ldots,d-1\}.\end{cases}
\end{equation}
The polynomial forms of such functions are of the form $Q=P+b$ where $P\in\IF_q[T]$ is the polynomial form of an index $d$ generalized cyclotomic mapping of $\IF_q$, and $b\in\IF_q$. Applying Algorithm \ref{algo1} to the non-constant part $P=Q-Q(0)$ of such a polynomial $Q$ allows one to check whether the function $f$ represented by $Q$ is of the form (\ref{gengenEq}) and, if so, find such a representation of $f$.

\subsection{More notation and terminology}\label{subsec1P2}

We denote by $\IN$ the set of natural numbers (including $0$), by $\IN^+$ the set of positive integers, and by $\IP$ the set of primes. The identity function on a set $X$ is denoted by $\id_X$. For a prime $p$ and a non-negative integer $n$, the \emph{$p$-adic valuation of $n$} (the largest power of $p$ that divides $n$, defined to be $\infty$ if $n=0$) is denoted by $\nu_p(n)$. If $k$ is another non-negative integer, we set $\nu_p^{(k)}(n):=\min\{\nu_p(n),k\}$. Moreover, if $n>0$, we set $n_p:=p^{\nu_p(n)}$, the so-called \emph{$p$-part of $n$}, and $n_{p'}:=\frac{n}{n_p}$, the \emph{$p'$-part of $n$}.

The \emph{order} of an element $g$ of a finite group $G$ (i.e., the smallest positive integer $o$ such that $g^o=1_G$) is denoted by $\ord(g)$. If $u$ is a unit in a ring $R$, then $u$ belongs to each of the groups $(R,+)$ and $(R^{\ast},\cdot)$. In that case, $\ord(u)$ will always denote the multiplicative order of $u$ (i.e., its order in $(R^{\ast},\cdot)$), and we write $\aord(u)$ for the additive order of $u$ (its order in $(R,+)$). In this paper, this notation is applied for $R=\IZ/m\IZ$, in Section \ref{sec3}.

For a given ring $R$, we can identify integers with elements of $R$, by identifying a positive integer $n$ with the sum of $n$ copies of the ring element $1$, a negative integer $n$ with a sum of $-n$ copies of the ring element $-1$, and the integer $0$ with the ring element $0$. That way, a function definition like $x\mapsto ax+b$ where $a$ and $b$ are integers makes sense over any ring.

\section{Proofs of the two main theorems}\label{sec2}

\subsection{Proof of Theorem \ref{wreathIsoTheo}}\label{subsec2P1}

Recall that an element $(\psi,(\lambda(s_0,b_0),\lambda(s_1,b_1),\ldots,\lambda(s_{d-1},b_{d-1})))$ of the imprimitive permutational wreath product $\Hol(C)\wr_{\imp}\Sym(d)$ maps a point $(c,i)$, where $c\in C$ and $i\in\{0,1,\ldots,d-1\}$, to
\[
(c^{\lambda(s_{\psi(i)},b_{\psi(i)})},\psi(i))=(b_{\psi(i)}c^{s_{\psi(i)}},\psi(i)).
\]
We need to check that the specified image of this wreath product element under $\iota_{\omega}$,
\[
f_{\omega}((\omega^{\psi(i)-is_{\psi(i)}}b_{\psi(i)})_{i=0,1,\ldots,d-1},(s_{\psi(i)})_{i=0,1,\ldots,d-1})_{\mid\IF_q^{\ast}},
\]
shows the same mapping behavior under the identification of points $(c,i)$ with nonzero field elements $c\omega^i$ via the bijection $\beta_{\omega}$. And indeed, noting that $c\omega^i$ lies in the $i$-th coset $C_i$, we conclude that
\begin{align*}
&(c\omega^i)^{f_{\omega}((\omega^{\psi(i)-is_{\psi(i)}}b_{\psi(i)})_{i=0,1,\ldots,d-1},(s_{\psi(i)})_{i=0,1,\ldots,d-1})}=(c\omega^i)^{x\mapsto\omega^{\psi(i)-is_{\psi(i)}}b_{\psi(i)}x^{s_{\psi(i)}}} \\
&=\omega^{\psi(i)-is_{\psi(i)}}b_{\psi(i)}\cdot(c\omega^i)^{s_{\psi(i)}}=\omega^{\psi(i)-is_{\psi(i)}}b_{\psi(i)}\cdot c^{s_{\psi(i)}}\omega^{is_{\psi(i)}}=\omega^{\psi(i)}b_{\psi(i)}c^{s_{\psi(i)}} \\
&=\beta_{\omega}^{-1}((b_{\psi(i)}c^{s_{\psi(i)}},\psi(i))).
\end{align*}
This shows that the diagram
\begin{center}
\begin{tikzpicture}
\matrix (m) [matrix of math nodes,row sep=3em,column sep=4em,minimum width=2em]
  {
     \IF_q^{\ast} & \IF_q^{\ast} \\
     C\times\{0,\ldots,d-1\} & C\times\{0,\ldots,d-1\} \\};
  \path[-stealth]
    (m-1-1) edge node[above,shift={(0pt,5pt)}] {$(\psi,(\lambda(s_i,b_i))_{i=0,\ldots,d-1})^{\iota_{\omega}}$} (m-1-2)
    (m-1-1) edge node [left] {$\beta_{\omega}$} (m-2-1)
		(m-1-2) edge node [right] {$\beta_{\omega}$} (m-2-2)
		(m-2-1) edge node [below,shift={(0pt,-5pt)}] {$(\psi,(\lambda(s_i,b_i))_{i=0,\ldots,d-1})$} (m-2-2);
\end{tikzpicture}
\end{center}
is commutative, whence $(\beta_{\omega},\iota_{\omega})$ is an isomorphism of permutation groups. We still need to check the formula for the inverse isomorphism as well as the claims on the two pre-images.

Note that $f_{\omega}(\vec{a},\vec{r})$ maps $c\omega^i$ to
\[
a_i(\omega^ic)^{r_i}=a_i\omega^{ir_i}c^{r_i}=\omega^{\psi(i)}\cdot a_i\omega^{ir_i-\psi(i)}c^{r_i},
\]
which corresponds to the pair
\[
(a_i\omega^{ir_i-\psi(i)}c^{r_i},\psi(i))
\]
under $\beta_{\omega}$. We need to check that the specified image of $f_{\omega}(\vec{a},\vec{r})$ under $\iota_{\omega}^{-1}$, the wreath product element
\[
(\psi,(\lambda(r_{\psi^{-1}(i)},\omega^{r_{\psi^{-1}(i)}\psi^{-1}(i)-i}a_{\psi^{-1}(i)}))_{i=0,1,\ldots,d-1}),
\]
shows the same mapping behavior on pairs $(c,i)$. And indeed,
\begin{align*}
&(c,i)^{(\psi,(\lambda(r_{\psi^{-1}(i)},\omega^{r_{\psi^{-1}(i)}\psi^{-1}(i)-i}a_{\psi^{-1}(i)}))_{i=0,1,\ldots,d-1})} = \\
&(c^{\lambda(r_{\psi^{-1}(\psi(i))},\omega^{r_{\psi^{-1}(\psi(i))}\psi^{-1}(\psi(i))-\psi(i)}a_{\psi^{-1}(\psi(i))})},\psi(i)) = \\
&(c^{\lambda(r_i,\omega^{r_ii-\psi(i)}a_i)},\psi(i))=(a_i\omega^{r_ii-\psi(i)}c^{r_i},\psi(i)),
\end{align*}
as required.

We now turn to the two claims on pre-images. Fix an integer $s$ that is coprime to $|C|=\frac{q-1}{d}$. Then for all $\psi\in\Sym(d)$ and all $b_0,b_1,\ldots,b_{d-1}\in C$, we have
\[
(\psi,(\lambda(s,b_i))_{i=0,1,\ldots,d-1})^{\iota_{\omega}}=f_{\omega}((\omega^{\psi(i)-is}b_{\psi(i)})_{i=0,1,\ldots,d-1},(s)_{i=0,1,\ldots,d-1})_{\mid\IF_q^{\ast}},
\]
which is certainly (the restriction to $\IF_q^{\ast}$ of) an $s$-th order cyclotomic mapping of $\IF_q$ of index $d$. Since we know that $\iota_{\omega}$ maps into $\GCP(d,q)$, it must actually be an $s$-th order cyclotomic permutation of $\IF_q$ of index $d$. Moreover, $\iota_{\omega}$ is injective, so we get $d!\cdot|C|^d$ distinct permutations that way, and this is just the number of $s$-th order cyclotomic permutations of $\IF_q$ of index $d$.

Hence, we conclude that for each fixed $s$, the isomorphism $\iota_{\omega}$ maps the wreath product elements $(\psi,(\lambda(s,b_i))_{i=0,1,\ldots,d-1})$ bijectively to the (restrictions to $\IF_q^{\ast}$ of) $s$-th order cyclotomic permutations of $\IF_q$. This implies both claims on pre-images and concludes the proof of Theorem \ref{wreathIsoTheo}.

\subsection{Proof of Theorem \ref{cycloPolyTheo}}\label{subsec2P2}

We explained right after Theorem \ref{cycloPolyTheo} how statement (1) can be obtained from the proof of \cite[Formula (3)]{Wan13a}. Hence, we will only prove statement (2) here.

Note that $P$ is the polynomial form of an index $d$ generalized cyclotomic mapping of $\IF_q$ if and only if it is of the form (\ref{wangEq}) for some $a_0,\ldots,a_{d-1}\in\IF_q$ and some $r_0,\ldots,r_{d-1}\in\{1,2,\ldots,\frac{q-1}{d}\}$. Such a polynomial always has vanishing constant term, and so if the constant term of $P$ is not $0$, it cannot be the polynomial form an index $d$ generalized cyclotomic mapping of $\IF_q$.

What we still need to show is that under the assumption $P(0)=0$, steps 2--11 of Algorithm \ref{algo1} verify that $P$ is of the form (\ref{wangEq}) and, if so, find $a_i\in\IF_q$ and $r_i\in\{1,\ldots,\frac{q-1}{d}\}$ such that $P$ is of the form (\ref{wangEq}); those $a_i$ and $r_i$ then are the parameters for the $\omega$-cyclotomic form $f_{\omega}(\vec{a},\vec{r})$ of the function $\IF_q\rightarrow\IF_q$ with polynomial form $P$.

The simple case $P=0$ is dealt with in step 2, so we assume henceforth that $P\not=0$. We will go through steps 3--11 of Algorithm \ref{algo1}, assuming that $P$ is of the form (\ref{wangEq}) for some $a_0,a_1,\ldots,a_{d-1}\in\IF_q$, not all zero, and some $r_0,r_1,\ldots,r_{d-1}\in\{1,2,\ldots,\frac{q-1}{d}\}$. Moreover, we may assume without loss of generality that
\begin{equation}\label{technicalAssumptionEq}
r_i=\min\{r_j: j\in\{0,\ldots,d-1\},a_j\not=0\}\text{ whenever }a_i=0.
\end{equation}

Before we start tracing the steps of the algorithm, we make some important definitions. We denote by $\rho'_0<\rho'_1<\cdots<\rho'_{k'-1}$ the distinct values of the $r_i$. For $\ell'=0,1,\ldots,k'-1$, we set
\[
S'_{\ell'}:=\{i\in\{0,1,\ldots,d-1\}:r_i=\rho'_i\}.
\]
Note that the sets $S'_{\ell'}$ form a partition of the set $\{0,1,\ldots,d-1\}$. For $i=0,1,\ldots,d-1$ and $\ell'=0,1,\ldots,k'-1$, we set
\[
{b'_i}^{(\ell')}:=\begin{cases}a_i, & \text{if }i\in S'_{\ell'}, \\ 0, & \text{otherwise}.\end{cases}
\]
By our assumption that $P$ is of the form (\ref{wangEq}), we have that for $\ell'=0,1,\ldots,k'-1$ and $j=0,1,\ldots,d-1$, the coefficient of $T^{j\cdot\frac{q-1}{d}+\rho_{\ell'}}$ in $P$ is
\begin{equation}\label{coeffEq}
\frac{1}{d}\sum_{i\in S'_{\ell'}}{\zeta^{-ij}a_i}=\frac{1}{d}\sum_{i=0}^{d-1}{\zeta^{-ij}{b'_i}^{(\ell')}}.
\end{equation}
For $\ell'=0,1,\ldots,k'-1$, let $\vec{v'}_{\ell'}$ be the length $d$ column vector over $\IF_q$ whose $j$-th entry, for $j=0,1,\ldots,d-1$, is the coefficient of $T^{j\cdot\frac{q-1}{d}+\rho_{\ell'}}$ in $P$.

By Formula (\ref{coeffEq}), we get that
\[
\vec{v'}_{\ell'}=\frac{1}{d}V(1,\zeta^{-1},\zeta^{-2},\ldots,\zeta^{-(d-1)})\vec{b'}_{\ell'},
\]
where $\vec{b'}_{\ell'}$ is the length $d$ column vector over $\IF_q$ whose $i$-th entry, for $i=0,1,\ldots,d-1$, is ${b'_i}^{(\ell')}$. Since $\vec{b'}_{\ell'}$ is not the zero vector (by our technical assumption (\ref{technicalAssumptionEq}), it contains at least one entry that is equal to one of the nonzero $a_i$) and the displayed Vandermonde matrix is invertible, we conclude that $\vec{v'}_{\ell'}$ is not the zero vector. This means that for each value $\rho'_{\ell'}$ that occurs among the $r_i$, there is a term in $P$ of degree $j\cdot\frac{q-1}{d}+\rho'_{\ell'}$, for a suitable $j\in\{0,1,\ldots,d-1\}$. Therefore, each $r_i$ occurs as the (non-standard) remainder of integer division by $\frac{q-1}{d}$ of a suitable term degree in $P$.

We now start to go through the steps of Algorithm \ref{algo1}:
\begin{itemize}
\item In step 3, note that if $P$ is of the assumed form, it has at most $d^2$ terms, so should the input have more terms than that, it cannot be the polynomial form of an index $d$ generalized cyclotomic mapping of $\IF_q$.
\item In step 4, by what we said above, each $\rho'_{\ell'}$ occurs as a value among $\rho_1,\ldots,\rho_k$. But also, by the form of $P$, it is clear that the value of each $\rho_{\ell}$ is one of the $r_i$, hence one of the $\rho_{\ell'}$. This shows that $k'=k$ and $\rho'_{\ell}=\rho_{\ell}$ for $\ell=0,1,\ldots,k-1$. Because there are at most $d$ distinct values of the $r_i$, and those distinct values are just the $\rho_{\ell}$, we conclude that the number $k$ of the $\rho_{\ell}$ cannot exceed $d$, unless $P$ is not of the form (\ref{wangEq}).
\item In step 5, since we now know that the $\rho'_{\ell'}$ and the $\rho_{\ell}$ are one and the same, we can infer that $\vec{v}_{\ell}=\vec{v'}_{\ell}$ for each $\ell=0,1,\ldots,k-1$.
\item In step 6, this yields that $\vec{b}_{\ell}=\vec{b'}_{\ell}$ for $\ell=0,1,\ldots,d-1$.
\item In step 7, by the definition of the set $S_{\ell}$ and our technical assumption (\ref{technicalAssumptionEq}), we conclude that $S_{\ell}=S'_{\ell}$ for $\ell=0,1,\ldots,d-1$.
\item In step 8, since $S_{\ell}=S'_{\ell}$ and the sets $S'_{\ell}$ form a partition of $\{0,1,\ldots,d-1\}$, we conclude that the sets $S_{\ell}$ must form a partition of $\{0,1,\ldots,d-1\}$ -- if they do not, then $P$ is not of the form (\ref{wangEq}).
\item In step 9, using the definition of the vectors $\vec{b'}_{\ell'}$, our technical assumption (\ref{technicalAssumptionEq}), and the fact that $\vec{b}_{\ell}=\vec{b'}_{\ell}$, we can read off the $a_i$ and $r_i$ as described, and the associated index $d$ generalized cyclotomic mapping of $\IF_q$ has the polynomial form $P$, whence the algorithm provides the correct output in step 11.
\end{itemize}

\section{Cycle types, cycle indices, and P{\'o}lya's Theorem}\label{secPol}

As an application, we will use Theorem \ref{wreathIsoTheo} to characterize the cycle types of index $d$ generalized cyclotomic permutations of $\IF_q$. More precisely, using Theorem \ref{wreathIsoTheo} together with results of P{\'o}lya, \cite[table at the bottom of p.~180]{Pol37a}, and Wei-Gao-Yang as well as Wei-Xu, \cite[Theorems 1.2, 1.3 and 2.4]{WX93a}, allows one to determine the cycle index of the permutation group $\GCP(d,q)$ (of restrictions to $\IF_q^{\ast}$ of index $d$ generalized cyclotomic permutations of $\IF_q$). In this section, we prepare the ground by reviewing the concepts of \emph{cycle type} and \emph{cycle index} as well as P{\'o}lya's Theorem. In the next section, we will discuss Wei-Gao-Yang's and Wei-Xu's results in detail.

Let $G\leq\Sym(\Omega)$ be a permutation group on a finite set $\Omega$ of size $n$. Intuitively, for a given $g\in G$, the \enquote{cycle type of $g$} should be the information how many cycles of each length $g$ has. Formally, this is commonly defined as the multivariate monomial
\[
\CT(g):=x_1^{k_1}x_2^{k_2}\cdots x_n^{k_n}\in\IQ[x_1,x_2,\ldots,x_n]
\]
where $k_i$ is the number of cycles of length $i$ of $g$, for $i=1,2,\ldots,n$. For example, the cycle type of the permutation $(0)(1)(2)(3,4)(5,6)(7,8,9)\in\Sym(10)$ is $x_1^3x_2^2x_3$. The \emph{cycle index of $G$}, which we will denote by $\CI(G)$, is defined as the average of all the cycle types in $G$:
\[
\CI(G):=\frac{1}{|G|}\sum_{g\in G}{\CT(g)}.
\]
For example, the symmetric group $\Sym(3)$ contains $1$ identity element, $3$ transpositions, and $2$ cycles of length $3$, leading to
\[
\CI(\Sym(3))=\frac{1}{6}(x_1^3+3x_1x_2+2x_3)=\frac{1}{6}x_1^3+\frac{1}{2}x_1x_2+\frac{1}{3}x_3.
\]
In general, the cycle index of a symmetric group is given by the following formula, which will be used in Subsection \ref{subsec5P1}:

\begin{proposition}\label{ciSymProp}
Let $d$ be a positive integer, and set
\[
\Lambda_d:=\{(\lambda_1,\lambda_2,\ldots,\lambda_d)\in\IN^d: \sum_{i=1}^d{i\lambda_i}=d\}.
\]
Then
\[
\CI(\Sym(d))=\frac{1}{d!}\sum_{(\lambda_1,\ldots,\lambda_d)\in\Lambda_d}{\frac{d!}{\prod_{i=1}^d{i^{\lambda_i}\lambda_i!}}x_1^{\lambda_1}x_2^{\lambda_2}\cdots x_d^{\lambda_d}}.
\]
\end{proposition}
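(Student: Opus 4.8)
The plan is to compute the defining average $\CI(\Sym(d)) = \frac{1}{d!}\sum_{g\in\Sym(d)}\CT(g)$ by grouping the summands according to cycle type. Since $\CT(g)$ depends only on the multiset of cycle lengths of $g$, and the possible such multisets for permutations of a $d$-element set are precisely encoded by the tuples $(\lambda_1,\ldots,\lambda_d)\in\Lambda_d$ (here $\lambda_i$ is the number of cycles of length $i$, and $\sum_{i=1}^d i\lambda_i = d$ records that the cycles partition the $d$ points), one can rewrite
\[
\sum_{g\in\Sym(d)}\CT(g) = \sum_{(\lambda_1,\ldots,\lambda_d)\in\Lambda_d} N(\lambda_1,\ldots,\lambda_d)\; x_1^{\lambda_1}x_2^{\lambda_2}\cdots x_d^{\lambda_d},
\]
where $N(\lambda_1,\ldots,\lambda_d)$ denotes the number of $g\in\Sym(d)$ having exactly $\lambda_i$ cycles of length $i$ for each $i$. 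The proposition then reduces to the classical identity $N(\lambda_1,\ldots,\lambda_d) = \frac{d!}{\prod_{i=1}^d i^{\lambda_i}\lambda_i!}$.

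To prove that counting identity I would use the standard \emph{filling the cycle slots} argument. Fix $(\lambda_1,\ldots,\lambda_d)\in\Lambda_d$ and set up a template consisting of $\lambda_i$ empty cycles of length $i$ for each $i$, for a total of $\sum_{i=1}^d i\lambda_i = d$ slots arranged in a fixed reading order. Each of the $d!$ ways of writing the points $0,1,\ldots,d-1$ into these slots determines a permutation of the prescribed cycle type, and every permutation of that type is obtained from at least one such filling. The map from fillings to permutations is not injective, but its fibers are easy to describe: two fillings yield the same permutation exactly when they differ by a combination of (i) cyclically rotating the entries within individual cycles, contributing a factor $i$ for each cycle of length $i$ and hence $\prod_{i=1}^d i^{\lambda_i}$ overall, and (ii) permuting among themselves the $\lambda_i$ cycles that have the same length $i$, contributing $\prod_{i=1}^d \lambda_i!$ overall. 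Thus every permutation of the given cycle type arises from exactly $\prod_{i=1}^d i^{\lambda_i}\lambda_i!$ fillings, which gives the stated value of $N$.

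Substituting this into the grouped sum and dividing by $d!$ yields precisely the claimed formula for $\CI(\Sym(d))$. The only real content is the counting identity, and within it the one point deserving care is the verification that the rotation-and-relabeling symmetries listed in (i) and (ii) are \emph{exactly} the fibers of the filling-to-permutation map — that these symmetries genuinely do preserve the resulting permutation, and conversely that no two fillings lying in different orbits can collide. Once that is checked, the rest is bookkeeping: reindexing the sum over $\Lambda_d$ and clearing the $d!$ in front to match the displayed expression.
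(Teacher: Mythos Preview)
Your argument is correct and is the standard proof of this classical identity. Note that the paper does not actually prove Proposition~\ref{ciSymProp}: it is stated as a well-known formula for the cycle index of the symmetric group and used without proof, so there is no ``paper's own proof'' to compare against.
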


Cycle indices are not only compact ways of displaying the information how many elements of a given cycle type a certain permutation group has -- they are a central tool in P{\'o}lya's celebrated enumeration theorem, used to count orbits of \enquote{colored configurations} under a permutation group, see \cite[Chapter 9]{Tuc12a}. Computing the cycle index of a given permutation group is an important problem and has been studied for various groups by different authors, see for example \cite{Fri97a,Ful99a,WX93a}.

P{\'o}lya himself described a way to obtain the cycle index of an imprimitive permutational wreath product $G\wr_{\imp}P$ from the cycle indices of $G$ and $P$:

\begin{theorem}\label{polyaTheo}(P{\'o}lya, \cite[table at the bottom of p.~180]{Pol37a})
Let $G\leq\Sym(\Omega)$ and $P\leq\Sym(d)$ be permutation groups of finite degree. For each positive integer $m$, set
\[
\CI^{(m)}(G):=\CI(G)(x_m,x_{2m},\ldots,x_{|\Omega|m}).
\]
Then
\[
\CI(G\wr_{\imp}P)=\CI(P)(\CI^{(1)}(G),\CI^{(2)}(G),\ldots,\CI^{(d)}(G)).
\]
\end{theorem}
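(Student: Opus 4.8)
The plan is to compute $\CT((\sigma,\vec{g}))$ for a single element $(\sigma,\vec{g})\in G\wr_{\imp}P$ in terms of $\sigma$ and the coordinates $g_0,\ldots,g_{d-1}$, and then to average over the whole group. The decisive structural fact is that the cycle type of $(\sigma,\vec{g})$ on $\Omega\times\{0,1,\ldots,d-1\}$ is controlled block-cycle by block-cycle: the copies $\Omega\times\{i\}$, for $i$ ranging over a single $m$-cycle of $\sigma$, form a $(\sigma,\vec{g})$-invariant set of size $m|\Omega|$, and its cycle structure is governed by a single \enquote{cycle product} $h\in G$ obtained by composing the relevant $g_i$ along the cycle.

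First I would prove this structural lemma. Fix an $m$-cycle $(i_0\,i_1\,\cdots\,i_{m-1})$ of $\sigma$, so that $\sigma(i_t)=i_{t+1}$ with indices read modulo $m$. Using the action rule $(\omega,i)\mapsto(g_{\sigma(i)}(\omega),\sigma(i))$ from Definition \ref{impWreathDef}(2), I would trace the orbit of a point $(\omega,i_0)$: applying $(\sigma,\vec{g})$ cycles the second coordinate through $i_1,i_2,\ldots$ and returns it to $i_0$ after exactly $m$ steps, at which point the first coordinate has been acted on by the composite $h:=g_{i_0}\circ g_{i_{m-1}}\circ\cdots\circ g_{i_1}\in G$. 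Hence $(\sigma,\vec{g})^m$ restricted to $\Omega\times\{i_0\}$ is the copy of $h$, and a short argument shows that each $\ell$-cycle of $h$ on $\Omega$ lifts to exactly one cycle of $(\sigma,\vec{g})$ of length $m\ell$ inside the invariant set (these lifted cycles exhaust the $m|\Omega|$ points, since the $\ell$-cycles of $h$ partition $\Omega$). Because changing the starting index $i_0$ replaces $h$ by a conjugate, $\CT(h)$ — and therefore the contribution of this block-cycle — is well defined; in monomial terms that contribution is $\CT(h)(x_m,x_{2m},\ldots)$, i.e. $\CT(h)$ with each $x_\ell$ replaced by $x_{m\ell}$. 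Multiplying over all cycles of $\sigma$ gives
\[
\CT((\sigma,\vec{g}))=\prod_{\kappa}\CT(h_\kappa)(x_{m_\kappa},x_{2m_\kappa},\ldots),
\]
where $\kappa$ runs over the cycles of $\sigma$, $m_\kappa$ is the length of $\kappa$, and $h_\kappa$ is its cycle product.

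Next I would carry out the averaging. Summing the displayed product over $\vec{g}\in G^d$ factors as a product over the cycles $\kappa$ of $\sigma$, since distinct cycles involve disjoint coordinate sets. For a single $m$-cycle the key counting input is that, as the coordinates $g_{i_0},\ldots,g_{i_{m-1}}$ range freely over $G^m$, the cycle product $h_\kappa$ attains each value of $G$ exactly $|G|^{m-1}$ times (fix all but one factor arbitrarily; the remaining factor is then uniquely determined). Using $\sum_{h\in G}\CT(h)=|G|\,\CI(G)$ and the fact that the substitution $x_\ell\mapsto x_{m\ell}$ commutes with summation, the single-cycle sum equals $|G|^{m-1}\cdot|G|\,\CI^{(m)}(G)=|G|^m\CI^{(m)}(G)$. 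Multiplying over the cycles of $\sigma$ and using $\sum_\kappa m_\kappa=d$ yields $\sum_{\vec{g}}\CT((\sigma,\vec{g}))=|G|^d\prod_m(\CI^{(m)}(G))^{\lambda_m(\sigma)}$, where $\lambda_m(\sigma)$ is the number of $m$-cycles of $\sigma$. Dividing by $|G\wr_{\imp}P|=|P|\,|G|^d$, summing over $\sigma\in P$, and recognizing $\prod_m(\CI^{(m)}(G))^{\lambda_m(\sigma)}$ as $\CT(\sigma)$ evaluated at $x_j=\CI^{(j)}(G)$, I obtain
\[
\CI(G\wr_{\imp}P)=\frac{1}{|P|}\sum_{\sigma\in P}\CT(\sigma)(\CI^{(1)}(G),\ldots,\CI^{(d)}(G))=\CI(P)(\CI^{(1)}(G),\ldots,\CI^{(d)}(G)),
\]
as claimed.

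I expect the main obstacle to be the structural lemma — proving rigorously that each $\ell$-cycle of the cycle product $h$ produces exactly one $(m\ell)$-cycle of $(\sigma,\vec{g})$. This requires care in tracking the return map through the $m$ blocks of a $\sigma$-cycle and in handling the paper's right-action convention; the subsequent counting step, while relying on the uniform-distribution observation for cycle products, is then essentially bookkeeping.
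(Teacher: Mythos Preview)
Your proposal is correct and follows essentially the same approach as the paper: both establish the structural lemma that $\CT((\sigma,\vec{g}))=\prod_{\kappa}\CT^{(m_\kappa)}(h_\kappa)$ (this is the paper's Lemma~\ref{polyaLem}, with $h_\kappa$ the forward cycle product) and then average using the observation that cycle products are uniformly distributed over $G$. The only difference is presentational: the paper frames the averaging probabilistically, partitioning $G\wr_{\imp}P$ first by the cycle type of $\psi$ and then by the tuple of cycle types of the forward cycle products, whereas you sum directly over $\vec{g}$ for fixed $\sigma$ and factor over cycles---your route is slightly more streamlined but relies on the same counting input.
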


\begin{example}\label{polyaEx}
We compute the cycle index of the imprimitive permutational wreath product $\Sym(3)\wr_{\imp}\Sym(3)$. By the above discussion, the cycle index of $\Sym(3)$ is
\[
P(x_1,x_2,x_3):=\frac{1}{6}x_1^3+\frac{1}{2}x_1x_2+\frac{1}{3}x_3\in\IQ[x_1,x_2,x_3].
\]
According to Theorem \ref{polyaTheo},
\begin{align*}
&\CI(\Sym(3)\wr_{\imp}\Sym(3))= \\
&P(P(x_1,x_2,x_3),P(x_2,x_4,x_6),P(x_3,x_6,x_9))= \\
&\frac{1}{6}P(x_1,x_2,x_3)^3+\frac{1}{2}P(x_1,x_2,x_3)P(x_2,x_4,x_6)+\frac{1}{3}P(x_3,x_6,x_9).
\end{align*}
Using any computer algebra software (the authors used GAP \cite{GAP4} for it), one can evaluate this last expression to get that the cycle index of $\Sym(3)\wr_{\imp}\Sym(3)$ is
\begin{align*}
&\frac{1}{1296}x_1^9+\frac{1}{144}x_1^7x_2+\frac{1}{216}x_1^6x_3+\frac{1}{48}x_1^5x_2^2+\frac{1}{36}x_1^4x_2x_3+\frac{5}{144}x_1^3x_2^3+\frac{1}{24}x_1^3x_2x_4+ \\
&\frac{1}{108}x_1^3x_3^2+\frac{1}{24}x_1^2x_2^2x_3+\frac{1}{24}x_1x_2^4+\frac{1}{36}x_1^3x_6+\frac{1}{8}x_1x_2^2x_4+\frac{1}{36}x_1x_2x_3^2+\frac{1}{36}x_2^3x_3+ \\
&\frac{1}{12}x_1x_2x_6+\frac{1}{12}x_2x_3x_4+\frac{5}{81}x_3^3+\frac{2}{9}x_3x_6+\frac{1}{9}x_9.
\end{align*}
\end{example}

We now discuss a proof of P{\'o}lya's result that is based on a more detailed lemma, which will be used in Subsection \ref{subsec5P2}.

\begin{definition}\label{polyaDef}
Let $G\leq\Sym(\Omega)$ be a permutation group on the finite set $\Omega$.
\begin{enumerate}
\item For $g\in G$ and a positive integer $m$, we define
\[
\CT^{(m)}(g):=\CT(g)(x_m,x_{2m},\ldots,x_{|\Omega|m}).
\]
\item Let $d$ be a positive integer, let $\psi\in\Sym(d)$, and let $g_0,g_1,\ldots,g_{d-1}\in G$. Consider the element $g=(\psi,(g_0,g_1,\ldots,g_{d-1}))$ of the imprimitive permutational wreath product $G\wr_{\imp}\Sym(d)$. For each cycle $\zeta=(i_0,i_1,\ldots,i_{\ell-1})$ of $\psi$, say written such that $i_0=\min\{i_0,i_1,\ldots,i_{\ell-1}\}$, we define
\[
\fcp_{\zeta}(g):=g_{i_0}g_{i_1}\cdots g_{i_{\ell-1}},
\]
the so-called \emph{forward cycle product of $g$ with respect to $\zeta$}. Moreover, we denote by $\ell(\zeta):=\ell$ the \emph{length of $\zeta$}.
\end{enumerate}
\end{definition}

\begin{lemma}\label{polyaLem}
Let $G\leq\Sym(\Omega)$ be a permutation group on a finite set $\Omega$, let $d$ be a positive integer, let $\psi\in\Sym(d)$, and let $g_0,g_1,\ldots,g_{d-1}\in G$. Consider the element $g=(\psi,(g_0,g_1,\ldots,g_{d-1}))$ of $G\wr_{\imp}\Sym(d)$. Then
\[
\CT(g)=\prod_{\zeta}{\CT^{(\ell(\zeta))}(\fcp_{\zeta}(g))},
\]
where the running index $\zeta$ ranges over the cycles of $\psi$.
\end{lemma}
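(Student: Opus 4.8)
The plan is to analyze the action of $g = (\psi, (g_0, \ldots, g_{d-1}))$ on the set $\Omega \times \{0, 1, \ldots, d-1\}$ by decomposing that set according to the cycles of $\psi$. First I would observe that since a point $(\omega, i)$ is sent to $(g_{\psi(i)}(\omega), \psi(i))$ (using the permutational form from Definition \ref{impWreathDef}(2), noting that the second coordinate moves exactly as $\psi$ acts on $\{0,\ldots,d-1\}$), the subset $\Omega \times \supp(\zeta)$ attached to a single cycle $\zeta = (i_0, i_1, \ldots, i_{\ell-1})$ of $\psi$ is $g$-invariant. Hence $\CT(g)$ is the product, over the cycles $\zeta$ of $\psi$, of the cycle type of the restriction $g_{\mid \Omega \times \supp(\zeta)}$, and it suffices to prove the one-cycle case: if $\psi$ is itself an $\ell$-cycle $(i_0, \ldots, i_{\ell-1})$ with $i_0$ minimal, then $\CT(g) = \CT^{(\ell)}(\fcp_\zeta(g))$ where $\fcp_\zeta(g) = g_{i_0} g_{i_1} \cdots g_{i_{\ell-1}}$.

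For the one-cycle case, the key computation is to iterate $g$ and track how a point returns to its starting copy. Starting from $(\omega, i_0)$, applying $g$ once lands in copy $i_1$ (recall $\psi(i_0) = i_1$ in the right-action convention of this paper — I would state the indexing carefully to match the convention fixed in Section \ref{sec1}), then copy $i_2$, and after $\ell$ steps we return to copy $i_0$. Chasing the first coordinate through these $\ell$ steps shows that $g^\ell$ restricted to $\Omega \times \{i_0\}$ acts as $\fcp_\zeta(g) = g_{i_0} g_{i_1} \cdots g_{i_{\ell-1}}$ applied to the first coordinate (with the factors appearing in exactly the order dictated by walking along the cycle starting at the minimal element $i_0$; the ``minimum'' normalization just fixes one representative and does not affect the conjugacy class of the cycle product, hence not $\CT$). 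Consequently, a cycle of $\fcp_\zeta(g)$ on $\Omega$ of length $m$ ``lifts'' to a single cycle of $g$ on $\Omega \times \supp(\zeta)$ of length $\ell m$: the orbit under $g$ of a point in such a lifted cycle visits each of the $\ell$ copies once per $g^\ell$-period and closes up after exactly $\ell m$ applications of $g$. This sets up a bijection between the length-$m$ cycles of $\fcp_\zeta(g)$ and the length-$\ell m$ cycles of $g$ on $\Omega \times \supp(\zeta)$, which is precisely the substitution $x_m \mapsto x_{\ell m}$ encoded by $\CT^{(\ell)}$.

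The main obstacle I anticipate is bookkeeping rather than conceptual: getting the order of the factors in $\fcp_\zeta(g)$ right under the paper's right-action convention (where $\sigma\psi$ means ``first $\sigma$, then $\psi$''), and verifying cleanly that replacing $\fcp_\zeta(g)$ by any cyclic rotation of its factors (which is what choosing a different starting index in the cycle would do) leaves $\CT^{(\ell)}(\fcp_\zeta(g))$ unchanged — this follows because cyclic rotations of a product $g_{i_0}\cdots g_{i_{\ell-1}}$ are all conjugate in $G$, hence have the same cycle type. Once the single-cycle identity $\CT(g) = \CT^{(\ell)}(\fcp_\zeta(g))$ is established, the general statement follows immediately by multiplying over the cycles of $\psi$, using that cycle types of permutations on disjoint invariant sets multiply. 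I would also remark that summing this identity over all $g$ in $G \wr_{\imp} P$ and averaging recovers P\'olya's Theorem \ref{polyaTheo}, which is the intended application.
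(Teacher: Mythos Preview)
Your proposal is correct and follows essentially the same strategy as the paper's proof: decompose $\Omega\times\{0,\ldots,d-1\}$ into $g$-invariant blocks $\Omega\times\supp(\zeta)$ indexed by the cycles $\zeta$ of $\psi$, reduce to the single-cycle case, and then identify the restriction of $g^{\ell(\zeta)}$ to one copy of $\Omega$ with (a conjugate of) the forward cycle product. The only cosmetic difference is that the paper carries out the power $(g^{(\zeta)})^{\ell(\zeta)}$ as an explicit semidirect-product computation and reads off the entry in copy $i_{\ell(\zeta)-1}$, where the product comes out \emph{exactly} as $g_{i_0}g_{i_1}\cdots g_{i_{\ell(\zeta)-1}}=\fcp_\zeta(g)$; your point-chasing from copy $i_0$ instead yields the cyclic rotation $g_{i_1}\cdots g_{i_{\ell-1}}g_{i_0}$, but you already note that cyclic rotations are conjugate and hence share the same cycle type, so the conclusion is unaffected.
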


\begin{proof}
For a given cycle $\zeta=(i_0,\ldots,i_{\ell(\zeta)-1})$ of $\psi$, written such that
\[
i_0=\min\{i_0,i_1,\ldots,i_{\ell(\zeta)-1}\},
\]
set
\[
U(\zeta):=\bigcup_{j=0}^{\ell(\zeta)-1}{G\times\{i_j\}};
\]
intuitively, $U(\zeta)$ is the (disjoint) union of the copies of $G$ corresponding to the indices on $\zeta$. We note that $g$ restricts to a permutation $g^{(\zeta)}$ on each subset $U(\zeta)$ of $G\times\{0,1,\ldots,d-1\}$. Therefore, for each possible cycle length $\ell\in\{1,2,\ldots,d|\Omega|\}$ of $g$, the number of length $\ell$ cycles of $g$ equals the sum of the numbers of length $\ell$ cycles of the various $g^{(\zeta)}$. In terms of polynomial algebra, this translates to
\[
\CT(g)=\prod_{\zeta}{\CT(g^{(\zeta)})},
\]
so that it suffices to show that for each $\zeta$, we have
\begin{equation}\label{toShowEq}
\CT(g^{(\zeta)})=\CT^{(\ell(\zeta))}(\fcp_{\zeta}(g)).
\end{equation}
Now, note that $g^{(\zeta)}\in\Sym(U(\zeta))$ permutes the $\ell(\zeta)$ copies of $G$ in $U(\zeta)$ cyclically. In particular, every cycle length of $g^{(\zeta)}$ is divisible by $\ell(\zeta)$, and when $g$ is raised to the $\ell(\zeta)$-th power, each cycle of $g$, say of length $k$, splits into $\ell(\zeta)$ cycles of length $\frac{k}{\ell(\zeta)}$, one in each copy of $G$. Therefore, the restrictions of $g^{(\zeta)}$ to the various copies of $G$ in $U(\zeta)$ all have the same cycle type, and without loss of generality, we say
\begin{equation}\label{auxiliaryEq}
\CT(g^{(\zeta)})=\CT^{(\ell(\zeta))}(g^{\ell(\zeta)}_{\mid(G\times\{i_{\ell(\zeta)-1}\})}),
\end{equation}
Next we find which element of $G$ equals the restriction of $g^{\ell(\zeta)}$ (or, equivalently, of $(g^{(\zeta)})^{\ell(\zeta)}$) to $G\times\{i_{\ell(\zeta)-1}\}$. First we note that the restriction $g^{(\zeta)}$ of $g$ to $U(\zeta)$ can be viewed as an element of the imprimitive permutational wreath product
\[
G\wr_{\imp}\Sym(\{i_0,i_1,\ldots,i_{\ell(\zeta)-1}\}).
\]
This allows us to write (for ease of notation)
\[
g^{(\zeta)}=((i_0,i_1,\ldots,i_{\ell(\zeta)-1}),(g_{i_0},g_{i_1},\ldots,g_{i_{\ell(\zeta)-1}})).
\]
Set
\[
\psi':=(i_0,i_1,\ldots,i_{\ell(\zeta)-1})
\]
and
\[
\vec{g}:=(g_{i_0},g_{i_1},\ldots,g_{i_{\ell(\zeta)-1}}),
\]
so that $g^{(\zeta)}=\psi'\vec{g}$. Then
\begin{align*}
(g^{(\zeta)})^{\ell(\zeta)}&=(\psi'\vec{g})^{\ell(\zeta)}=\psi'\vec{g}\psi'\vec{g}\cdots\psi'\vec{g}=\psi'^{\ell(\zeta)}(\vec{g})^{\psi'^{\ell(\zeta)-1}}(\vec{g})^{\psi'^{\ell(\zeta)-2}}\cdots(\vec{g})^{\psi'}\vec{g} \\
&=(\vec{g})^{\psi'^{\ell(\zeta)-1}}(\vec{g})^{\psi'^{\ell(\zeta)-2}}\cdots(\vec{g})^{\psi'}\vec{g} \\
&=(g_{i_1},g_{i_2},\ldots,g_{i_{\ell(\zeta)-1}},g_{i_0})\cdot(g_{i_2},g_{i_3},\ldots,g_{i_0},g_{i_1})\cdots \\
&(g_{i_{\ell(\zeta)-1}},g_{i_0},\ldots,g_{i_{\ell(\zeta)-3}},g_{i_{\ell(\zeta)-2}})(g_{i_0},g_{i_1},\ldots,g_{i_{\ell(\zeta)-2}},g_{i_{\ell(\zeta)-1}}),
\end{align*}
which is an element of $G^{\ell(\zeta)}$, and the last entry of it (which equals the restriction of $g^{\ell(\zeta)}$ to $G\times\{i_{\ell(\zeta)-1}\})$ is
\[
g_{i_0}g_{i_1}\cdots g_{i_{\ell(\zeta)-2}}g_{i_{\ell(\zeta)-1}}=\fcp_{\zeta}(g).
\]
Hence
\[
g^{\ell(\zeta)}_{\mid(G\times\{i_{\ell(\zeta)-1}\})}=\fcp_{\zeta}(g),
\]
so that formula (\ref{toShowEq}) is clear by formula (\ref{auxiliaryEq}), and the proof is complete.
\end{proof}

\begin{proof}[Proof of Theorem \ref{polyaTheo}]
For this proof, it will be convenient to generalize the notion of cycle index as follows: If $X\subseteq\Sym(\Omega)$ is a \emph{nonempty} set of permutations on the finite set $\Omega$, then $\CI(X):=\frac{1}{|X|}\sum_{g\in X}{\CT(g)}$. Moreover, it will be convenient to set $\CI(\emptyset):=0$ (the precise value of $\CI(\emptyset)$ will not be important, just that it is defined).

Let $\lambda=(\lambda_1,\ldots,\lambda_d)\in\Lambda_d$ (see Proposition \ref{ciSymProp}), and let $\rho_{\lambda}$ be the (possibly zero) coefficient of $x_1^{\lambda_1}x_2^{\lambda_2}\cdots x_d^{\lambda_d}$ in $\CI(P)$. Then $\rho_{\lambda}$ is the proportion of elements $\psi\in P$ with cycle type $x_1^{\lambda_1}\cdots x_d^{\lambda_d}$, and it is also the proportion of elements $(\psi,(g_0,\ldots,g_{d-1}))\in G\wr_{\imp}P$ where $\CT(\psi)=x_1^{\lambda_1}\cdots x_d^{\lambda_d}$. Theorem \ref{polyaTheo} asserts that the cycle index of $G\wr_{\imp}P$ is the sum of the polynomials
\[
\rho_{\lambda}\CI^{(1)}(G)^{\lambda_1}\CI^{(2)}(G)^{\lambda_2}\cdots\CI^{(d)}(G)^{\lambda_d}\in\IQ[x_1,\ldots,x_{d|\Omega|}]
\]
for the various $\lambda\in\Lambda_d$. Set
\[
X_{\lambda}:=\{(\psi,(g_0,\ldots,g_{d-1}))\in G\wr_{\imp}P:\CT(\psi)=x_1^{\lambda_1}\cdots x_d^{\lambda_d}\}.
\]
Then $G\wr_{\imp}P$ is a disjoint union of the sets $X_{\lambda}$ for the various $\lambda\in\Lambda_d$, and consequently,
\[
\CI(G\wr_{\imp}P)=\sum_{\lambda\in\Lambda_d}{\frac{|X_{\lambda}|}{|G\wr_{\imp}P|}\CI(X_{\lambda})}=\sum_{\lambda\in\Lambda_d}{\rho_{\lambda}\CI(X_{\lambda})}.
\]
Hence, in order to prove Theorem \ref{polyaTheo}, it suffices to show that for each $\lambda\in\Lambda_d$, we have
\[
\CI(X_{\lambda})=\CI^{(1)}(G)^{\lambda_1}\CI^{(2)}(G)^{\lambda_2}\cdots\CI^{(d)}(G)^{\lambda_d}.
\]
We show this with a probabilistic argument.

For a given $\psi\in P$ with cycle type $x_1^{\lambda_1}\cdots x_d^{\lambda_d}$, we enumerate the cycles of $\psi$ as follows: For $\ell\in\{1,\ldots,d\}$ and $j\in\{1,\ldots,\lambda_{\ell}\}$,
\[
\zeta_{\ell,j}=(i_0^{(\ell,j)},i_1^{(\ell,j)},\ldots,i_{\ell-1}^{(\ell,j)})
\]
denotes the $j$-th cycle of length $\ell$ of $\psi$, written such that
\[
i_0^{(\ell,j)}=\min\{i_0^{(\ell,j)},i_1^{(\ell,j)},\ldots,i_{\ell-1}^{(\ell,j)}\},
\]
and ordered such that $i_0^{(\ell,1)}<i_0^{(\ell,2)}<\cdots<i_0^{(\ell,\lambda_{\ell})}$. Suppose we draw the element
\[
(\psi,(g_0,\ldots,g_{d-1}))\in X_{\lambda}
\]
uniformly at random. This is the same as drawing $\psi\in P$ with $\CT(\psi)=x_1^{\lambda_1}\cdots x_d^{\lambda_d}$ uniformly at random, and then drawing, independently from $\psi$ and each other, the $g_i\in G$ uniformly at random. It follows that the forward cycle products
\[
\fcp_{\zeta_{l,j}}(g)=g_{i_0^{(\ell,j)}}g_{i_1^{(\ell,j)}}\cdots g_{i_{\ell-1}^{(\ell,j)}}
\]
for $\ell\in\{1,\ldots,d\}$ and $j\in\{1,\ldots,\lambda_{\ell}\}$ are independent and uniform random variables over $G$. Fix a tuple
\[
\vec{\mu}=(\mu_{\ell,j})_{\ell,j}\in\Lambda_{|\Omega|}^{\{1,\ldots,d\}\times\{1,\ldots,\lambda_{\ell}\}},
\]
with each $\mu_{\ell,j}$ written as
\[
\mu_{\ell,j}=(\mu_1^{(\ell,j)},\mu_2^{(\ell,j)},\ldots,\mu_{|\Omega|}^{(\ell,j)}).
\]
Define $X_{\lambda,\vec{\mu}}$ as the set of all $(\psi,(g_0,\ldots,g_{d-1}))\in X_{\lambda}$ such that for all $\ell$ and $j$,
\[
\CT(\fcp_{\zeta_{\ell,j}}(g))=x_1^{\mu_1^{(\ell,j)}}\cdots x_{|\Omega|}^{\mu_{|\Omega|^{(\ell,j)}}}.
\]
Then $X_{\lambda}$ is a disjoint union of the sets $X_{\lambda,\vec{\mu}}$ for the various $\vec{\mu}$, and
\begin{equation}\label{ciLambdaEq}
\CI(X_{\lambda})=\sum_{\vec{\mu}}{\frac{|X_{\lambda,\vec{\mu}}|}{|X_{\lambda}|}\CI(X_{\lambda,\vec{\mu}})}.
\end{equation}
For $\mu=(\mu_1,\ldots,\mu_{|\Omega|})\in\Lambda_{|\Omega|}$, denote by $c(\mu)$ the coefficient of $x_1^{\mu_1}\cdots x_{|\Omega|}^{\mu_{|\Omega|}}$ in $\CI(G)$, which is the proportion of elements in $G$ with cycle type $x_1^{\mu_1}\cdots x_{|\Omega|}^{\mu_{|\Omega|}}$. For given $\ell$ and $j$, since $\fcp_{\zeta_{\ell,j}}(g)$ is a uniform random variable over $G$, the relative probability (given that $\CT(\psi)=x_1^{\lambda_1}\cdots x_d^{\lambda_d}$) that the cycle type of $\fcp_{\zeta_{\ell,j}}(g)$ is $x_1^{\mu_1^{(\ell,j)}}x_2^{\mu_2^{(\ell,j)}}\cdots x_{|\Omega|}^{\mu_{|\Omega|}^{(\ell,j)}}$ is just $c(\mu_{\ell,j})$, and since the $\fcp_{\zeta_{\ell,j}}(g)$ are independent, the relative probability that this holds for all $\ell$ and $j$ is just $\prod_{\ell,j}{c(\mu_{\ell,j})}$. In other words,
\[
\frac{|X_{\lambda,\vec{\mu}}|}{|X_{\lambda}|}=\prod_{\ell,j}{c(\mu_{\ell,j})}.
\]
Moreover, by Lemma \ref{polyaLem}, we find that all elements of $X_{\lambda,\vec{\mu}}$ have the cycle type
\begin{equation}\label{stretchedEq}
\prod_{\ell,j}{x_{\ell}^{\mu_1^{(\ell,j)}}x_{2\ell}^{\mu_2^{(\ell,j)}}\cdots x_{|\Omega|\ell}^{\mu_{|\Omega|}^{(\ell,j)}}}.
\end{equation}
Hence as long as the set $X_{\lambda,\vec{\mu}}$ is nonempty, its cycle index is the polynomial from formula (\ref{stretchedEq}), and otherwise the cycle index is $0$. Note that whenever $\vec{\mu}$ is such that $X_{\lambda,\vec{\mu}}=\emptyset$, then the corresponding summand in formula (\ref{ciLambdaEq}) is $0\cdot\CI(X_{\lambda,\vec{\mu}})$, and so it makes no difference if we replace $\CI(X_{\lambda,\vec{\mu}})$ (which is $0$) in such summands by the polynomial in formula (\ref{stretchedEq}). We conclude that
\begin{align*}
&\CI(X_{\lambda})=\sum_{\vec{\mu}}\prod_{\ell,j}{c(\mu_{\ell,j})x_{\ell}^{\mu_1^{(\ell,j)}}x_{2\ell}^{\mu_2^{(\ell,j)}}\cdots x_{|\Omega|\ell}^{\mu_{|\Omega|}^{(\ell,j)}}} \\
&=\prod_{\ell=1}^d\prod_{j=1}^{\lambda_{\ell}}\sum_{\mu_{\ell,j}\in\Lambda_{|\Omega|}}{c(\mu_{\ell,j})x_{\ell}^{\mu_1^{(\ell,j)}}x_{2\ell}^{\mu_2^{(\ell,j)}}\cdots x_{|\Omega|\ell}^{\mu_{|\Omega|}^{(\ell,j)}}}=\prod_{\ell=1}^d\prod_{j=1}^{\lambda_{\ell}}{\CI^{(\ell)}(G)}=\prod_{\ell=1}^d{\CI^{(\ell)}(G)^{\lambda_{\ell}}},
\end{align*}
which is just what we needed to show.
\end{proof}

\section{Cycle indices of holomorphs of finite cyclic groups}\label{sec3}

In our application of Theorem \ref{polyaTheo}, we have $P=\Sym(d)$, and $G$ is the permutation group $\Hol(C)$ on the index $d$ subgroup $C$ of $\IF_q^{\ast}$, as defined just before Theorem \ref{wreathIsoTheo}. Therefore, we will now discuss cycle types in that permutation group.

We start with some general group-theoretic remarks. For every group $G$, the functions $G\rightarrow G$ of the form $\lambda(\alpha,g):x\mapsto x^{\alpha}g$, where $\alpha$ is a fixed automorphism of $G$ and $g$ is a fixed element of $G$, form a permutation group on $G$ that is known as the \emph{holomorph of $G$}, denoted $\Hol(G)$. It is the internal semidirect product of the automorphism group $\Aut(G)$ and the group of right-translations $x\mapsto xg$ on $G$ (also known as the right-regular representation of $G$ on itself), which is isomorphic to $G$. The elements of $\Hol(G)$ are also called \emph{affine permutations of $G$}. In the case of the index $d$ multiplicative subgroup $C$ of $\IF_q^{\ast}$ (where $d\mid q-1$), automorphisms of $C$ are of the form $x\mapsto x^r$ for some fixed integer $r$ that is coprime to $|C|$. Identifying such an automorphism with the integer $r$, we obtain the notation $\lambda(r,c)$ from the Introduction as a special case.

Since $C$ is cyclic and all cyclic groups of the same order are isomorphic, there is no loss in replacing $C$ by the additive group $\IZ/|C|\IZ$. The goal is thus to obtain a characterization of the cycle types of the elements of $\Hol(\IZ/m\IZ)$ for an arbitrary positive integer $m$. Note that affine permutations of $\IZ/m\IZ$ take the form $x+m\IZ\mapsto ax+b+m\IZ$ where $a$ is an integer that is coprime to $m$ and $b$ is an arbitrary integer; we will also denote this permutation by $\lambda(a,b)$.

When rewritten into this form, it becomes clear that this problem is studied in Wei and Xu's paper \cite{WX93a}, and we now provide an overview of their results. They consider the following notion of a direct product of permutation groups (which is not the same as the one defined by P{\'o}lya in \cite[pp.~177f.]{Pol37a}):

\begin{definition}\label{permProductDef}
Let $G_i\leq\Sym(\Omega_i)$ be a permutation group for $i=0,1,\ldots,s-1$. The \emph{direct product} of these permutation groups, denoted $\prod_{i=0}^{s-1}{G_i}$, is the permutation group on the Cartesian set product $\prod_{i=0}^{s-1}{\Omega_i}$ that is the image of the abstract group direct product $\prod_{i=0}^{s-1}{G_i}$ under the abstract group homomorphism $\varphi:\prod_{i=0}^{s-1}{G_i}\rightarrow\Sym(\prod_{i=0}^{s-1}{\Omega_i})$ such that
\[
(g_i)_{i\in I}^{\varphi((\sigma_i)_{i\in I})}=(g_i^{\sigma_i})_{i\in I}.
\]
\end{definition}

Just as P{\'o}lya's Theorem \ref{polyaTheo} provides a way to compute the cycle index of $G\wr_{\imp}P$ from the cycle indices of $G$ and $P$, Wei and Xu give a formula to compute the cycle index of $\prod_{i=1}^s{G_i}$ from the cycle indices of the $G_i$ (assuming that the $G_i$ are permutation groups on finite sets).

\begin{definition}\label{refProdDef}(Wei-Xu, \cite[Definition 2.2]{WX93a})
Let $R$ be a commutative ring. The \emph{$\divideontimes$-product} is the unique function $\divideontimes:R[x_i\mid i\in\IN^+]^2\rightarrow R[x_i\mid i\in\IN^+]$, written in infix-notation, such that for
\[
f=\sum_{i_1,\ldots,i_u\in\IN}{a_{i_1,\ldots,i_u}x_1^{i_1}\cdots x_u^{i_u}}
\]
and
\[
g=\sum_{j_1,\ldots,j_v\in\IN}{b_{j_1,\ldots,j_v}x_1^{j_1}\cdots x_v^{j_v}},
\]
one has
\[
f\divideontimes g=\sum_{i_1,\ldots,i_u,j_1,\ldots,j_v\in\IN}{a_{i_1,\ldots,i_u}b_{j_1,\ldots,j_v}\prod_{1\leq\ell\leq u,1\leq m\leq v}{x_{\ell}^{i_{\ell}}\divideontimes x_m^{j_m}}},
\]
and the $\divideontimes$-product of two powers of variables is defined via the formula
\[
x_i^e\divideontimes x_j^f:=x_{\lcm(i,j)}^{ef\gcd(i,j)}.
\]
\end{definition}

By \cite[Lemma 2.3]{WX93a}, the $\divideontimes$-product is a commutative and associative product of polynomials, and the $\divideontimes$-product of an arbitrary number of powers of variables can be computed according to the following formula (which is useful to know if one wants to compute a $\divideontimes$-product of an arbitrary number of polynomials):
\[
\divideontimes_{j=1}^s{x_{i_j}^{e_j}}=x_{\lcm(i_1,\ldots,i_s)}^{n_1\cdots n_s\cdot\frac{i_1\cdots i_s}{\lcm(i_1,\ldots,i_s)}}.
\]
Moreover, we have the following:

\begin{theorem}\label{weiXuTheo}(Wei-Xu, \cite[Theorem 2.4 and its proof]{WX93a})
Let $G_1,\ldots,G_s$ be permutation groups on finite sets. The following hold:
\begin{enumerate}
\item Let $g_i\in G_i$ for $i=1,2,\ldots,s$. Identify the tuple $(g_1,g_2,\ldots,g_s)$ with its image in the permutation group $\prod_{i=1}^s{G_i}$. Then
\[
\CT((g_1,\ldots,g_s))=\divideontimes_{i=1}^s{\CT(g_i)}.
\]
\item We have
\[
\CI(\prod_{i=1}^s{G_i})=\divideontimes_{i=1}^s{\CI(G_i)}.
\]
\end{enumerate}
\end{theorem}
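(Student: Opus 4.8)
The plan is to establish (1) by a direct inspection of the cycle structure of the product permutation on $\prod_{i=1}^s\Omega_i$, and then to deduce (2) from (1) by averaging, using that the $\divideontimes$-product is bilinear in each of its two arguments (hence multilinear on any finite list of inputs). I will argue for general $s$ at once; alternatively, one could treat the case $s=2$ and induct, using the associativity of $\divideontimes$ recorded in \cite[Lemma 2.3]{WX93a}.

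For statement (1), the first step is an elementary lemma on product permutations: if for each $i$ the element $g_i\in\Sym(\Omega_i)$ has a cycle $C_i$ of length $a_i$, then $C_1\times\cdots\times C_s$ is invariant under $(g_1,\ldots,g_s)$, the $(g_1,\ldots,g_s)$-orbit of each of its points $(\omega_1,\ldots,\omega_s)$ has size $\lcm(a_1,\ldots,a_s)$ — this being the least $n\geq 1$ with $g_i^n$ fixing $\omega_i$ for every $i$ — and therefore $C_1\times\cdots\times C_s$ splits into exactly $\frac{a_1\cdots a_s}{\lcm(a_1,\ldots,a_s)}$ cycles, each of length $\lcm(a_1,\ldots,a_s)$. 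Next I would write $\CT(g_i)=\prod_{a\geq 1}x_a^{k_a^{(i)}}$, so that $g_i$ has $k_a^{(i)}$ cycles of length $a$, and sum the above contribution over all ways of picking one cycle of $g_i$ for each $i$; this gives
\[
\CT((g_1,\ldots,g_s))=\prod_{(a_1,\ldots,a_s)}x_{\lcm(a_1,\ldots,a_s)}^{\left(\prod_{i=1}^s k_{a_i}^{(i)}\right)\cdot\frac{a_1\cdots a_s}{\lcm(a_1,\ldots,a_s)}},
\]
the outer product running over all $s$-tuples of positive integers (all but finitely many factors being $1$). On the other hand, expanding $\divideontimes_{i=1}^s\CT(g_i)$ via the definition of $\divideontimes$ on monomials together with its associativity, and then applying the closed formula $\divideontimes_{j=1}^s x_{i_j}^{e_j}=x_{\lcm(i_1,\ldots,i_s)}^{e_1\cdots e_s\cdot\frac{i_1\cdots i_s}{\lcm(i_1,\ldots,i_s)}}$ recalled just before this theorem (with $e_j=k_{a_j}^{(j)}$), gives exactly the same polynomial. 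Comparing the two yields (1).

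For statement (2), combining (1) with the distributivity of $\divideontimes$ over sums and the fact that rational scalars pass through $\divideontimes$, I would compute
\[
\CI\Bigl(\prod_{i=1}^s G_i\Bigr)=\frac{1}{\prod_{i=1}^s|G_i|}\sum_{g_1\in G_1}\cdots\sum_{g_s\in G_s}\Bigl(\divideontimes_{i=1}^s\CT(g_i)\Bigr)=\divideontimes_{i=1}^s\Bigl(\frac{1}{|G_i|}\sum_{g_i\in G_i}\CT(g_i)\Bigr)=\divideontimes_{i=1}^s\CI(G_i),
\]
the middle equality being repeated use of multilinearity of $\divideontimes$.

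Everything here is elementary; the real content is the cycle-decomposition lemma for product permutations and the index bookkeeping needed to identify the two sides of (1). The arithmetic fact underpinning that identification — the telescoping of the $\gcd$'s into $\frac{a_1\cdots a_s}{\lcm(a_1,\ldots,a_s)}$ — is already packaged into the closed formula for iterated $\divideontimes$-products quoted before the theorem, so I do not expect a genuine obstacle beyond keeping the indices straight.
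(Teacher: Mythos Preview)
Your argument is correct and is precisely the standard proof: the orbit-decomposition lemma for product permutations combined with the closed formula for iterated $\divideontimes$-products of variable powers, followed by multilinearity to pass from cycle types to cycle indices. Note, however, that the paper does not supply its own proof of this theorem; it merely quotes the result (and its proof) from \cite[Theorem 2.4]{WX93a}, so there is nothing to compare against here beyond observing that your approach is exactly the one Wei and Xu give.
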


\begin{example}\label{permProductEx}
We compute the cycle index of the direct product $\Sym(3)\times\Sym(4)$. By Proposition \ref{ciSymProp}, we have
\[
\CI(\Sym(3))=\frac{1}{6}x_1^3+\frac{1}{2}x_1x_2+\frac{1}{3}x_3,
\]
and
\[
\CI(\Sym(4))=\frac{1}{24}x_1^4+\frac{1}{4}x_1^2x_2+\frac{1}{8}x_2^2+\frac{1}{3}x_1x_3+\frac{1}{4}x_4.
\]
Table \ref{mathrefTable1} provides an overview of the $\divideontimes$-products $\pi_1\divideontimes\pi_2$ that can be formed from variable powers $\pi_1$ and $\pi_2$ that occur in one of the monomials of $\CI(\Sym(3))$ and $\CI(\Sym(4))$ respectively, according to the formula $x_i^e\divideontimes x_j^f=x_{\lcm(i,j)}^{ef\gcd(i,j)}$ from above.

\begin{table}[h]\centering
\begin{tabular}{|c|c|c|c|c|}\hline
\diagbox{$\pi_2$}{$\pi_1$} & $x_1$ & $x_1^3$ & $x_2$ & $x_3$ \\ \hline
$x_1$ & $x_1$ & $x_1^3$ & $x_2$ & $x_3$ \\ \hline
$x_1^2$ & $x_1^2$ & $x_1^6$ & $x_2^2$ & $x_3^2$ \\ \hline
$x_1^4$ & $x_1^4$ & $x_1^{12}$ & $x_2^4$ & $x_3^4$ \\ \hline
$x_2$ & $x_2$ & $x_2^3$ & $x_2^2$ & $x_6$ \\ \hline
$x_2^2$ & $x_2^2$ & $x_2^6$ & $x_2^4$ & $x_6^2$ \\ \hline
$x_3$ & $x_3$ & $x_3^3$ & $x_6$ & $x_3^3$ \\ \hline
$x_4$ & $x_4$ & $x_4^3$ & $x_4^2$ & $x_{12}$ \\ \hline
\end{tabular}
\caption{$\divideontimes$-products $\pi_1\divideontimes\pi_2$ of powers of variables.}
\label{mathrefTable1}
\end{table}

Based on this information, we can compute all possible $\divideontimes$-products of a term $\tau_1$ of $\CI(\Sym(3))$ with a term $\tau_2$ of $\CI(\Sym(4))$, as displayed in Table \ref{mathrefTable2}. Note that the coefficient of $\tau_1\divideontimes\tau_2$ is the product of the coefficients of $\tau_1$ and $\tau_2$, and the monomial of $\tau_1\divideontimes\tau_2$ is the usual polynomial product of all possible $\divideontimes$-products that can be formed from a full variable power in $\tau_1$ with a full variable power in $\tau_2$. For example,
\[
(\frac{1}{2}x_1x_2)\divideontimes(\frac{1}{4}x_1^2x_2)=\frac{1}{8}(x_1\divideontimes x_1^2)(x_1\divideontimes x_2)(x_2\divideontimes x_1^2)(x_2\divideontimes x_2)=\frac{1}{8}x_1^2x_2x_2^2x_2^2=\frac{1}{8}x_1^2x_2^5.
\]

\begin{table}[h]\centering
\begin{tabular}{|c|c|c|c|}\hline
\diagbox{$\tau_2$}{$\tau_1$} & $\frac{1}{6}x_1^3$ & $\frac{1}{2}x_1x_2$ & $\frac{1}{3}x_3$ \\ \hline
$\frac{1}{24}x_1^4$ & $\frac{1}{144}x_1^{12}$ & $\frac{1}{48}x_1^4x_2^4$ & $\frac{1}{72}x_3^4$ \\ \hline
$\frac{1}{4}x_1^2x_2$ & $\frac{1}{24}x_1^6x_2^3$ & $\frac{1}{8}x_1^2x_2^5$ & $\frac{1}{12}x_3^2x_6$ \\ \hline
$\frac{1}{8}x_2^2$ & $\frac{1}{48}x_2^6$ & $\frac{1}{16}x_2^6$ & $\frac{1}{24}x_6^2$ \\ \hline
$\frac{1}{3}x_1x_3$ & $\frac{1}{18}x_1^3x_3^3$ & $\frac{1}{6}x_1x_2x_3x_6$ & $\frac{1}{9}x_3^4$ \\ \hline
$\frac{1}{4}x_4$ & $\frac{1}{24}x_4^3$ & $\frac{1}{8}x_4^3$ & $\frac{1}{12}x_{12}$ \\ \hline
\end{tabular}
\caption{$\divideontimes$-products $\tau_1\divideontimes\tau_2$ of terms.}
\label{mathrefTable2}
\end{table}

$\CI(\Sym(3)\times\Sym(4))$, the $\divideontimes$-product of $\CI(\Sym(3))$ and $\CI(\Sym(4))$, is the sum of all the terms displayed in the body of Table \ref{mathrefTable2}. Therefore, it is
\begin{align*}
&\frac{1}{144}x_1^{12}+\frac{1}{24}x_1^6x_2^3+\frac{1}{48}x_1^4x_2^4+\frac{1}{18}x_1^3x_3^3+\frac{1}{8}x_1^2x_2^5+\frac{1}{6}x_1x_2x_3x_6+\frac{1}{12}x_2^6+\frac{1}{8}x_3^4+\frac{1}{12}x_3^2x_6+ \\
&\frac{1}{6}x_4^3+\frac{1}{24}x_6^2+\frac{1}{12}x_{12}.
\end{align*}
\end{example}

Theorem \ref{weiXuTheo} is useful for the study of cycle indices of holomorphs of finite cyclic groups due to the following reduction result:

\begin{proposition}\label{crtProp}(see \cite[Theorem 3.1]{WX93a})
Let $m=p_1^{k_1}p_2^{k_2}\cdots p_s^{k_s}$ be a positive integer, with the factorization of $m$ into pairwise coprime prime powers displayed. By the Chinese Remainder Theorem, the function
\[
\IZ/m\IZ\rightarrow\prod_{i=1}^s{(\IZ/p_i^{k_i}\IZ)},x+m\IZ\mapsto(x+p_1^{k_1}\IZ,\ldots,x+p_s^{k_s}\IZ),
\]
is a group isomorphism, and it induces an isomorphism of permutation groups
\[
\Hol(\IZ/m\IZ)\rightarrow\prod_{i=1}^s{\Hol(\IZ/p_i^{k_i}\IZ)}.
\]
In particular,
\[
\CI(\Hol(\IZ/m\IZ))=\divideontimes_{i=1}^s{\CI(\Hol(\IZ/p_i^{k_i}\IZ))}.
\]
\end{proposition}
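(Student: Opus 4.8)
The plan is to upgrade the Chinese Remainder Theorem isomorphism to the observation that it is in fact a \emph{ring} isomorphism $\beta:\IZ/m\IZ\rightarrow\prod_{i=1}^s(\IZ/p_i^{k_i}\IZ)$, and then to show that "conjugation by $\beta$" carries $\Hol(\IZ/m\IZ)$ bijectively onto the direct product of permutation groups $\prod_{i=1}^s\Hol(\IZ/p_i^{k_i}\IZ)$ in the sense of Definition \ref{permProductDef}. Concretely, I would let $\iota$ be the map sending a permutation $g$ of $\IZ/m\IZ$ to the permutation $h$ of $\prod_{i=1}^s(\IZ/p_i^{k_i}\IZ)$ determined by $\beta(x^g)=\beta(x)^h$ for all $x$. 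Since $\beta$ is a bijection, a routine check (respecting the right-action product convention) shows that $g\mapsto\iota(g)$ is a well-defined abstract group isomorphism $\Sym(\IZ/m\IZ)\rightarrow\Sym(\prod_{i=1}^s\IZ/p_i^{k_i}\IZ)$, and $(\beta,\iota)$ satisfies condition (2) of Definition \ref{permGroupIsoDef} by construction. It then remains only to check that $\iota$ restricts to a bijection between the two holomorphs.

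For this I would write $\beta(a)=(a_1,\ldots,a_s)$, $\beta(b)=(b_1,\ldots,b_s)$ and recall that an affine permutation of $\IZ/m\IZ$ has the form $\lambda(a,b):x\mapsto ax+b$ with $\gcd(a,m)=1$. Because $\beta$ respects both ring operations, $\beta(ax+b)=(a_ix_i+b_i)_{i=1,\ldots,s}$ whenever $\beta(x)=(x_i)_i$; hence $\iota(\lambda(a,b))$ is exactly the permutation acting coordinate-wise as $(\lambda(a_1,b_1),\ldots,\lambda(a_s,b_s))$, i.e.\ the image of this tuple in the permutation group $\prod_{i=1}^s\Hol(\IZ/p_i^{k_i}\IZ)$ under the homomorphism $\varphi$ of Definition \ref{permProductDef}. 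Here $\gcd(a,m)=1$ is equivalent to $\gcd(a_i,p_i^{k_i})=1$ for every $i$, so each $\lambda(a_i,b_i)$ indeed lies in $\Hol(\IZ/p_i^{k_i}\IZ)$; conversely, given any tuple $(\lambda(a_1,b_1),\ldots,\lambda(a_s,b_s))$ with $\gcd(a_i,p_i^{k_i})=1$ for all $i$, the CRT provides $a,b\in\IZ/m\IZ$ with $\beta(a)=(a_i)_i$ and $\beta(b)=(b_i)_i$, and automatically $\gcd(a,m)=1$, so $\lambda(a,b)\in\Hol(\IZ/m\IZ)$ maps onto this tuple. Thus $\iota$ restricts to a bijection $\Hol(\IZ/m\IZ)\rightarrow\prod_{i=1}^s\Hol(\IZ/p_i^{k_i}\IZ)$, and $(\beta,\iota)$ is the claimed isomorphism of permutation groups.

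For the cycle-index statement I would invoke the elementary fact that an isomorphism of permutation groups $(\beta,\iota):G\rightarrow H$ matches each $g\in G$ with an element $\iota(g)\in H$ of the same cycle type (the bijection $\beta$ transports the cycles of $g$ one-to-one onto the cycles of $\iota(g)$), so $\CT(g)=\CT(\iota(g))$ for all $g$ and hence $\CI(G)=\CI(H)$. Applying this to $G=\Hol(\IZ/m\IZ)$ and $H=\prod_{i=1}^s\Hol(\IZ/p_i^{k_i}\IZ)$, and then Theorem \ref{weiXuTheo}(2), yields $\CI(\Hol(\IZ/m\IZ))=\CI(\prod_{i=1}^s\Hol(\IZ/p_i^{k_i}\IZ))=\divideontimes_{i=1}^s\CI(\Hol(\IZ/p_i^{k_i}\IZ))$. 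I expect the only genuinely non-formal points to be the verification that $\iota(\lambda(a,b))$ is precisely the coordinate-wise action demanded by Definition \ref{permProductDef} — which is where the multiplicativity of $\beta$, not just its additivity, is used — and the observation that permutation-group isomorphisms preserve cycle types; the main nuisance throughout is keeping the right-action conventions straight.
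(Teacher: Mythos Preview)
Your proof is correct. Note, however, that the paper does not actually supply its own proof of this proposition: it is stated with a reference to \cite[Theorem 3.1]{WX93a} and used as a known result, so there is nothing to compare your argument against beyond that citation. Your write-up is precisely the standard argument one would expect behind that reference --- upgrading the CRT map to a ring isomorphism, observing that conjugation by it sends affine maps to tuples of affine maps coordinate-wise, and then invoking Theorem~\ref{weiXuTheo}(2) for the cycle-index identity --- and it is carried out cleanly and in accordance with the paper's conventions.
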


Therefore, it suffices to determine the cycle indices of holomorphs $\Hol(\IZ/p^k\IZ)$ of finite primary cyclic groups $\IZ/p^k\IZ$. This is settled in the results \cite[Theorems 1.2 and 1.3]{WX93a}, which are due to Wei, Gao and Yang and appear to have been published in \cite{WGY88a}, a two-page research announcement which the authors were unable to find through an electronic search. In order to make the proof of these results accessible, and because there seem to be a few typographical errors in \cite[Theorems 1.2 and 1.3]{WX93a}, we will formulate those results as Theorem \ref{primaryCyclicTheo} and give a detailed proof of them. Our proof provides more than just the cycle index of $\Hol(\IZ/p^k\IZ)$ itself; we determine the cycle type of a given affine permutation $x\mapsto ax+b$ of $\IZ/p^k\IZ$ in terms of $a$ and $b$ (see Proposition \ref{primaryCyclicProp}) and use this to determine the cycle index of $\Hol(\IZ/p^k\IZ)$. Before stating Proposition \ref{primaryCyclicProp}, we recall the following elementary number-theoretic lemma:

\begin{lemma}\label{primaryCyclicLem}
Let $p$ be a prime, and let $k$ be a positive integer.
\begin{enumerate}
\item If $p>2$, then for every positive integer $e$, we have $\nu_p((1+p)^e-1)=1+\nu_p(e)$. Consequently, the unit $1+p$ of the ring $\IZ/p^k\IZ$ is a generator of the Sylow $p$-subgroup of the (cyclic) unit group $(\IZ/p^k\IZ)^{\ast}$, and every unit $a\in(\IZ/p^k\IZ)^{\ast}$ can be written in a unique way as a product $u\cdot(1+p)^e$ where $u\in(\IZ/p^k\IZ)^{\ast}$ has order $\ord(a)_{p'}$ and $e\in\{0,1,\ldots,p^{k-1}-1\}$. One has $a-1\in(\IZ/p^k\IZ)^{\ast}$ if and only if $u>1$ (i.e., if and only if $\ord(a)_{p'}>1$). If $u=1$, then the order of the kernel of the multiplication by $a-1$ on $\IZ/p^k\IZ$ is $p^{1+\nu_p^{(k-1)}(e)}$.
\item Assume that $p=2$. For every positive integer $e$ and every $\epsilon\in\{0,1\}$, one has
\[
\nu_2((-1)^{\epsilon}5^e-1)=\begin{cases}1, & \text{if }\epsilon=1, \\ 2+\nu_2(e), & \text{if }\epsilon=0.\end{cases}
\]
Consequently, the unit group $(\IZ/2^k\IZ)^{\ast}$ is the direct product of its cyclic subgroups generated by $-1$ and $5$ respectively, which are of orders $2$ and $2^{k-2}$ respectively.
\end{enumerate}
\end{lemma}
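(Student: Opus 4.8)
The plan is to prove the two displayed $p$-adic valuation formulas first --- these are special cases of the \enquote{lifting the exponent} principle, and I would give the short self-contained inductive argument rather than invoke an external reference. Everything else in the lemma is then a bookkeeping consequence of these formulas together with the structure of the unit groups $(\IZ/p^k\IZ)^{\ast}$.

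For part (1): to show $\nu_p((1+p)^e-1)=1+\nu_p(e)$, write $e=p^a m$ with $p\nmid m$ and induct on $a$. The base case $a=0$ comes from the binomial expansion $(1+p)^m-1=mp+\binom{m}{2}p^2+\cdots$, in which the first summand has $p$-adic valuation exactly $1$ and every later one has valuation at least $2$. For the inductive step one shows that if $x=1+p^b y$ with $b\ge 1$ and $p\nmid y$, then $\nu_p(x^p-1)=b+1$: expanding $x^p-1=p^{b+1}y+\binom{p}{2}p^{2b}y^2+\cdots+p^{pb}y^p$, the leading term has valuation $b+1$, and since $p$ is odd one has $\nu_p(\binom{p}{j})\ge 1$ for $1\le j\le p-1$ and $(p-1)b\ge 2$, so every other term has valuation strictly larger than $b+1$. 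Iterating $a$ times gives the formula. The stated consequences now follow from the standard fact that $(\IZ/p^k\IZ)^{\ast}$ is cyclic of order $p^{k-1}(p-1)$: the formula shows $\ord(1+p)=p^{k-1}$, so $\langle 1+p\rangle$ is the Sylow $p$-subgroup $P$; writing the cyclic group as the internal direct product of $P$ and its unique order-$(p-1)$ subgroup $U$ yields the asserted unique factorization $a=u\cdot(1+p)^e$ with $u\in U$ and $e\in\{0,1,\ldots,p^{k-1}-1\}$, and $\ord(u)\mid p-1$ forces $\ord(u)=\ord(a)_{p'}$. Reduction modulo $p$ annihilates $P$, so $a\equiv u\Mod p$, and since $U\cap P=\{1\}$ we get $u\equiv 1\Mod p$ if and only if $u=1$; hence $a-1$ is a unit iff $u\ne 1$ iff $\ord(a)_{p'}>1$. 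Finally, when $u=1$ the kernel of multiplication by $a-1$ on $\IZ/p^k\IZ$ has order $p^{\min(\nu_p(a-1),\,k)}$, and combining $\nu_p(a-1)=1+\nu_p(e)$ with the elementary identity $\min(1+t,k)=1+\min(t,k-1)$ gives the claimed $p^{1+\nu_p^{(k-1)}(e)}$, the case $a=1$ (i.e.\ $e=0$) being covered by the convention $\nu_p(0)=\infty$.

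Part (2) runs in parallel. I would prove $\nu_2(5^e-1)=2+\nu_2(e)$ by the same induction: the base case uses $5^m-1=(5-1)(5^{m-1}+\cdots+5+1)$, where for odd $m$ the second factor is a sum of an odd number of odd terms, hence odd, giving valuation $2$; the doubling step uses that if $x=1+2^b y$ with $b\ge 2$ and $y$ odd then $x^2-1=2^{b+1}y+2^{2b}y^2$ with $2b\ge b+2$. The case $\epsilon=1$ is immediate from $(-1)5^e-1=-(5^e+1)$ together with $5^e\equiv 1\Mod 4$. For the consequence, $(\IZ/2^k\IZ)^{\ast}$ has order $2^{k-1}$; the valuation formula gives $\ord(5)=2^{k-2}$, while $\ord(-1)=2$ and $-1\notin\langle 5\rangle$ because $5^e\equiv 1\Mod 4$ whereas $-1\equiv 3\Mod 4$ (for $k\ge 2$), so $\langle 5\rangle\cap\langle -1\rangle=\{1\}$ and a comparison of orders shows $(\IZ/2^k\IZ)^{\ast}=\langle -1\rangle\times\langle 5\rangle$, the moduli $k\le 2$ being degenerate special cases.

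I do not anticipate any genuine obstacle: the lemma is elementary. The only points that require care are the valuation inductions --- specifically, checking that in each binomial expansion every non-leading term is \emph{strictly} suppressed, which is exactly where the hypotheses \enquote{$p$ odd} and (for $p=2$) \enquote{$b\ge 2$} are used --- together with a handful of degenerate edge cases, namely $e=0$ in part (1) and small $k$ in part (2).
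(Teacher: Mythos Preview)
Your proof is correct. Note, however, that the paper itself does not prove this lemma at all: it is introduced with the phrase \enquote{we recall the following elementary number-theoretic lemma} and simply stated without argument, as background for the subsequent Proposition on cycle types of affine permutations. Your self-contained lifting-the-exponent argument is a standard and perfectly good way to supply the omitted details, and you have correctly identified the places where the hypotheses $p>2$ (for the inequality $(p-1)b\ge 2$ controlling the top binomial term) and $b\ge 2$ (for the squaring step when $p=2$) are actually needed.
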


\begin{proposition}\label{primaryCyclicProp}
The cycle types of affine permutations $x\mapsto ax+b$ of a primary cyclic group $\IZ/p^k\IZ$, where $a$ and $b$ are integers with $p\nmid a$, are as summarized in Tables \ref{table1} and \ref{table2}. For simplicity, we identify $a$ and $b$ with the corresponding elements of the finite ring $\IZ/p^k\IZ$, so we can avoid using congruences (when stating an equality, it will be clear from context if we mean an equality of integers or of elements of $\IZ/p^k\IZ$).
\end{proposition}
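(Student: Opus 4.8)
The plan is to reduce everything to a single affine permutation $\lambda(a,b)$ of $\IZ/p^k\IZ$ and to split the analysis according to how $a$ and $b$ interact. If $a=1$ then $\lambda(1,b)$ is the translation by $b$, all of whose cycles have the common length $\aord(b)=p^{k-\nu_p(b)}$ (with $\nu_p(0)=\infty$, so $\lambda(1,0)=\id$), giving $p^{\nu_p(b)}$ cycles of that length. So assume $a\neq 1$, hence $0\leq\nu_p(a-1)<k$, and the exact value of $\nu_p(a-1)$ in terms of the canonical decomposition of $a$ is the one provided by Lemma \ref{primaryCyclicLem}. Since the image of multiplication by $a-1$ on $\IZ/p^k\IZ$ is $p^{\nu_p(a-1)}\IZ/p^k\IZ$, the permutation $\lambda(a,b)$ has a fixed point if and only if $\nu_p(b)\geq\nu_p(a-1)$, and in that case its fixed-point set is a coset of the kernel of multiplication by $a-1$, of size $p^{\nu_p(a-1)}$. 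This dichotomy splits off two cases.

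\emph{Case 1: $\nu_p(b)\geq\nu_p(a-1)$.} Choosing $t$ with $(a-1)t=b$ and conjugating $\lambda(a,b)$ by the translation $x\mapsto x+t$ reduces us to $b=0$, i.e.\ to the linear map $x\mapsto ax$; as conjugation preserves cycle type, the cycle type in this case does not depend on $b$. For $x\mapsto ax$, the orbit of an element $x$ has length $\ord(a\bmod p^{k-\nu_p(x)})$, which depends only on $\nu_p(x)$, and the number of $x$ with $\nu_p(x)=j$ is $1$ for $j=k$ and $\varphi(p^{k-j})$ for $0\leq j\leq k-1$. Thus it remains to evaluate the orders $\ord(a\bmod p^{m})$ for $1\leq m\leq k$ using the explicit structure of $(\IZ/p^m\IZ)^{\ast}$ from Lemma \ref{primaryCyclicLem} (with $a$ decomposed via its prime-to-$p$ part and a power of $1+p$ when $p$ is odd, or via $-1$ and $5$ when $p=2$), and then, for each length occurring, to add up the element counts of the strata $\{x:\nu_p(x)=j\}$ producing that length and divide by the length; the resulting multiplicities are the entries of the tables in this case.

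\emph{Case 2: $\nu_p(b)<\nu_p(a-1)$.} Here I would show that $\lambda(a,b)$ has all of its cycles of one and the same length, namely $N:=\ord(\lambda(a,b))$, and hence exactly $p^k/N$ of them; the value of $N$ is again read off from Lemma \ref{primaryCyclicLem} (via $\ord(a)$ and the translation $\lambda(a,b)^{\ord(a)}=\lambda(1,b(1+a+\cdots+a^{\ord(a)-1}))$), and in this case $N$ is a power of $p$. To prove the uniformity of cycle lengths it suffices — since every orbit length is then a power of $p$ dividing $N$ — to check that $\lambda(a,b)^n$ is fixed-point-free for every proper divisor $n$ of $N$. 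Writing $\lambda(a,b)^n=\lambda(a^n,bs_n)$ with $s_n=1+a+\cdots+a^{n-1}$ and using the identity $(a-1)s_n=a^n-1$ together with the valuation formulas of Lemma \ref{primaryCyclicLem} applied to $a^n-1$, one gets $\nu_p(s_n)=\nu_p(a^n-1)-\nu_p(a-1)$ whenever $a^n\neq 1$; the strict inequality $\nu_p(b)<\nu_p(a-1)$ then forces $\nu_p(bs_n)<\nu_p(a^n-1)$ when $a^n\neq 1$, and $bs_n\neq 0$ in $\IZ/p^k\IZ$ when $a^n=1$ (here one uses that $n$ is a \emph{proper} divisor of $N$), so in either case $\lambda(a,b)^n$ has no fixed point.

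For $p=2$ the same two cases apply, with the structural simplification that $a-1$ is never a unit (because $\lvert(\IZ/2^k\IZ)^{\ast}\rvert$ is a power of $2$), so that Case 1 is governed by $\nu_2(a-1)\in\{1\}\cup\{2+\nu_2(e):e\geq 0\}$ according to Lemma \ref{primaryCyclicLem}(2), which also supplies the needed orders $\ord(a\bmod 2^m)$ and translation valuations. I expect the main obstacle to be the precise valuation bookkeeping: in Case 2, pinning down $\nu_p(s_n)$ and $\nu_p(bs_n)$ sharply enough (including the truncation at $p^k$) to exclude fixed points of every proper iterate, and in Case 1, correctly aggregating the strata $\{x:\nu_p(x)=j\}$ that share a common orbit length so that the counts match the somewhat intricate layout of Tables \ref{table1} and \ref{table2}.
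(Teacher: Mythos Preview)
Your proposal is correct and takes a genuinely different route from the paper. The paper works uniformly by counting, for each $\ell$, the solutions of $(a^{\ell}-1)x\equiv -b(1+a+\cdots+a^{\ell-1})\pmod{p^k}$ (the fixed points of $A^{\ell}$), using the solvability criterion for linear congruences and then extracting cycle counts by inclusion--exclusion. Its primary case split is $a\not\equiv1\pmod p$ versus $a\equiv1\pmod p$, with the latter further subdivided by the comparison of $\nu_p^{(k)}(b)$ and $\nu_p^{(k)}(a-1)$.

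Your organizing principle is instead the existence of a fixed point. In Case~1 you exploit that $b$ lies in the image of $a-1$ to conjugate $\lambda(a,b)$ to the linear map $\lambda(a,0)$, and then read off orbit lengths stratum by stratum via $\ord(a\bmod p^{k-\nu_p(x)})$; this is a clean reduction the paper does not use, and it unifies the paper's first two rows (the case $a\not\equiv1\pmod p$ is absorbed since then $\nu_p(a-1)=0$). In Case~2 you argue structurally that all cycles share the common length $N=\ord(\lambda(a,b))$ by showing every proper iterate is fixed-point-free, whereas the paper reaches the same conclusion by explicitly tracking when the congruence first becomes universally solvable. Both approaches ultimately rest on Lemma~\ref{primaryCyclicLem} for the needed valuations, and the ``bookkeeping'' you flag---aggregating strata with equal orbit length in Case~1, and controlling $\nu_p(s_n)$ modulo~$p^k$ in Case~2---is exactly the work the paper carries out as well. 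Your approach buys a more conceptual decomposition (conjugation plus a uniform-length argument); the paper's buys a single mechanical template (solve a linear congruence for each~$\ell$) applied throughout.
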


\begin{table}[h]\centering
\begin{tabular}{|c|c|}\hline
Case & Cycle type of $A:x\mapsto ax+b$ on $\IZ/p^k\IZ$ \\ \hline
$a\not\equiv1\Mod{p}$ & \thead{$x_1^1\cdot x_{\ord(a)_{p'}}^{(p^{k-\nu_p(\ord(a))}-1)/\ord(a)_{p'}}\cdot$ \\ $\prod_{s=1}^{\nu_p(\ord(a))}{x_{\ord(a)_{p'}p^s}^{p^{k-1-\nu_p(\ord(a))}(p-1)/\ord(a)_{p'}}}$} \\ \hline
\thead{$a\equiv1\Mod{p}$, \\ $\nu_p^{(k)}(b)\geq\nu_p^{(k)}(a-1)$} & \thead{$x_1^{p^{\nu_p^{(k)}(a-1)}}\cdot\prod_{s=1}^{k-\nu_p^{(k)}(a-1)}{x_{p^s}^{p^{\nu_p^{(k)}(a-1)-1}(p-1)}}$} \\ \hline
\thead{$a\equiv1\Mod{p}$, \\ $\nu_p^{(k)}(b)<\nu_p^{(k)}(a-1)$} & $x_{\aord(b)}^{p^k/\aord(b)}$ \\ \hline
\end{tabular}
\caption{Cycle types of affine permutations of $\IZ/p^k\IZ$ for $p>2$.}
\label{table1}
\end{table}

\begin{table}[h]\centering
\begin{tabular}{|c|c|}\hline
Case & \thead{Cycle type of $A:x\mapsto ax+b$ on $\IZ/2^k\IZ$, \\ $a=(-1)^{\epsilon}5^e$ with $\epsilon\in\{0,1\}$ and $e\in\{0,1,\ldots,2^{k-2}-1\}$} \\ \hline
$k=1$ & $x_{\aord(b)}^{2/\aord(b)}$ \\ \hline
$k=2$, $a=1$ & $x_{\aord(b)}^{4/\aord(b)}$ \\ \hline
$k=2$, $a=3$, $2\mid b$ & $x_1^2x_2^1$ \\ \hline
$k=2$, $a=3$, $2\nmid b$ & $x_2^2$ \\ \hline
$k\geq3$, $\epsilon=1$, $2\nmid b$ & $x_{2^{k-1-\nu_2^{(k-2)}(e)}}^{2^{1+\nu_2^{(k-2)}(e)}}$ \\ \hline
$k\geq3$, $\epsilon=1$, $2\mid b$ & $x_1^2x_2^{2^{2+\nu_2^{(k-3)}(e)}-1}\prod_{s=2}^{k-2-\nu_2^{(k-3)}(e)}{x_{2^s}^{2^{1+\nu_2^{(k-3)}(e)}}}$ \\ \hline
$k\geq3$, $\epsilon=0$, $\nu_2^{(k)}(b)<\nu_2^{(k)}(a-1)$ & $x_{\aord(b)}^{2^k/\aord(b)}$ \\ \hline
$k\geq3$, $\epsilon=0$, $\nu_2^{(k)}(b)\geq\nu_2^{(k)}(a-1)$ & $x_1^{2^{2+\nu_2^{(k-2)}(e)}}\cdot\prod_{s=1}^{k-2-\nu_2^{(k-2)}(e)}{x_{2^s}^{2^{1+\nu_2^{(k-2)}(e)}}}$ \\ \hline
\end{tabular}
\caption{Cycle types of affine permutations of $\IZ/2^k\IZ$.}
\label{table2}
\end{table}

\begin{proof}[Proof of Proposition \ref{primaryCyclicProp}]
Denote by $A$ the affine permutation $x\mapsto ax+b$ of $\IZ/p^k\IZ$, for fixed $a,b\in\IZ$ with $p\nmid a$. We will need to distinguish between the two cases \enquote{$p>2$} and \enquote{$p=2$}, but first, we make a general observation that applies to both cases. For each non-negative integer $\ell$, the $\ell$-th power (iterate) $A^{\ell}$ of $A$ is the affine permutation $x\mapsto a^{\ell}x+b(1+a+a^2+\cdots+a^{\ell-1})$ of $\IZ/p^k\IZ$. Therefore, the elements $x\in\IZ/p^k\IZ$ that lie on a cycle of $A$ of length a divisor of a given positive integer $\ell$ are the solutions $x$ of the equation
\begin{equation}\label{eq0}
a^{\ell}x+b(1+a+a^2+\cdots+a^{\ell-1})=x.
\end{equation}
We observe two things concerning formula (\ref{eq0}). Firstly, because its solution set consists of those points whose cycle length \emph{divides} $\ell$ rather than being equal to $\ell$, the actual number of (points on) cycles of a given length $\ell$ need to be determined using inclusion-exclusion counting. Secondly, formula (\ref{eq0}) implies that
\[
\ord(A)=\ord(a)\cdot\aord(b(1+a+a^2+\cdots+a^{\ord(a)-1})).
\]
This is because $\ord(A)$ is the smallest $\ell\in\IN^+$ such that equation (\ref{eq0}) holds for all $x\in\IZ/p^k\IZ$. Since this implies that $(a^{\ord(A)}-1)x$ must be constant, it follows that $\ord(a)\mid\ord(A)$, and we can write $\ord(A)=k\cdot\ord(a)$ for a suitable $k\in\IN^+$. Substituting $\ell:=k\cdot\ord(a)$ in equation (\ref{eq0}), we obtain
\[
x+b(1+a+a^2+\cdots+a^{k\ord(a)-1})=x.
\]
Since the sequence $(a^n)_{n\in\IN}$ is periodic with period $\ord(a)$, this can be equivalently rewritten into
\[
kb(1+a+a^2+\cdots+a^{\ord(a)-1})=0,
\]
and the smallest value of $k$ that makes this true is $\aord(b(1+a+a^2+\cdots+a^{\ord(a)-1}))$.

Now it is time for the aforementioned case distinction. First, assume that $p>2$. As long as $a-1$ is a unit, equation (\ref{eq0}) can be equivalently rewritten (by multiplying both sides by $a-1$) into the form
\begin{equation}\label{eq1}
(a^{\ell}-1)(a-1)x=-b\cdot(a^{\ell}-1).
\end{equation}
It suffices to consider those $\ell$ that divide $\ord(A)=\ord(a)\cdot\aord(b(1+a+a^2+\cdots+a^{\ord(a)-1}))$. We now make another case distinction.
\begin{enumerate}
\item Case: $a-1\in(\IZ/p^k\IZ)^{\ast}$ (equivalently, $a\not\equiv1\Mod{p}$ if $a$ is viewed as an integer). In particular, $a\not=1$, and $x\mapsto ax$ is an automorphism of $\IZ/p^k\IZ$ that fixes only the neutral element $0$. However, $1+a+a^2+\cdots+a^{\ord(a)-1}$ is clearly a fixed point of $x\mapsto ax$, hence equal to $0$, and thus $\ord(A)=\ord(a)\mid p^{k-1}(p-1)$. Since $a-1$ is a unit, we may consider equation (\ref{eq1}). For $\ell=1$, we can cancel $a^{\ell}-1=a-1$ on both sides, and since $a-1$ is a unit, the resulting equivalent equation $(a-1)x=-b$ has exactly one solution, whence $A$ has exactly one fixed point. For larger values of $\ell$, we make a subcase distinction.
\begin{enumerate}
\item Subcase: $p\nmid\ord(a)$. Then $\ord(a)\mid \phi(p^k)_{p'}=p-1$, and we consider values of $\ell>1$ with $\ell\mid\ord(a)$. If $\ell<\ord(a)$, then $a^{\ell}$ is a nontrivial unit whose order divides $p-1$, so that $a^{\ell}-1$ is a unit by Lemma \ref{primaryCyclicLem}(1), and equation (\ref{eq1}) has exactly one solution (the fixed point already mentioned above). In particular, there are no cycles of length $\ell$ then. If, on the other hand, $\ell=\ord(a)$, then $a^{\ell}-1=0$, and equation (\ref{eq1}) becomes $0=0$, which is universally solvable. This means that
\[
\CT(A)=x_1^1\cdot x_{\ord(a)}^{\frac{p^k-1}{\ord(a)}},
\]
which matches with the (more general) formula for the case \enquote{$a\not\equiv1\Mod{p}$} in Table \ref{table1}.
\item Subcase: $p\mid\ord(a)$. Then by Lemma \ref{primaryCyclicLem}(1), we have $a=u\cdot(1+p)^e$ for some unit $u$ with $\ord(u)=\ord(a)_{p'}$ and some $e\in\{1,2,\ldots,p^{k-1}-1\}$. If $\ord(u)=\ord(a)_{p'}\nmid\ell$, then $u$ is not completely annihilated when raising $a$ to the $\ell$-th power, whence $a^{\ell}-1$ is a unit by Lemma \ref{primaryCyclicLem}(1), and so as in the previous subcase, we conclude that there are no cycles of length $\ell$. Now assume that $\ord(a)_{p'}\mid\ell$. Then $\ell=\ord(a)_{p'}\cdot p^s$ for some $s\in\{0,1,\ldots,\nu_p(\ord(a))\}$. Consequently,
\[
a^{\ell}-1=u^{\ell}\cdot(1+p)^{e\ell}-1=(1+p)^{e\ell}-1,
\]
where the second equality uses that $\ord(u)=\ord(a)_{p'}$ (whence $u$ is annihilated when raised to the $\ell$-th power). Therefore, the kernel of the multiplication $\mu_{a^{\ell}-1}$ by $a^{\ell}-1$ on $\IZ/p^k\IZ$ has order
\[
p^{1+\nu_p^{(k-1)}(e\ell)}=p^{1+\min\{k-1,\nu_p(e)+s\}}=p^{1+\nu_p(e)+s},
\]
where the last equality uses that $s\leq\nu_p(\ord(a))=k-1-\nu_p(e)$. Since equation (\ref{eq1}), which can be equivalently rewritten as
\[
(a^{\ell}-1)x=-(a-1)^{-1}b(a^{\ell}-1),
\]
has at least one solution (such as $x=-(a-1)^{-1}b$), its solution set is a coset of $\ker(\mu_{a^{\ell}-1})$ and thus has size $p^{1+\nu_p(e)+s}=p^{k-\nu_p(\ord(a))+s}$. We conclude that the cycle type of $A$ is
\[
x_1^1\cdot x_{\ord(a)_{p'}}^{\frac{p^{k-\nu_p(\ord(a))}-1}{\ord(a)_{p'}}}\cdot\prod_{s=1}^{\nu_p(\ord(a))}{x_{\ord(a)_{p'}p^s}^{p^{k-1-\nu_p(\ord(a))}\cdot\frac{p-1}{\ord(a)_{p'}}}},
\]
which is just the formula for the case \enquote{$a\not\equiv1\Mod{p}$} in Table \ref{table1}.
\end{enumerate}
\item Case: $a-1\notin(\IZ/p^k\IZ)^{\ast}$ (equivalently, $a\equiv1\Mod{p}$ if $a$ is viewed as an integer). Then the factor $u$ in the unique factorization $a=u(1+p)^e$ as in Lemma \ref{primaryCyclicLem}(1) is trivial, so that $a=(1+p)^e$ for a unique $e\in\{0,1,\ldots,p^{k-1}-1\}$, and $\ord(a)$ is a power of $p$. Hence $\ord(A)\mid p^k$. We consider values of $\ell$ with $\ell\mid\ord(A)$; in particular, we can write $\ell=p^s$ for some $s\in\{0,1,\ldots,k\}$. We make a subcase distinction:
\begin{enumerate}
\item Subcase: $a=1$ (equivalently, $e=0$). Then $A$ is of the form $x\mapsto x+b$ for some $b\in\IZ/p^k\IZ$, and
\[
\CT(A)=x_{\aord(b)}^{\frac{p^k}{\aord(b)}},
\]
which matches with both formulas in the last two rows of Table \ref{table1} (the one in the penultimate row applies when $b=0$, the other when $b\not=0$ as an element of $\IZ/p^k\IZ$).
\item Subcase: $a\not=1$ (equivalently, $e>0$). When viewing $a$ as an integer, we have $\nu_p(a-1)=\nu_p^{(k)}(a-1)$. We reinterpret equation (\ref{eq0}) as the congruence
\begin{equation}\label{eq3}
(a^{\ell}-1)x\equiv-b(1+a+a^2+\cdots+a^{\ell-1})\Mod{p^k},
\end{equation}
and we are interested in counting its solutions modulo $p^k$. Note that $\nu_p(a-1)=1+\nu_p(e)$, that $\nu_p(a^{\ell}-1)=1+\nu_p(e)+s$, and consequently,
\[
\nu_p(1+a+a^2+\cdots+a^{\ell-1})=\nu_p(\frac{a^{\ell}-1}{a-1})=s.
\]
We will make use of the elementary fact that a linear congruence $cx\equiv d\Mod{m}$, where $c,d\in\IZ$, $m\in\IN^+$, and $x$ is a variable, has at least one solution if and only if $\gcd(c,m)\mid\gcd(d,m)$, in which case the number of solutions modulo $m$ is exactly $\gcd(c,m)$. We distinguish further between the following two subsubcases:
\begin{enumerate}
\item Subsubcase: $\nu_p(b)\geq\nu_p(a-1)$. Observe that we have $\nu_p^{(k)}(b)\geq\nu_p^{(k)}(a-1)$, because (as observed above) $\nu_p(a-1)=\nu_p^{(k)}(a-1)$, and $\nu_p^{(k)}(b)$ can only be distinct from $\nu_p(b)$ if $b=0$ (as an element of $\IZ/p^k\IZ$), in which case $\nu_p^{(k)}(b)=k\geq\nu_p^{(k)}(a-1)$. Moreover,
\begin{align*}
\nu_p(a^{\ell}-1)&=\nu_p(a-1)+\nu_p(1+a+\cdots+a^{\ell-1}) \\
&\leq\nu_p(b)+\nu_p(1+a+\cdots+a^{\ell-1}) \\
&=\nu_p(-b(1+a+\cdots+a^{\ell-1})),
\end{align*}
whence $\gcd(a^{\ell-1},p^k)$ divides $\gcd(-b(1+a+\cdots+a^{\ell-1}),p^k)$. Hence congruence (\ref{eq3}) is always solvable, and for $\ell=p^s$ with $s\in\{0,1,\ldots,k-\nu_p(a-1)\}$, it has exactly
\[
\gcd(p^k,a^{\ell}-1)=p^{\min\{k,1+\nu_p(e)+s\}}=p^{1+\nu_p(e)+s}=p^{\nu_p(a-1)+s}
\]
solutions modulo $p^k$. Recalling that $\nu_p(a-1)=\nu_p^{(k)}(a-1)$, we conclude that
\[
\CT(A)=x_1^{p^{\nu_p^{(k)}(a-1)}}\cdot\prod_{s=1}^{k-\nu_p^{(k)}(a-1)}{x_{p^s}^{p^{\nu_p^{(k)}(a-1)-1}(p-1)}},
\]
which is just the formula from the penultimate row in Table \ref{table1}.
\item Subsubcase: $\nu_p(b)<\nu_p(a-1)$. In particular, since $\nu_p(a-1)=\nu_p^{(k)}(a-1)\leq k$, we conclude that $\nu_p(b)<k$, whence $\nu_p(b)=\nu_p^{(k)}(b)$ and thus $\nu_p^{(k)}(b)<\nu_p^{(k)}(a-1)$. For $\ell=p^s$ with $s\in\{0,1,\ldots,k-\nu_p^{(k)}(b)-1\}$, we have
\begin{align*}
\nu_p^{(k)}(-b(1+a+\cdots+a^{\ell-1}))&=\min\{k,\nu_p^{(k)}(b)+s\}=\nu_p^{(k)}(b)+s \\
&<\min\{k,\nu_p^{(k)}(a-1)+s\}=\nu_p^{(k)}(a-1).
\end{align*}
Hence $\gcd(p^k,a^{\ell}-1)=p^{\nu_p^{(k)}(a-1)}$ does \emph{not} divide $\gcd(p^k,-b(1+a+\cdots+a^{\ell-1}))$, whence congruence (\ref{eq3}) has no solutions. However, for $\ell=p^{k-\nu_p^{(k)}(b)}$, we have
\[
\gcd(p^k,a^{\ell}-1)=p^k=\gcd(p^k,-b(1+a+\cdots+a^{\ell-1})),
\]
whence congruence (\ref{eq3}) is universally solvable. This means that
\[
\CT(A)=x_{\aord(b)}^{\frac{p^k}{\aord(b)}},
\]
which is just the formula from the last row of Table \ref{table1}.
\end{enumerate}
\end{enumerate}
\end{enumerate}

We now assume that $p=2$. We will also assume that $k\geq3$ (the cases $k=1,2$ are easily dealt with separately). There are unique $\epsilon\in\{0,1\}$ and $e\in\{0,1,\ldots,2^{k-2}-1\}$ such that $a=(-1)^{\epsilon}5^e$. As for $p>2$, we need to count the number of solutions of the equation
\[
(a^{\ell}-1)x=-b(1+a+a^2+\cdots+a^{\ell-1})
\]
over $\IZ/2^k\IZ$ for the various divisors $\ell$ of $\ord(A)$. Viewing $a$ and $b$ as integers, we can reinterpret this problem by considering the congruence
\begin{equation}\label{eq4}
(a^{\ell}-1)x\equiv-b(1+a+a^2+\cdots+a^{\ell-1})\Mod{2^k},
\end{equation}
and counting its number of solutions modulo $2^k$. Note that $\ord(A)$ is always a divisor of $2^k$; in particular, we only need to consider $\ell$ that are powers of $2$. We make a case distinction.
\begin{enumerate}
\item Case: $e=0$. We make a subcase distinction.
\begin{enumerate}
\item Subcase: $\epsilon=0$. Then $a=1$, so $A$ is of the form $x\mapsto x+b$. It follows that
\[
\CT(A)=x_{\aord(b)}^{\frac{2^k}{\aord(b)}},
\]
which matches with the formulas for $\CT(A)$ given in Table \ref{table2} both when $b\not=0$ (corresponding to the penultimate row in Table \ref{table2}) and when $b=0$ (corresponding to the last row in Table \ref{table2}).
\item Subcase: $\epsilon=1$. Then $a=-1$. We have $\ord(A)=\ord(a)\cdot\aord(b(1+(-1)))=2$. Hence we only need to consider $\ell=1$, for which congruence (\ref{eq4}) becomes $-2x\equiv-b\Mod{2^k}$. We make a subsubcase distinction:
\begin{enumerate}
\item Subsubcase: $2\nmid b$. Then $-2x\equiv-b\Mod{2^k}$ has no solutions, i.e., $A$ has no fixed points. Hence
\[
\CT(A)=x_2^{2^{k-1}},
\]
which matches with the formula for the case \enquote{$\epsilon=1$ and $2\nmid b$} from Table \ref{table2}.
\item Subsubcase: $2\mid b$. Then $-2x\equiv-b\Mod{2^k}$ has exactly $2$ solutions modulo $2^k$. It follows that
\[
\CT(A)=x_1^2x_2^{2^{k-1}-1},
\]
which matches with the formula for the case \enquote{$\epsilon=1$ and $2\mid b$} from Table \ref{table2}.
\end{enumerate}
\end{enumerate}
\item Case: $e\geq1$. Then $\nu_2(e)=\nu_2^{(k-2)}(e)=\nu_2^{(k-3)}(e)$, and since $a\not=1$, we have $\nu_2(a-1)=\nu_2^{(k)}(a-1)$. We make a subcase distinction.
\begin{enumerate}
\item Subcase: $\epsilon=1$ (i.e., $a\equiv3\Mod{4}$). Since $\nu_2(a-1)=1$ by Lemma \ref{primaryCyclicLem}(2), congruence (\ref{eq4}) is equivalent to
\begin{equation}\label{eq5}
(a^{\ell}-1)(a-1)x\equiv-b(a^{\ell}-1)\Mod{2^{k+1}}.
\end{equation}
Write $\ell=2^s$ with $s\geq0$. We make a subsubcase distinction.
\begin{enumerate}
\item Subsubcase: $2\nmid b$. For $s=0$ (i.e., $\ell=1$), since $\nu_2(a-1)=1$ by Lemma \ref{primaryCyclicLem}(2), congruence (\ref{eq5}) is equivalent to $(a-1)x\equiv-b\Mod{2^k}$. As $2\mid a-1$, this congruence has no solutions, so $A$ has no fixed points. Similarly, for $s=1,2,\ldots,k-2-\nu_2(e)$, we find that since $\nu_2(a^{\ell}-1)=2+s+\nu_2(e)$ by Lemma \ref{primaryCyclicLem}(2), congruence (\ref{eq5}) is equivalent to $(a-1)x\equiv-b\Mod{2^{k-1-s-\nu_2(e)}}$, which has no solutions. However, for $s=k-1-\nu_2(e)=k-1-\nu_2^{(k-2)}(e)$, congruence (\ref{eq5}) is universally solvable. Hence
\[
\CT(A)=x_{2^{k-1-\nu_2^{(k-2)}(e)}}^{2^{1+\nu_2^{(k-2)}(e)}}.
\]
\item Subsubcase: $2\mid b$. As in the previous subsubcase, for $s=0$, congruence (\ref{eq5}) is equivalent to $(a-1)x\equiv-b\Mod{2^k}$, which has precisely $2$ solutions modulo $2^k$, so $A$ has exactly $2$ fixed points. For $s=1,2,\ldots,k-2-\nu_2(e)=k-2-\nu_2^{(k-3)}(e)$, since $\nu_2(a^{\ell}-1)=2+s+\nu_2(e)=2+s+\nu_2^{(k-3)}(e)$ by Lemma \ref{primaryCyclicLem}(2), congruence (\ref{eq5}) is equivalent to $(a-1)x\equiv-b\Mod{2^{k-1-s-\nu_2(e)}}$, which has $2^{2+s+\nu_2(e)}=2^{2+s+\nu_2^{(k-3)}(e)}$ solutions modulo $2^k$. It follows that
\[
\CT(A)=x_1^2\cdot x_2^{2^{2+\nu_2^{(k-3)}(e)}-1}\cdot\prod_{s=2}^{k-2-\nu_2^{(k-3)}(e)}{x_{2^s}^{2^{1+\nu_2^{(k-3)}(e)}}}.
\]
\end{enumerate}
\item Subcase: $\epsilon=0$ (i.e., $a\equiv1\Mod{4}$). Then $\nu_2(a-1)=2+\nu_2(e)$ by Lemma \ref{primaryCyclicLem}(2), so congruence (\ref{eq4}) is equivalent to
\begin{equation}\label{eq6}
(a^{\ell}-1)(a-1)x\equiv-b(a^{\ell}-1)\Mod{2^{k+2+\nu_2(e)}}.
\end{equation}
Write $\ell=2^s$ with $s\geq0$. We make a subsubcase distinction.
\begin{enumerate}
\item Subsubcase: $\nu_2(b)<\nu_2(a-1)=2+\nu_2(e)$. In particular, since $\nu_2(a-1)=\nu_2^{(k)}(a-1)\leq k$, we have $\nu_2(b)<k$ and thus $\nu_2(b)=\nu_2^{(k)}(b)$. By Lemma \ref{primaryCyclicLem}(2), we have $\nu_2(a^{\ell}-1)=2+\nu_2(e)+s$. It follows that for $s=0,1,\ldots,k$, congruence (\ref{eq6}) is equivalent to $(a-1)x\equiv-b\Mod{2^{k-s}}$. If $s<k-\nu_2(b)$, then this congruence has no solutions, whereas for $s=k-\nu_2(b)$, it is universally solvable. Hence
\[
\CT(A)=x_{2^{k-\nu_2(b)}}^{2^{\nu_2(b)}}=x_{\aord(b)}^{\frac{2^k}{\aord(b)}},
\]
in accordance with the formula for the case \enquote{$\epsilon=0$ and $\nu_2^{(k)}(b)<\nu_2^{(k)}(a-1)$} from Table \ref{table2}.
\item Subsubcase: $\nu_2(b)\geq\nu_2(a-1)=2+\nu_2(e)$. Since $\nu_2(a-1)=\nu_2^{(k)}(a-1)$, this implies that $\nu_2^{(k)}(b)\geq\nu_2^{(k)}(a-1)$, because $\nu_2^{(k)}(b)$ can only be distinct from $\nu_2(b)$ if $b=0$ (as an element of $\IZ/2^k\IZ$), in which case $\nu_2^{(k)}(b)=k\geq\nu_2^{(k)}(a-1)$. As in the previous subsubcase, for $s=0,1,\ldots,k$, congruence (\ref{eq6}) is equivalent to $(a-1)x\equiv-b\Mod{2^{k-s}}$. This congruence has exactly $2^{s+\nu_2(a-1)}=2^{s+2+\nu_2(e)}=2^{s+2+\nu_2^{(k-2)}(e)}$ solutions modulo $2^k$ for each $s=0,1,\ldots,k-2-\nu_2(e)=k-2-\nu_2^{(k-2)}(e)$. It follows that
\[
\CT(A)=x_1^{2^{2+\nu_2^{(k-2)}(e)}}\cdot\prod_{s=1}^{k-2-\nu_2^{(k-2)}(e)}{x_{2^s}^{2^{1+\nu_2^{(k-2)}(e)}}},
\]
which matches with the formula for the case \enquote{$\epsilon=0$ and $\nu_2^{(k)}(b)\geq\nu_2^{(k)}(a-1)$} in Table \ref{table2}.
\end{enumerate}
\end{enumerate}
\end{enumerate}
\end{proof}

We are now ready to determine the cycle index of $\Hol(\IZ/p^k\IZ)$:

\begin{theorem}\label{primaryCyclicTheo}(Wei-Gao-Yang \cite{WGY88a}; see also \cite[Theorems 1.2 and 1.3]{WX93a})
The following hold:
\begin{enumerate}
\item For each odd prime $p$, the cycle index of $\Hol(\IZ/p^k\IZ)$ is
\begin{align*}
\frac{1}{p^{2k-1}(p-1)}\cdot\biggl(&\sum_{w=1}^k{\left(p^{2w-2}(p-1)\cdot x_{p^w}^{p^{k-w}}\right)}+ \\
&\sum_{w=0}^{k-1}{\left(\phi(p^w)p^w\cdot x_1^{p^{k-w}}\prod_{u=1}^w{x_{p^u}^{p^{k-w-1}(p-1)}}\right)}+ \\
&\sum_{w=0}^{k-1}\sum_{1<l\mid{p-1}}{\left(p^k\phi(p^w)\phi(l)\cdot x_1x_l^{\frac{p^{k-w}-1}{\ell}}\prod_{u=1}^w{x_{\ell p^u}^{\frac{p^{k-1-w}(p-1)}{l}}}\right)}\biggr).
\end{align*}
\item The cycle index of $\Hol(\IZ/2\IZ)$ is
\[
\frac{1}{2}x_1^2+\frac{1}{2}x_2.
\]
\item The cycle index of $\Hol(\IZ/4\IZ)$ is
\[
\frac{1}{8}x_1^4+\frac{1}{4}x_1^2x_2+\frac{3}{8}x_2^2+\frac{1}{4}x_4.
\]
\item For $k\geq3$, the cycle index of $\Hol(\IZ/2^k\IZ)$ is
\begin{align*}
\frac{1}{2^{2k-1}}\cdot\biggl(&2^{2k-3}x_{2^k}+\sum_{w=1}^{k-1}{(2^{2w-2}+\phi(2^{w-1})2^{k-1})x_{2^w}^{2^{k-w}}}+ \\
&\sum_{w=0}^{k-2}{(\phi(2^w)2^wx_1^{2^{k-w}}\prod_{u=1}^w{x_{2^u}^{2^{k-1-w}}})}+2^kx_1^2x_2^{2^{k-1}-1}+ \\
&\sum_{w=2}^{k-2}{2^{k+w-2}x_1^2x_2^{2^{k-w}-1}\prod_{u=2}^w{x_{2^u}^{2^{k-1-w}}}}\biggr).
\end{align*}
\end{enumerate}
\end{theorem}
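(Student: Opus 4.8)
The plan is to compute $\CI(\Hol(\IZ/p^k\IZ))=\frac{1}{|\Hol(\IZ/p^k\IZ)|}\sum_A\CT(A)$ directly, where $A$ ranges over the affine permutations $x\mapsto ax+b$ of $\IZ/p^k\IZ$ (so that $|\Hol(\IZ/p^k\IZ)|=\phi(p^k)\cdot p^k$, which equals $p^{2k-1}(p-1)$ for odd $p$ and $2^{2k-1}$ for $p=2$), reading off each $\CT(A)$ from Proposition \ref{primaryCyclicProp}. The two small cases $k=1,2$ for $p=2$ (statements (2) and (3)) I would settle by inspecting all $2$ resp.\ $8$ affine permutations of $\IZ/2\IZ$ resp.\ $\IZ/4\IZ$ one by one. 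The substance is in statements (1) and (4), and both follow the same recipe: group the affine permutations into families according to the rows of Table \ref{table1} resp.\ Table \ref{table2}; observe that within each family the cycle type is governed by a single integer parameter; for each value of that parameter, count the pairs $(a,b)$ realizing it; and finally reindex the resulting sums to match the displayed formulas.

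For statement (1) there are exactly three families. In the family $a\not\equiv 1\pmod p$, the cycle type depends only on $\ord(a)$, which we write as $lp^w$ with $1<l\mid p-1$ and $0\le w\le k-1$; here $b$ is arbitrary (since $1+a+\cdots+a^{\ord(a)-1}=0$), and the number of $a$ of this order is $\phi(l)\phi(p^w)$; this produces the third sum. In the family $a\equiv 1\pmod p$, $\nu_p^{(k)}(b)\ge\nu_p^{(k)}(a-1)$, set $w:=k-\nu_p^{(k)}(a-1)\in\{0,\ldots,k-1\}$; via the bijection $a\leftrightarrow e$, $a=(1+p)^e$, of Lemma \ref{primaryCyclicLem}(1) (under which $\nu_p(a-1)=1+\nu_p(e)$) there are $\phi(p^w)$ admissible values of $a$ and exactly $p^w$ admissible values of $b$ (the multiples of $p^{k-w}$); this produces the second sum. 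In the family $a\equiv 1\pmod p$, $\nu_p^{(k)}(b)<\nu_p^{(k)}(a-1)$, the cycle type depends only on $\aord(b)=p^w$ with $1\le w\le k$; counting the $\phi(p^w)$ values of $b$ with $\nu_p(b)=k-w$ against the $p^{w-1}$ values of $a$ with $\nu_p^{(k)}(a-1)>k-w$ gives $p^{2w-2}(p-1)$ pairs, which is the first sum. As a consistency check, the three counts add up to $p^{2k-1}(p-1)$.

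Statement (4) is handled the same way for $\IZ/2^k\IZ$ with $k\ge3$, now using Lemma \ref{primaryCyclicLem}(2) to parametrize $a=(-1)^{\epsilon}5^e$ and splitting according to the last four rows of Table \ref{table2} (together with the degenerate sub-case $a=1$). The mild extra subtleties here are: the monomials $x_{2^w}^{2^{k-w}}$ for $1\le w\le k-1$ receive contributions from two different rows (from $\epsilon=1$ with $b$ odd, and from $\epsilon=0$ with $\nu_2^{(k)}(b)<\nu_2^{(k)}(a-1)$), which is why their coefficient is a sum of two terms; the monomial $x_{2^k}$ is fed only by $\epsilon=0$ with $b$ odd, so it is pulled out as a separate term whose coefficient does not fit the pattern of the others (because $\epsilon=0$ forces $\nu_2(a-1)\ge2$); and in the family $\epsilon=1$, $2\mid b$, the extremal value of the parameter corresponds to two values of $e$ rather than one.

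The main obstacle is not conceptual but organizational bookkeeping: one must carry out the order-counting in $(\IZ/p^k\IZ)^{\ast}$ correctly, reparametrize each family's sum (from a sum over $\nu_p(a-1)$ or over $\nu_p(e)$ to a sum over the number $w$ of distinct nontrivial cycle lengths), and keep precise track of index ranges and of the degenerate endpoints $a=1$, $e=0$ and empty products, so that the grouped counts line up exactly with the displayed sums. No tool beyond Proposition \ref{primaryCyclicProp}, Lemma \ref{primaryCyclicLem}, and the elementary count of elements of each order in a finite cyclic group is needed.
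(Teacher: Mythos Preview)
Your proposal is correct and follows essentially the same approach as the paper: the paper's proof also groups the affine permutations $x\mapsto ax+b$ according to the rows of Tables~\ref{table1} and~\ref{table2}, introduces the same parameter $w$ in each family (with the same meaning you give it), counts the admissible pairs $(a,b)$ via Lemma~\ref{primaryCyclicLem} and the order-counting in cyclic groups, and handles the small cases $p=2$, $k\le 2$ by direct inspection. The subtleties you flag for statement~(4) --- the two contributions to the coefficient of $x_{2^w}^{2^{k-w}}$, the isolated $x_{2^k}$ term, and the doubled count at the extremal value of $e$ in the family $\epsilon=1$, $2\mid b$ --- are exactly the ones the paper singles out.
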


Before proving Theorem \ref{primaryCyclicTheo}, we comment on the formulas in \cite[Theorems 1.2 and 1.3]{WX93a}. Our formula from Theorem \ref{primaryCyclicTheo}(1), which arises naturally from our argument, is a slight variation of the formula in \cite[Theorem 1.2]{WX93a}. For the reader's convenience (especially because the type-setting in \cite[Theorem 1.2]{WX93a} seems to be a bit off), we display that formula now, replacing $\alpha$ by $k$:
\begin{align*}
\frac{1}{p^{2k-1}(p-1)}\cdot\biggl(&\sum_{w=1}^k{\left(p^{2(w-1)}(p-1)x_{p^w}^{p^{k-w}}\right)}+ \\
&\sum_{w=0}^{k-1}\sum_{l\mid p-1}{p^{w+\delta_{[l>1]}(k-w)}\phi(p^wl)x_1x_l^{\frac{p^{k-w-1}-1}{l}}\cdot\left(\prod_{u=0}^w{x_{p^ul}}\right)^{\frac{p^{k-w-1}(p-1)}{l}}}\biggr)
\end{align*}
The sum in the first line of this formula (with running index $w$ ranging from $1$ to $k$) is the same as the sum in the first line of the formula in Theorem \ref{primaryCyclicTheo}(1). The sum in the second line of Theorem \ref{primaryCyclicTheo}(1) corresponds to the second-line sum of \cite[Theorem 1.2]{WX93a} when $l=1$, and the sum in the third line of Theorem \ref{primaryCyclicTheo}(1) is the part of the second-line sum of \cite[Theorem 1.2]{WX93a} corresponding to $l>1$; note that the exponents of $x_l$ are only seemingly different in the two cases, because the factor of the subsequent product corresponding to $u=0$ (which is absent in our formula) adds to it.

Let us also compare our formula in Theorem \ref{primaryCyclicTheo}(4) with the original one in \cite[Theorem 1.3]{WX93a}. After \enquote{ironing out} a few typesetting issues in the latter, it looks like this:
\begin{align*}
\frac{1}{2^{2k-1}}\cdot\biggl(&2^{2(k-1)}x_{2^k}+\sum_{w=1}^{k-1}{\left(2^{2(w-1)}+\phi(2^{w-1})2^{k-1}\right)x_{2^w}^{2^{k-w}}} \\
&+\sum_{w=0}^{k-2}{\left(\phi(2^w)\left(2^wx_1^{2^{k-w}}+2^{k-1}x_1^2x_2^{2^{k-w-1}-1}\right)\cdot\left(\prod_{u=1}^w{x_{2^u}}\right)^{2^{k-w-1}}\right)}\biggr).
\end{align*}
Attentive readers will notice the difference (by a factor of $2$) between the coefficients of the first terms in this formula and the one from Theorem \ref{primaryCyclicTheo}(4). The correct number of $2^k$-cycles in $\Hol(\IZ/2^k\IZ)$ is the one from our formula, $2^{2k-3}$; a quick way to see this is by applying Knuth's Theorem (see Subsection \ref{subsec4P3}), according to which the affine permutation $x\mapsto ax+b$ of $\IZ/2^k\IZ$ is a $2^k$-cycle if and only if $a\equiv1\Mod{4}$ and $2\nmid b$. This means there are $2^{k-2}$ possibilities for $a$ (the elements of the cyclic subgroup of $(\IZ/2^k\IZ)^{\ast}$ generated by $5$) and $\phi(2^k)=2^{k-1}$ possibilities for $b$, and those can be combined independently with each other. Apart from this small difference, the formulas in \cite[Theorem 1.3]{WX93a} and Theorem \ref{primaryCyclicTheo}(4) match:
\begin{itemize}
\item The second summands in the first lines of both formulas (i.e., the sums with running index $w$ ranging from $1$ to $k-1$) are the same.
\item The first summand in the second line of the formula in Theorem \ref{primaryCyclicTheo}(4) is the \enquote{first half} of the sum in the second line of Wei-Gao-Yang's formula, obtained when \enquote{multiplying out} using distributivity and then splitting the sum in two.
\item When adding the \enquote{lone} term $2^kx_1^2x_2^{2^{k-1}-1}$ in our formula with the sum in the last line of our formula, one obtains the \enquote{second half} of the sum in the second line of Wei-Gao-Yang's formula. Note that compared to their formula, we pulled out the factor for $u=1$ in the subsequent product (because it contributes to the power of $x_2$).
\end{itemize}
We decided to display the formulas the way we do in Theorem \ref{primaryCyclicTheo} because this format makes it easier to read off the coefficient of a given term of the respective cycle index -- note that our formulas always display the monomials in full, and monomials appearing at different places in the same formula are never equal.

The following simple lemma will be useful for the proof of Theorem \ref{primaryCyclicTheo}:

\begin{lemma}\label{simpleLem}
Let $p$ be a prime and $k$ a positive integer. Moreover, let $\ell\in\{0,1,\ldots,k\}$. Then the following hold:
\begin{enumerate}
\item The number of $x\in\{0,1,\ldots,p^k-1\}$ such that $\nu_p^{(k)}(x)=\ell$ is exactly $\phi(p^{k-\ell})$.
\item The number of $x\in\{0,1,\ldots,p^k-1\}$ such that $\nu_p^{(k)}(x)\geq\ell$ is exactly $p^{k-\ell}$.
\end{enumerate}
\end{lemma}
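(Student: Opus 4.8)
The plan is to establish statement (2) first, by directly counting the multiples of $p^{\ell}$ in the given range, and then to deduce statement (1) from (2) by a one-step difference, isolating the boundary case $\ell=k$.

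For (2), I would begin by unwinding the definition $\nu_p^{(k)}(x)=\min\{\nu_p(x),k\}$. Because $\ell\le k$ by hypothesis, the condition $\nu_p^{(k)}(x)\ge\ell$ is equivalent to $\nu_p(x)\ge\ell$; and, recalling the convention $\nu_p(0)=\infty$, this latter condition is in turn equivalent to $p^{\ell}\mid x$ (the case $x=0$ being included). It then remains to count the multiples of $p^{\ell}$ among $0,1,\ldots,p^k-1$; these are exactly $0,p^{\ell},2p^{\ell},\ldots,(p^{k-\ell}-1)p^{\ell}$, so there are $p^{k-\ell}$ of them, which is what (2) asserts.

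For (1), I would split on whether $\ell<k$ or $\ell=k$. When $\ell<k$, we also have $\ell+1\le k$, and for $x$ in the range one has $\nu_p^{(k)}(x)=\ell$ if and only if $\nu_p^{(k)}(x)\ge\ell$ but not $\nu_p^{(k)}(x)\ge\ell+1$; subtracting the two counts provided by (2) gives $p^{k-\ell}-p^{k-\ell-1}=p^{k-\ell-1}(p-1)=\phi(p^{k-\ell})$. When $\ell=k$, the condition $\nu_p^{(k)}(x)=k$ forces $p^k\mid x$, hence $x=0$ in the given range, so the count is $1=\phi(1)=\phi(p^{k-\ell})$.

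This argument is entirely elementary, and I do not anticipate any genuine obstacle. The only point requiring a little care is the boundary value $\ell=k$ together with the element $x=0$ (where $\nu_p$ takes the value $\infty$): the difference argument used for $\ell<k$ invokes part (2) with parameter $\ell+1$, which falls outside $\{0,1,\ldots,k\}$ when $\ell=k$, so that case must be treated directly as above.
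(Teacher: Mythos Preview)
Your proof is correct; the paper itself states this lemma without proof, treating it as an elementary fact, so there is nothing to compare against. Your argument---counting multiples of $p^{\ell}$ for part (2) and then taking differences for part (1), with the boundary case $\ell=k$ handled separately---is exactly the kind of routine verification the authors had in mind.
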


\begin{proof}[Proof of Theorem \ref{primaryCyclicTheo}]
For statement (1): We go through the cases distinguished in Table \ref{table1} and count the number of affine permutations $x\mapsto ax+b$ of a given cycle type in each of them. We follow the order in which the summands appear in our formula, which differs from the order of rows in Table \ref{table1}.
\begin{enumerate}
\item Case: $a\equiv1\Mod{p}$ and $\nu_p^{(k)}(b)<\nu_p^{(k)}(a-1)$. Set $w:=\nu_p(\aord(b))\in\{1,2,\ldots,k\}$. According to the third row of Table \ref{table1}, all affine permutations $x\mapsto ax+b$ corresponding to a fixed value of $w$ have the same cycle type, namely $x_{p^w}^{p^{k-w}}$. To count these permutations, note that $x\mapsto ax+b$ corresponds to $w$ if and only if $\nu_p^{(k)}(b)=k-w$ and $\nu_p^{(k)}(a-1)\geq\nu_p^{(k)}(b)+1=k-w+1$. By Lemma \ref{simpleLem}(1), this means that there are
\[
\phi(p^{k-(k-w)})=\phi(p^w)=p^{w-1}(p-1)
\]
possibilities for $b$, and
\[
p^{k-(k-w+1)}=p^{w-1}
\]
possibilities for $a-1$ (and thus for $a$). The number of affine permutations for fixed $w$ is therefore
\[
p^{w-1}\cdot p^{w-1}(p-1)=p^{2w-2}(p-1),
\]
and noting that $|\Hol(\IZ/p^k\IZ)|=p^{2k-1}(p-1)$, this case accounts for the following contribution to the cycle index:
\[
\frac{1}{p^{2k-1}(p-1)}\sum_{w=1}^k{p^{2w-2}(p-1)x_{p^w}^{p^{k-w}}}.
\]
\item Case: $a\equiv1\Mod{p}$ and $\nu_p^{(k)}(b)\geq\nu_p^{(k)}(a-1)$. Set $w:=\nu_p(\ord(a))\in\{0,1,\ldots,k-1\}$. Writing $a=(1+p)^e$ with $e\in\{0,1,\ldots,p^{k-1}-1\}$, we find by Lemma \ref{primaryCyclicLem}(1) that
\[
w=\nu_p(\ord(a))=k-1-\nu_p^{(k-1)}(e)=k-(1+\nu_p^{(k-1)}(e))=k-\nu_p^{(k)}(a-1),
\]
i.e., that $\nu_p^{(k)}(a-1)=k-w$. By the second row of Table \ref{table1}, this means that for fixed $w$, the corresponding affine permutations all have the cycle type
\[
x_1^{p^{k-w}}\prod_{u=1}^w{x_{p^u}^{p^{k-w-1}(p-1)}}.
\]
Moreover, $x\mapsto ax+b$ corresponds to a fixed $w$ if and only if
\[
\nu_p^{(k)}(a-1)=k-w\text{ and }\nu_p^{(k)}(b)\geq\nu_p^{(k)}(a-1)=k-w.
\]
Hence, by Lemma \ref{simpleLem}, the number of such affine permutations is
\[
\phi(p^{k-(k-w)})\cdot p^{k-(k-w)}=\phi(p^w)p^w,
\]
and this case contributes
\[
\frac{1}{p^{2k-1}(p-1)}\sum_{w=0}^{k-1}{\phi(p^w)p^wx_1^{p^{k-w}}\prod_{u=1}^w{x_{p^u}^{p^{k-w-1}(p-1)}}}
\]
to the cycle index.
\item Case: $a\not\equiv1\Mod{p}$. Write $\ord(a)=l\cdot p^w$ where $l=\ord(a)_{p'}$ is a divisor of $p-1$ with $l>1$, and $w=\nu_p(\ord(a))\in\{0,1,\ldots,k-1\}$. According to the first row of Table \ref{table1}, for fixed $l$ and $w$, all associated affine permutations $x\mapsto ax+b$ have the cycle type
\[
x_1x_l^{\frac{p^{k-w}-1}{l}}\prod_{u=1}^w{x_{lp^u}^{\frac{p^{k-1-w}(p-1)}{l}}}.
\]
For counting those permutations $x\mapsto ax+b$, note that since the unit group $(\IZ/p^k\IZ)^{\ast}$ is cyclic, it has exactly $\phi(lp^w)$ elements of order $lp^w$, so the number of possibilities for $a$ is $\phi(lp^w)$. There are no restrictions on $b$ in this case, and so the number of associated affine permutations is
\[
\phi(lp^w)\cdot p^k=p^k\phi(p^w)\phi(l),
\]
and the overall contribution to the cycle index is
\[
\sum_{w=0}^{k-1}\sum_{1<l\mid p-1}{p^k\phi(p^w)\phi(l)x_1x_l^{\frac{p^{k-w}-1}{l}}\prod_{u=1}^w{x_{lp^u}^{\frac{p^{k-1-w}(p-1)}{l}}}}
\]
\end{enumerate}

Statements (2) and (3) can be verified by some simple computations, which we omit. This leaves us with the proof of statement (4), which we will carry out now. Similarly to the proof of statement (1), we go through the cases distinguished in Table \ref{table2} and count the number of affine permutations $x\mapsto ax+b$ of $\IZ/2^k\IZ$ of a given cycle type, with $a=(-1)^{\epsilon}5^e$ where $\epsilon\in\{0,1\}$ and $e\in\{0,1,\ldots,2^{k-2}-1\}$.
\begin{enumerate}
\item Case: $\epsilon=0$ and $\nu_2^{(k)}(b)<\nu_2^{(k)}(a-1)$. Set $w:=\nu_2^{(k)}(\aord(b))=k-\nu_2^{(k)}(b)\in\{1,2,\ldots,k\}$. By the penultimate row of Table \ref{table2}, all affine permutations $x\mapsto ax+b$ for a fixed $w$ have the cycle type $x_{2^w}^{2^{k-w}}$. To count the number of such permutations, note that since $\nu_2^{(k)}(b)=k-w$, the number of matching values for $b$ is $\phi(2^{k-(k-w)})=\phi(2^w)=2^{w-1}$ by Lemma \ref{simpleLem}(1). Moreover, $\nu_2^{(k)}(a-1)\geq\nu_2^{(k)}(b)+1=k-w+1$. By Lemma \ref{simpleLem}(2), the total number of $x\in\{0,1,\ldots,2^k-1\}$ with $\nu_2^{(k)}(x-1)\geq k-w+1$ is $2^{k-(k-w+1)}=2^{w-1}$. However, we only want those $x$ that are also of the form $5^{e'}$ for some $e'\in\{0,1,\ldots,2^{k-2}-1\}$. If $w<k$, then all $x$ with $\nu_2^{(k)}(x-1)\geq k-w+1>1$ are of that form, because by Lemma \ref{primaryCyclicLem}(2), otherwise $\nu_2^{(k)}(x-1)=1$. Hence, as long as $w<k$, the number of affine permutations with $w$ fixed is $2^{2w-2}$. However, if $w=k$, then the condition $\nu_2^{(k)}(a-1)\geq k-w+1=1$ is satisfied by all units $a$. Since we only want to count those that are of the form $5^{e'}$, we conclude that the number of matching $a$ is $2^{k-2}$, and the number of matching affine permutations is $2^{k+w-3}=2^{2k-3}$. Noting that $|\Hol(\IZ/2^k\IZ)|=2^{2k-1}$, we obtain that the total contribution of this case to the cycle index is
\[
\frac{1}{2^{2k-1}}\left(2^{2k-3}x_{2^k}+\sum_{w=1}^{k-1}{2^{2w-2}x_{2^w}^{2^{k-w}}}\right).
\]
\item Case: $\epsilon=1$ and $2\nmid b$. Set $v:=\nu_2^{(k)}(b)\geq\nu_2^{(k)}(a-1)$. By the last row of Table \ref{table2}, for fixed $v$, all affine permutations $x\mapsto ax+b$ have the cycle type $x_{2^{k-1-v}}^{2^{1+v}}$. Moreover, the number of matching $b$ is $2^{k-1}$, and the number of matching $a=-5^e$ is $\phi(2^{k-2-v})$. The overall contribution of this case to the cycle index is
\[
\frac{1}{2^{2k-1}}\sum_{v=0}^{k-2}{\phi(2^{k-2-v})2^{k-1}x_{2^{k-1-v}}^{2^{1+v}}}=\frac{1}{2^{2k-1}}\sum_{w=1}^{k-1}{\phi(2^{w-1})2^{k-1}x_{2^w}^{2^{k-w}}},
\]
where the equality is achieved through the substitution $w:=k-1-v$.
\item Case: $\epsilon=0$ and $\nu_2^{(k)}(b)\geq\nu_2^{(k)}(a-1)$. Set $v:=\nu_2^{(k-2)}(e)\in\{0,1,\ldots,k-2\}$. By the last row of Table \ref{table2}, for fixed $v$, all affine permutations $x\mapsto ax+b$ have the cycle type $x_1^{2^{2+v}}\prod_{u=1}^{k-2-v}{x_{2^u}^{2^{1+v}}}$. Moreover, by Lemma \ref{primaryCyclicLem}(2), $\nu_2^{(k)}(a-1)=2+v>1$, so the number of matching $a$ is $\phi(2^{k-(v+2)})=\phi(2^{k-v-2})$ by Lemma \ref{simpleLem}(1), whereas the number of matching $b$ is $2^{k-v-2}$ by Lemma \ref{simpleLem}(2). The overall contribution to the cycle index is
\begin{align*}
&\frac{1}{2^{2k-1}}\sum_{v=0}^{k-2}{\phi(2^{k-2-v})2^{k-2-v}x_1^{2^{2+v}}\prod_{u=1}^{k-2-v}{x_{2^u}^{2^{1+v}}}} \\
&=\frac{1}{2^{2k-1}}\sum_{w=0}^{k-2}{\phi(2^w)2^wx_1^{2^{k-w}}\prod_{u=1}^w{x_{2^u}^{2^{k-1-w}}}},
\end{align*}
where the equality is achieved through the substitution $w:=k-2-v$.
\item Case: $\epsilon=1$ and $2\mid b$. Set $v:=\nu_2^{(k-3)}(e)\in\{0,1,\ldots,k-3\}$. By the third-to-last row of Table \ref{table2}, all affine permutations $x\mapsto ax+b$ for a fixed $v$ have the cycle type $x_1^2x_2^{2^{2+v}-1}\prod_{u=2}^{k-2-v}{x_{2^u}^{2^{1+v}}}$. In order to count these permutations, we distinguish two subcases:
\begin{itemize}
\item If $v<k-3$, then $v=\nu_2^{(k-2)}(e)$, and by Lemma \ref{simpleLem}(1), the number of matching $e$ (and hence the number of matching $a$) is $\phi(2^{k-2-v})=2^{k-3-v}$, whereas the number of matching $b$ is $2^{k-1}$.
\item If $v=k-3$, then $\nu_2^{(k-2)}(e)\in\{k-3,k-2\}$, and the associated values of $e$ are $2^{k-3}$ and $0$. Therefore, there are $2$ matching values for $a$, and still $2^{k-1}$ matching values for $b$.
\end{itemize}
In total, we obtain the following contribution of this case to the cycle index:
\begin{align*}
&\frac{1}{2^{2k-1}}\left(2^kx_1^2x_2^{2^{k-1}-1}+\sum_{v=0}^{k-4}{x_1^2x_2^{2^{2+v}-1}\prod_{u=2}^{k-2-v}{x_{2^u}^{2^{1+v}}}}\right)= \\
&\frac{1}{2^{2k-1}}\left(2^kx_1^2x_2^{2^{k-1}-1}+\sum_{w=2}^{k-2}{x_1^2x_2^{2^{k-w}-1}\prod_{u=2}^w{x_{2^u}^{2^{k-1-w}}}}\right),
\end{align*}
where the equality is achieved through the substitution $w:=k-2-v$.
\end{enumerate}
\end{proof}

\begin{example}\label{hol12Ex}
As an application of the theory presented in this section, we compute the cycle index of $\Hol(\IZ/12\IZ)$. By Theorem \ref{primaryCyclicTheo}(1),
\[
\CI(\Hol(\IZ/3\IZ))=\frac{1}{6}(2x_3+x_1^3+3x_1x_2)=\frac{1}{6}x_1^3+\frac{1}{2}x_1x_2+\frac{1}{3}x_3=\CI(\Sym(3)),
\]
and by Theorem \ref{primaryCyclicTheo}(3),
\[
\CI(\Hol(\IZ/4\IZ))=\frac{1}{8}x_1^4+\frac{1}{4}x_1^2x_2+\frac{3}{8}x_2^2+\frac{1}{4}x_4.
\]
By Proposition \ref{crtProp},
\[
\CI(\Hol(\IZ/12\IZ))=\CI(\Hol(\IZ/3\IZ))\divideontimes\CI(\Hol(\IZ/4\IZ)).
\]
Table \ref{hol12Table} lists all possible $\divideontimes$-products of a term $\tau_1$ of $\CI(\Hol(\IZ/3\IZ))$ with a term $\tau_2$ of $\CI(\Hol(\IZ/4\IZ))$ (cf.~Table \ref{mathrefTable2} in Example \ref{permProductEx}).

\begin{table}[h]\centering
\begin{tabular}{|c|c|c|c|}\hline
\diagbox{$\tau_2$}{$\tau_1$} & $\frac{1}{6}x_1^3$ & $\frac{1}{2}x_1x_2$ & $\frac{1}{3}x_3$ \\ \hline
$\frac{1}{8}x_1^4$ & $\frac{1}{48}x_1^{12}$ & $\frac{1}{16}x_1^4x_2^4$ & $\frac{1}{24}x_3^4$ \\ \hline
$\frac{1}{4}x_1^2x_2$ & $\frac{1}{24}x_1^6x_2^3$ & $\frac{1}{8}x_1^2x_2^5$ & $\frac{1}{12}x_3^2x_6$ \\ \hline
$\frac{3}{8}x_2^2$ & $\frac{1}{16}x_2^6$ & $\frac{3}{16}x_2^6$ & $\frac{1}{8}x_6^2$ \\ \hline
$\frac{1}{4}x_4$ & $\frac{1}{24}x_4^3$ & $\frac{1}{8}x_4^3$ & $\frac{1}{12}x_{12}$ \\ \hline
\end{tabular}
\caption{$\divideontimes$-products of terms $\tau_1$ of $\CI(\Hol(\IZ/3\IZ))$ with terms $\tau_2$ of $\CI(\Hol(\IZ/4\IZ))$.}
\label{hol12Table}
\end{table}

Based on this, we conclude that
\begin{align*}
\CI(\Hol(\IZ/12\IZ))=&\frac{1}{48}x_1^{12}+\frac{1}{24}x_1^6x_2^3+\frac{1}{16}x_1^4x_2^4+\frac{1}{8}x_1^2x_2^5+\frac{1}{4}x_2^6+\frac{1}{24}x_3^4+\frac{1}{12}x_3^2x_6+ \\
&\frac{1}{6}x_4^3+\frac{1}{8}x_6^2+\frac{1}{12}x_{12}.
\end{align*}
\end{example}

\section{Some other helpful results}\label{sec4}

\subsection{Conjugacy in wreath products}\label{subsec4P1}

Conjugacy in wreath products was characterized by James and Kerber in \cite[Theorem 4.2.8, p.~141]{JK81a}. Here, we recall the formulation of their result given in \cite[Subsection 2.2]{Bor19a}.

\begin{deffinition}\label{bcpcDef}
Let $G$ be a group, $d\in\IN^+$, and let $\vec{g}=\sigma(g_1,\ldots,g_d)\in G\wr\Sym(d)$.

\begin{enumerate}
\item For a length $\ell$ cycle $\zeta=(i_1,\ldots,i_{\ell})$ of $\sigma$, we define $\fcpc_{\zeta}(\vec{g}):=(g_{i_1}g_{i_2}\cdots g_{i_{\ell}})^G$, the \emph{forward cycle product class of $\vec{g}$ with respect to $\zeta$}, a $G$-conjugacy class.
\item For $\ell\in\{1,\ldots,n\}$, we denote by $M_{\ell}(\vec{g})$ the (possibly empty) multiset of the $\fcpc_{\zeta}(\vec{g})$, where $\zeta$ runs through the length $\ell$ cycles of $\sigma$.
\end{enumerate}
\end{deffinition}

\begin{lemmma}\label{wpcLem}(cf.~\cite[Lemma 2.2.5]{Bor19a})
Let $G$ be a group, let $d\in\IN^+$, let
\[
g_1,\ldots,g_d,h_1,\ldots,h_d\in G
\]
and $\sigma,\upsilon\in\Sym(d)$. The following are equivalent:

\begin{enumerate}
\item The two elements $\vec{g}:=\sigma(g_1,\ldots,g_d)$ and $\vec{h}:=\upsilon(h_1,\ldots,h_d)$ of $G\wr\Sym(d)$ are conjugate.
\item $\sigma$ and $\upsilon$ have the same cycle type (i.e., are conjugate in $\Sym(d)$), and for all $\ell\in\{1,\ldots,d\}$, the equality of multisets $M_{\ell}(\vec{g})=M_{\ell}(\vec{h})$ holds.
\end{enumerate}
\end{lemmma}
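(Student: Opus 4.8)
The plan is to reduce everything to two elementary conjugation computations in $G\wr\Sym(d)$ and then to a per-cycle system of equations in $G$, essentially following the standard treatment of conjugacy in wreath products. Write elements of $G\wr\Sym(d)$ as pairs $\sigma(g_1,\ldots,g_d)$ as in Definition \ref{bcpcDef}, and note that the group is generated by its \emph{base group} $B=\{\id(x_1,\ldots,x_d):x_i\in G\}$ and its \emph{top group} $T=\{\tau(1_G,\ldots,1_G):\tau\in\Sym(d)\}$, because $\sigma(g_1,\ldots,g_d)=\sigma(1_G,\ldots,1_G)\cdot\id(g_1,\ldots,g_d)$. Hence every conjugation in $G\wr\Sym(d)$ is a composite of conjugations by elements of $B$ and of $T$. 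A direct calculation from the multiplication rule of Definition \ref{impWreathDef} gives: conjugation by $\id(x_1,\ldots,x_d)\in B$ leaves the permutation part $\sigma$ unchanged and replaces each $g_i$ by $x_{\sigma^{-1}(i)}^{-1}g_ix_i$; conjugation by $\tau(1_G,\ldots,1_G)\in T$ sends $\sigma(g_1,\ldots,g_d)$ to $(\tau^{-1}\sigma\tau)(g_{\tau^{-1}(1)},\ldots,g_{\tau^{-1}(d)})$.

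The point is that both operations preserve, in a strong sense, the forward cycle product classes. For the base-group operation: along a cycle $\zeta=(i_1,\ldots,i_\ell)$ of $\sigma$ the replacement $g_{i_t}\mapsto x_{\sigma^{-1}(i_t)}^{-1}g_{i_t}x_{i_t}$ makes the forward product telescope, $\prod_{t=1}^\ell x_{\sigma^{-1}(i_t)}^{-1}g_{i_t}x_{i_t}=x_{i_\ell}^{-1}(g_{i_1}\cdots g_{i_\ell})x_{i_\ell}$, so $\fcpc_\zeta$ is unchanged as a conjugacy class. For the top-group operation: the cycles of $\tau^{-1}\sigma\tau$ are precisely the $\tau$-images $\zeta^\tau$ of the cycles $\zeta$ of $\sigma$, of the same length, and substitution shows that $\fcpc_{\zeta^\tau}$ of the conjugate equals $\fcpc_\zeta$ of the original. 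Since both operations also preserve the cycle type of the permutation part (the projection $G\wr\Sym(d)\to\Sym(d)$ is a homomorphism), implication $(1)\Rightarrow(2)$ follows at once: a conjugacy between $\vec g$ and $\vec h$ is a composite of such operations, so it induces a length-preserving bijection of cycles carrying $\fcpc$-classes along, whence $\sigma$ and $\upsilon$ have the same cycle type and $M_\ell(\vec g)=M_\ell(\vec h)$ for every $\ell$.

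For $(2)\Rightarrow(1)$ I would proceed in three steps. First, since $\sigma$ and $\upsilon$ are conjugate in $\Sym(d)$, conjugating $\vec h$ by a suitable element of $T$ reduces to the case where $\vec g$ and $\vec h$ have the same permutation part $\sigma$; by the invariance just proved, the hypothesis $M_\ell(\vec g)=M_\ell(\vec h)$ is unaffected. Second, I use the standard description of the centralizer $C_{\Sym(d)}(\sigma)$: for each $\ell$ it acts as the full symmetric group on the set of length-$\ell$ cycles of $\sigma$ (it suffices to realize a transposition of two $\ell$-cycles, achieved by the explicit permutation matching them entry-by-entry in cyclic order). Conjugating $\vec h$ by such a $\tau'\in T$ keeps the permutation part $\sigma$ and permutes the $\fcpc$-classes along with the cycles; since $M_\ell(\vec g)=M_\ell(\vec h)$ as multisets for every $\ell$, we may choose $\tau'$ so that after this conjugation $\fcpc_\zeta(\vec g)=\fcpc_\zeta(\vec h)$ for \emph{every} cycle $\zeta$ of $\sigma$. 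Third, we seek $\vec x=\id(x_1,\ldots,x_d)\in B$ with $\vec x^{-1}\vec g\vec x=\vec h$; by the first computation this is the system $g_ix_i=x_{\sigma^{-1}(i)}h_i$ for $i=1,\ldots,d$. On a cycle $\zeta=(j_1,\ldots,j_\ell)$ one may pick $x_{j_1}\in G$ arbitrarily, which then forces $x_{j_t}=(g_{j_2}\cdots g_{j_t})^{-1}x_{j_1}(h_{j_2}\cdots h_{j_t})$ for $t\ge 2$; the single remaining equation, obtained by closing up the cycle, rearranges to $x_{j_1}^{-1}(g_{j_2}\cdots g_{j_\ell}g_{j_1})x_{j_1}=h_{j_2}\cdots h_{j_\ell}h_{j_1}$. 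The two sides are cyclic rotations of representatives of $\fcpc_\zeta(\vec g)$ and $\fcpc_\zeta(\vec h)$, so by Step two they lie in the same conjugacy class and a valid $x_{j_1}$ exists; carrying this out on every cycle assembles the desired $\vec x$, proving that $\vec g$ and $\vec h$ are conjugate.

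The main obstacle is the third step: isolating the telescoping structure of the system $g_ix_i=x_{\sigma^{-1}(i)}h_i$ one cycle at a time and checking that its only obstruction to solvability is exactly the equality of forward cycle product classes — which requires keeping careful track of base points and cyclic rotations under the paper's conventions (right actions, permutations written to the left of their tuples, and the multiplication rule of Definition \ref{impWreathDef}). Step two rests on the classical structure of centralizers in symmetric groups, which should be stated with care, and everything in $(1)\Rightarrow(2)$ is routine verification once the two conjugation formulas are in hand.
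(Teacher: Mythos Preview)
The paper does not actually prove Lemma~\ref{wpcLem}: it is stated there as a citation of the classical James--Kerber characterization of conjugacy in wreath products (via \cite[Lemma 2.2.5]{Bor19a} and \cite[Theorem 4.2.8]{JK81a}), followed only by the remark that the switch from backward to forward cycle products reflects the paper's right-action conventions. So there is no in-paper proof to compare against.

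Your argument is correct and is essentially the standard one. The decomposition of an arbitrary conjugation into base-group and top-group conjugations, the telescoping check that each $\fcpc_\zeta$ is preserved, the reduction in $(2)\Rightarrow(1)$ via the centralizer $C_{\Sym(d)}(\sigma)$ to align forward cycle product classes cycle-by-cycle, and the per-cycle solution of the system $g_ix_i=x_{\sigma^{-1}(i)}h_i$ are all carried out accurately under the paper's multiplication rule from Definition~\ref{impWreathDef}. The one place a reader might want an extra sentence is your closing appeal to ``cyclic rotations'': you are using that $g_{j_1}\cdots g_{j_\ell}$ and $g_{j_2}\cdots g_{j_\ell}g_{j_1}$ are $G$-conjugate (immediate from $a^{-1}(ab)a=ba$), so that $\fcpc_\zeta$ is well-defined independently of the starting index on $\zeta$ and the final equation is solvable once the classes agree. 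For what it is worth, the paper \emph{does} carry out a closely analogous argument in detail for the subgroup $W_=(d,m)$ in the proof of Lemma~\ref{wpcLem2}, and your Step-three construction mirrors that proof.
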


We note that the differences between the results as stated here and their versions in \cite[Subsection 2.2]{Bor19a} (where \enquote{backward cycle products} were used instead) is due to us working with right actions here.

\subsection{Conjugacy in holomorphs of finite cyclic groups}\label{subsec4P2}

Consider the finite cyclic group $\IZ/m\IZ$. Recall that by Proposition \ref{crtProp}, if $m=p_1^{k_1}\cdots p_r^{k_r}$ is the factorization of $m$ into pairwise coprime prime powers, then
\[
\Hol(\IZ/m\IZ)\cong\prod_{t=1}^r{\Hol(\IZ/p_t^{k_t}\IZ)}
\]
as permutation groups. Since conjugacy in direct products (of abstract groups) is characterized by component-wise conjugacy, it suffices to consider the case $m=p^k$ for some prime $p$ and some $k\in\IN^+$.

Write $\Hol(\IZ/p^k\IZ)=(\IZ/p^k\IZ)^{\ast}\ltimes\IZ/p^k\IZ$. Let $(a,x),(b,y)\in\Hol(\IZ/p^k\IZ)$. A necessary condition for these two pairs to be conjugate in $\Hol(\IZ/p^k\IZ)$ is that $a$ and $b$ are conjugate in the abelian group $(\IZ/p^k\IZ)^{\ast}$, i.e., that $a=b$. The problem thus is to determine for which $x,y\in\IZ/p^k\IZ$ it is the case that the pairs $(a,x),(a,y)\in\Hol(\IZ/p^k\IZ)$ are conjugate.

Let $(c,z)\in\Hol(\IZ/p^k\IZ)$. Then
\begin{align*}
(a,x)^{(c,z)} &=(c,z)^{-1}(a,x)(c,z)=(c^{-1},-c^{-1}z)(a,x)(c,z)=(c^{-1}a,-c^{-1}az+x)(c,z) \\
&=(c^{-1}ac,-c^{-1}acz+cx+z)=(a,(1-a)z+cx).
\end{align*}
Hence $(a,x)$ and $(a,y)$ are conjugate in $\Hol(\IZ/p^k\IZ)$ if and only if $y\in\{(1-a)z+cx\mid z\in\IZ/p^k\IZ,c\in(\IZ/p^k\IZ)^{\ast}\}$. We distinguish two cases:
\begin{enumerate}
\item Case: $x\in(1-a)\IZ/p^k\IZ$. Then $\{(1-a)z+cx\mid z\in\IZ/p^k\IZ,c\in(\IZ/p^k\IZ)^{\ast}\}=(1-a)\IZ/p^k\IZ$.
\item Case: $x\notin(1-a)\IZ/p^k\IZ$. Then since the subgroup lattice of $\IZ/p^k\IZ$ is a line, one has that $(1-a)\IZ/p^k\IZ$ is a proper subgroup of $\langle x\rangle$. In particular, since $cx$ is a generator of $\langle x\rangle$ but $(1-a)z$ is a non-generator of $\langle x\rangle$, each sum $(1-a)z+cx$ is a generator of $\langle x\rangle$, so $\{(1-a)z+cx\mid z\in\IZ/p^k\IZ,c\in(\IZ/p^k\IZ)^{\ast}\}=\langle x\rangle\setminus p\langle x\rangle$.
\end{enumerate}
In particular, the number of conjugacy classes of elements of $\Hol(\IZ/p^k\IZ)$ of the form $(a,x)$ for a fixed $a\in(\IZ/p^k\IZ)^{\ast}$ equals $\log_p{|(\IZ/p^k\IZ):(1-a)(\IZ/p^k\IZ)|}$ (for example, if the multiplication by $a$ is fixed-point-free, then there is precisely one such conjugacy class).

\subsection{Knuth's Theorem}\label{subsec4P3}

The following important theorem characterizes when a linear congruential generator with modulus $m$ achieves the maximum period length of $m$:

\begin{theoremm}\label{knuthTheo}(Knuth, \cite[Section 3.2.1.2, Theorem A]{Knu98a})
Let $m\in\IN^+$, and let $a,b\in\IZ$ with $\gcd(a,m)=1$. Denote by $\rad(m)$ the squarefree radical of $m$ (the product of the primes dividing $m$), and set
\[
\rad'(m):=\begin{cases}\rad(m), & \text{if }4\nmid m, \\ 2\cdot\rad(m), & \text{if }4\mid m.\end{cases}
\]
The affine permutation $\lambda(a,b):\IZ/m\IZ\rightarrow\IZ/m\IZ$, $x\mapsto ax+b$, is an $m$-cycle if and only if $a\equiv1\Mod{\rad'(m)}$ and $\gcd(b,m)=1$.
\end{theoremm}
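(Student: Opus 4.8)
The plan is to derive Knuth's Theorem from two facts already established in this paper: the Chinese Remainder decomposition of $\Hol(\IZ/m\IZ)$ from Proposition~\ref{crtProp}, and the explicit list of cycle types of affine permutations of primary cyclic groups in Proposition~\ref{primaryCyclicProp} (Tables~\ref{table1} and~\ref{table2}). So the strategy is: first reduce the problem to prime-power moduli, then read off the answer case by case from the tables.

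\emph{Step 1: reduction to prime powers.} Write $m=p_1^{k_1}\cdots p_r^{k_r}$ with distinct primes $p_i$. By Proposition~\ref{crtProp}, $\lambda(a,b)$ on $\IZ/m\IZ$ corresponds under the CRT isomorphism to the product permutation whose $i$-th component is $\lambda(a,b)$ on $\IZ/p_i^{k_i}\IZ$. Because the sets $\IZ/p_i^{k_i}\IZ$ have pairwise coprime cardinalities, such a product permutation is a single cycle of full length if and only if each component is: the projection to the $i$-th coordinate of the orbit of a point is exactly an orbit of the $i$-th component, so transitivity of the product forces transitivity of each component; conversely, if every component is a full cycle then every orbit of the product has size $\lcm_i(p_i^{k_i})=m$. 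Likewise $\gcd(b,m)=1$ iff $\gcd(b,p_i^{k_i})=1$ for all $i$, and a short case check on $\nu_2(m)\in\{0,1\}$ versus $\nu_2(m)\geq2$ shows $\rad'(m)=\prod_i\rad'(p_i^{k_i})$ with pairwise coprime factors, so $a\equiv1\pmod{\rad'(m)}$ iff $a\equiv1\pmod{\rad'(p_i^{k_i})}$ for all $i$. Hence it suffices to treat $m=p^k$, where $\rad'(p^k)=p$ for $p$ odd, $\rad'(2)=2$, and $\rad'(2^k)=4$ for $k\geq2$.

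\emph{Step 2: the odd prime-power case.} Let $p$ be odd, $m=p^k$, and go through the three rows of Table~\ref{table1}. When $a\not\equiv1\pmod p$, the cycle type contains both $x_1^1$ and a factor $x_{\ord(a)_{p'}}^{e}$ with $\ord(a)_{p'}>1$ (by Lemma~\ref{primaryCyclicLem}(1)) and $e\geq1$ (since $\ord(a)_{p'}\mid p-1\mid p^{k-\nu_p(\ord(a))}-1$ and $\nu_p(\ord(a))<k$), so there are at least two distinct cycle lengths and $\lambda(a,b)$ is never a $p^k$-cycle. When $a\equiv1\pmod p$ and $\nu_p^{(k)}(b)\geq\nu_p^{(k)}(a-1)$, the cycle type contains $x_1^{p^{\nu_p^{(k)}(a-1)}}$ with exponent $\geq p\geq3$, so again not a single cycle. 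When $a\equiv1\pmod p$ and $\nu_p^{(k)}(b)<\nu_p^{(k)}(a-1)$, the cycle type is $x_{\aord(b)}^{p^k/\aord(b)}$, which is a single $p^k$-cycle exactly when $\aord(b)=p^k$, i.e.\ $\gcd(b,p^k)=1$; and once $\gcd(b,p^k)=1$ (so $\nu_p^{(k)}(b)=0$), the defining inequality of this row is equivalent to $a\equiv1\pmod p$. This gives precisely the criterion $a\equiv1\pmod{\rad'(p^k)}$ and $\gcd(b,p^k)=1$.

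\emph{Step 3: the $2$-power case.} For $m=2$ and $m=4$ one inspects the first four rows of Table~\ref{table2} directly (for $m=4$, $a\equiv1\pmod4$ forces $a=1$; for $m=2$ the congruence is vacuous). For $m=2^k$ with $k\geq3$, write $a=(-1)^\epsilon5^e$ and note that $\epsilon=0$ means $a\equiv1\pmod4$ and $\epsilon=1$ means $a\equiv3\pmod4$. In every row with $\epsilon=1$, and in the row with $\epsilon=0$ and $\nu_2^{(k)}(b)\geq\nu_2^{(k)}(a-1)$, the displayed cycle type contains a variable raised to an exponent $\geq2$, so $\lambda(a,b)$ is never a $2^k$-cycle (and in the latter row $b$ is forced even, consistent with $\gcd(b,2^k)\neq1$). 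The only surviving row is $\epsilon=0$ with $\nu_2^{(k)}(b)<\nu_2^{(k)}(a-1)$, of cycle type $x_{\aord(b)}^{2^k/\aord(b)}$; this is a single $2^k$-cycle exactly when $b$ is odd, i.e.\ $\gcd(b,2^k)=1$, and with $b$ odd the row's condition holds exactly when $\epsilon=0$, i.e.\ $a\equiv1\pmod{\rad'(2^k)}$. Together with Step~1 this proves the theorem. I expect no single step to be deep; the real work is the bookkeeping in Step~3, where Table~\ref{table2} splits into many subcases and one must track carefully the small cases $k=1,2$ and the dictionary between the parameter $\epsilon$ and congruences modulo $4$.
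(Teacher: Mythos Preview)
Your argument is correct. A few clarifying remarks: in Step~2, row~1, the key inequality $(p^{k-\nu_p(\ord(a))}-1)/\ord(a)_{p'}\geq1$ holds because $\nu_p(\ord(a))\leq k-1$ (as $\ord(a)\mid\phi(p^k)$) and $\ord(a)_{p'}\mid p-1$, so your ``at least two distinct cycle lengths'' claim goes through. In Step~3, the shortcut ``contains a variable raised to an exponent $\geq2$'' is indeed sufficient to rule out a single $m$-cycle, since the cycle type of an $m$-cycle is the monomial $x_m^1$; this is implicit but worth saying.

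On the comparison with the paper: the paper does \emph{not} prove Theorem~\ref{knuthTheo} at all; it simply cites Knuth's book. Your approach is therefore not a variant of the paper's proof but a genuine addition: you derive Knuth's criterion from the paper's own Proposition~\ref{crtProp} and the cycle-type tables in Proposition~\ref{primaryCyclicProp}, neither of which depends on Knuth's Theorem. This makes the paper's treatment of $m$-cycles in $\Hol(\IZ/m\IZ)$ self-contained, which is a nice bonus. The trade-off is that your route is longer than quoting Knuth, and it leans on the rather intricate case analysis encoded in Tables~\ref{table1} and~\ref{table2}; Knuth's original proof, by contrast, argues directly with the period of the sequence $(1+a+\cdots+a^{n-1})\bmod m$ and does not require knowing the full cycle structure of every affine permutation.
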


Observe that for every generator $b$ of $\IZ/m\IZ$, one has
\[
\{(1-a)z+cb: z\in\IZ/m\IZ,c\in(\IZ/m\IZ)^{\ast}\}=(\IZ/m\IZ)^{\ast},
\]
the set of all generators of $\IZ/m\IZ$. Indeed, if $u\in(\IZ/m\IZ)^{\ast}$, then $u=(1-a)0+(ub^{-1})b$, and conversely, viewing $a$, $z$, $c$ and $b$ as integers, one has $\gcd((1-a)z+cb,m)=1$ because $\gcd(cb,m)=1$ and $\rad(m)\mid(1-a)z$. Therefore, by what was said in the previous subsection, the affine permutations $\lambda(a,1):\IZ/m\IZ\rightarrow\IZ/m\IZ$, $x\mapsto ax+1$, where $a$ ranges over the set $\{1+k\cdot\rad'(m)\mid k=0,1,\ldots,\frac{m}{\rad'(m)}-1\}$, form a complete system of representatives for the conjugacy classes of $m$-cycles in $\Hol(\IZ/m\IZ)$.

\subsection{Conjugacy classes of involutions in holomorphs of finite cyclic groups}\label{subsec4P4}

Similarly to how the conjugacy classes of $m$-cyles in $\Hol(\IZ/m\IZ)$ were described at the end of the previous subsection, in this subsection, we will give a description of a complete set of representatives for the conjugacy classes of involutions in $\Hol(\IZ/m\IZ)$. It will be convenient to define an involution in a group $G$ as an element $g\in G$ such that $g^2=1_G$, so that the neutral element $1_G$ is also an involution.

As in Subsection \ref{subsec4P2}, it suffices to describe the conjugacy classes of involutions in $\Hol(\IZ/p^k\IZ)$; the effective Chinese Remainder Theorem can then be used to obtain a complete list of involution conjugacy class representatives of involutions in $\Hol(\IZ/m\IZ)$ for composite $m$ (see the end of this subsection for more details and an example).

As for involutions in $\Hol(\IZ/p^k\IZ)$, we first consider the case $p>2$. Note that in order for $\lambda(a,b)$ to be an involution, $a$ must be an involution in the unit group $(\IZ/p^k\IZ)^{\ast}$. Since the unit group is cyclic of even order, it has exactly two involutions, namely $1$ and $-1$. For $a=1$, where $\lambda(a,b)$ belongs to the regular representation of $\IZ/p^k\IZ$ on itself, we find that $\ord(\lambda(a,b))=\aord(b)$, so the only way for this to be an involution is when $b=0$, thus yielding the trivial involution $\lambda(1,0)$. For $a=-1$, we find that $\lambda(a,b)^2=\lambda(a^2,b(1+a))=\lambda(1,0)$ for all $b\in\IZ/p^k\IZ$, so every choice of $b$ corresponds to an involution. However, since $a-1=-2$ is coprime to $p$ (or, equivalently, the multiplication by $a$ is fixed-point-free), all choices of $b$ lead to the same conjugacy class (see the very end of Subsection \ref{subsec4P2}), so we may choose $\lambda(-1,0)$ as a conjugacy class representative.

We now consider the case $p=2$. For $k=1$, we have $\Hol(\IZ/2^k\IZ)=\Hol(\IZ/2\IZ)\cong\IZ/2\IZ$, so there are two conjugacy classes of involutions, each consisting of only one element, namely $\lambda(1,0)$ and $\lambda(1,1)$ respectively. For $k=2$, there are two involutions in $(\IZ/2^k\IZ)^{\ast}=(\IZ/4\IZ)^{\ast}\cong\IZ/2\IZ$, namely $a=\pm1$. For $a=1$, we find two (singleton) conjugacy classes of involutions, consisting of $\lambda(1,0)$ and $\lambda(1,2)$ respectively. And for $a=-1$, there are also two conjugacy classes of involutions, spanned by $\lambda(-1,0)$ and $\lambda(-1,1)$ respectively.

We may thus now assume that $p=2$ and $k\geq3$. Then since $(\IZ/2^k\IZ)^{\ast}\cong\IZ/2^{k-2}\IZ\times\IZ/2\IZ$, there are four involutions in the unit group, and they are represented by $1$, $-1$, and $2^{k-1}\pm1$. We go through these four values for $a$ and describe the associated conjugacy classes.
\begin{itemize}
\item For $a=1$, there are two conjugacy classes, with representatives $\lambda(1,0)$ and $\lambda(1,2^{k-1})$.
\item For $a=-1$, since $a-1\equiv2\Mod{2^k}$ and $b$ may be chosen arbitrarily to get an involution, we find that there are also two conjugacy classes, with representatives $\lambda(-1,0)$ and $\lambda(-1,1)$.
\item For $a=2^{k-1}-1$, we have $\lambda(a,b)^2=\lambda(1,b(1+a))=\lambda(1,2^{k-1}b)$, so we see that $\lambda(a,b)$ is an involution if and only if $b$ is even. Since $a-1\equiv2\Mod{2^k}$, this implies that $b$ lies in the (additive) subgroup of $\IZ/2^k\IZ$ generated by $a-1$, so there is exactly one conjugacy class, represented by $\lambda(2^{k-1}-1,0)$.
\item For $a=2^{k-1}+1$, we have $\lambda(a,b)^2=\lambda(1,b(1+a))=\lambda(1,2(2^{k-2}+1)b)$. Hence $\lambda(a,b)$ is an involution if and only if $b\equiv2^{k-1}\Mod{2^k}$. Moreover, $a-1=2^{k-1}$, so again, $b$ necessarily lies in the subgroup generated by $a-1$, whence there is exactly one conjugacy class, represented by $\lambda(2^{k-1}+1,0)$.
\end{itemize}

In summary, we obtain the following lemma:

\begin{lemmma}\label{involConjLem}
Let $p$ be a prime, and let $k$ be a positive integer.
\begin{enumerate}
\item If $p>2$, then there are exactly two conjugacy classes of involutions in the holomorph $\Hol(\IZ/p^k\IZ)$, with representatives $\lambda(1,0)$ and $\lambda(-1,0)$.
\item There are exactly two conjugacy classes of involutions in $\Hol(\IZ/2\IZ)$, with representatives $\lambda(1,0)$ and $\lambda(1,1)$.
\item There are exactly four conjugacy classes of involutions in $\Hol(\IZ/4\IZ)$, with representatives $\lambda(1,0)$, $\lambda(1,2)$, $\lambda(-1,0)$ and $\lambda(-1,1)$.
\item For $k\geq3$, there are exactly six conjugacy classes of involutions in $\Hol(\IZ/2^k\IZ)$, with representatives $\lambda(1,0)$, $\lambda(1,2^{k-1})$, $\lambda(-1,0)$, $\lambda(-1,1)$, $\lambda(2^{k-1}-1,0)$ and $\lambda(2^{k-1}+1,0)$.
\end{enumerate}
In particular, for an arbitrary modulus $m\in\IN^+$, the number of conjugacy classes of involutions in $\Hol(\IZ/m\IZ)$ is exactly
\[
\min(6,2\nu_2(m))\cdot 2^{|\{p>2\text{ prime}: p\mid m\}|}.
\]
\end{lemmma}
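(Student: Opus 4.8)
The plan is to treat statements (1)--(4) as essentially already proved by the case analysis carried out in the paragraphs preceding the lemma, and to spend the remaining effort on the closing ``in particular'' clause. To make (1)--(4) self-contained I would first recall the skeleton of that analysis. Since $\lambda(a,b)^2=\lambda(a^2,b(1+a))$, the affine permutation $\lambda(a,b)$ is an involution of $\Hol(\IZ/p^k\IZ)$ exactly when $a$ is an involution of the unit group $(\IZ/p^k\IZ)^{\ast}$ and $b(1+a)=0$ in $\IZ/p^k\IZ$. For $p>2$ the unit group is cyclic of even order, hence has precisely the two involutions $1$ and $-1$; for $p=2$ and $k\geq 3$ it is isomorphic to $\IZ/2\IZ\times\IZ/2^{k-2}\IZ$ and has the four involutions $1$, $-1$, $2^{k-1}-1$, $2^{k-1}+1$ (with $k=1,2$ dealt with by direct inspection). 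For each admissible $a$ one reads the admissible values of $b$ off the equation $b(1+a)=0$, using Lemma~\ref{primaryCyclicLem} to compute $\nu_p(1+a)$ when $p=2$; then, for fixed $a$, the involutions $\lambda(a,b)$ split into conjugacy classes according to the rule from Subsection~\ref{subsec4P2}, namely $\lambda(a,b)\sim\lambda(a,b')$ iff $b'\in\{(1-a)z+cb : z\in\IZ/p^k\IZ,\ c\in(\IZ/p^k\IZ)^{\ast}\}$. Carrying this out case by case collapses the admissible $b$ into the $2$, $2$, $4$, resp.\ $6$ classes listed, with the stated representatives.

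Granting (1)--(4), the composite case is a multiplicativity argument. Write $m=p_1^{k_1}\cdots p_r^{k_r}$ with distinct primes $p_t$. By Proposition~\ref{crtProp}, $\Hol(\IZ/m\IZ)\cong\prod_{t=1}^r\Hol(\IZ/p_t^{k_t}\IZ)$ as abstract groups. In any direct product $G=\prod_t G_t$ of finite groups, an element $(g_t)_t$ is an involution iff every $g_t$ is, and $(g_t)_t$ is conjugate to $(h_t)_t$ iff $g_t$ is conjugate to $h_t$ in $G_t$ for all $t$; hence the conjugacy classes of involutions of $G$ are in bijection with the tuples of conjugacy classes of involutions of the factors, and their number is the product of the numbers for the $G_t$. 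Applying (1) to each odd prime divisor of $m$ gives a factor $2$ apiece, i.e.\ an overall contribution of $2^{|\{p>2\text{ prime}: p\mid m\}|}$, and applying (2)--(4) to the prime $2$ (when $2\mid m$) gives the factor $\min(6,2\nu_2(m))$, which equals $2$, $4$, or $6$ according as $\nu_2(m)$ is $1$, $2$, or at least $3$. Multiplying these factors gives exactly the displayed count (with the understanding that when $2\nmid m$ the absent prime-$2$ factor is $1$ rather than $\min(6,0)=0$).

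The only genuinely labour-intensive part is the case-by-case bookkeeping behind (1)--(4), and specifically the $p=2$, $k\geq 3$ case: there one must, for each of the four unit-group involutions $a$, use the valuation formula of Lemma~\ref{primaryCyclicLem}(2) to determine both the admissible $b$ (from $b(1+a)=0$) and the subgroup or coset of admissible $b$ cut out by the conjugacy rule, and then check that this yields exactly the six listed representatives. Since this is precisely what the discussion preceding the lemma accomplishes, the proof of the lemma itself needs only to cite that discussion for (1)--(4) and to add the short direct-product reasoning above for the final count.
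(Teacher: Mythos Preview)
Your proposal is correct and follows essentially the same approach as the paper: the paper presents the case analysis for (1)--(4) in the paragraphs immediately preceding the lemma (so the lemma is a summary), and derives the ``in particular'' from Proposition~\ref{crtProp} together with the fact that conjugacy classes in a direct product are Cartesian products of conjugacy classes in the factors. Your parenthetical remark that the displayed formula should be read with the prime-$2$ factor equal to $1$ when $2\nmid m$ is a valid caveat---as literally written, $\min(6,2\nu_2(m))=0$ for odd $m$, which does not match the actual count $2^{|\{p>2:\,p\mid m\}|}$.
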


The \enquote{In particular} in Lemma \ref{involConjLem} holds because any isomorphism
\[
\IZ/m\IZ\rightarrow\prod_{p\in\IP}{\IZ/p^{\nu_p(m)}\IZ}
\]
induces an isomorphism of (permutation) groups
\[
\Hol(\IZ/m\IZ)\rightarrow\prod_{p\in\IP}{\Hol(\IZ/p^{\nu_p(m)}\IZ)},
\]
and conjugacy classes in a direct product of groups are Cartesian products of conjugacy classes in the factor groups. In particular, an involution conjugacy class in $\Hol(\IZ/m\IZ)$ is essentially a tuple of involution conjugacy classes in the groups $\Hol(\IZ/p^{\nu_p(m)}\IZ)$ where $p$ ranges over the set of primes $\IP$, and thus the number of involution conjugacy classes in $\Hol(\IZ/m\IZ)$ is the product of the numbers of involution conjugacy classes in the groups $\Hol(\IZ/p^{\nu_p(m)}\IZ)$.

In the same vein, if one wants to obtain an explicit list of representatives for the conjugacy classes of involutions in $\Hol(\IZ/m\IZ)$, one can do so by combining Lemma \ref{involConjLem} with the effective Chinese Remainder Theorem. Recall that if $m=p_1^{k_1}\cdots p_r^{k_r}$ is the prime power factorization of $m$, then the following function $\iota:\prod_{i=1}^r{\IZ/p_i^{k_i}\IZ}\rightarrow\IZ/m\IZ$ is a group isomorphism:
\[
\iota((a_i+p_i^{k_i}\IZ)_{i=1,\ldots,r})=\sum_{i=1}^r{a_iM_i}
\]
with
\[
M_i:=\prod_{j\not=i}{p_j^{k_j}}\cdot(\prod_{j\not=i}{p_j^{k_j}})^{-1},
\]
the inverse being taken modulo $p_i^{k_i}$. Now, conjugacy classes of involutions in the direct product $\prod_{i=1}^r{\Hol(\IZ/p_i^{k_i}\IZ)}$ correspond to tuples of involution conjugacy classes in the factor groups, and by mapping (representatives of) those class tuples under the group isomorphism
\begin{equation}\label{holIotaEq}
\prod_{i=1}^r{\Hol(\IZ/p_i^{k_i}\IZ)}\rightarrow\Hol(\IZ/m\IZ),(\lambda(a_i,b_i))_{i=1,\ldots,r}\mapsto\lambda(\iota(a_1,\ldots,a_r),\iota(b_1,\ldots,b_r)),
\end{equation}
one obtains a complete list of (representatives of) the conjugacy classes of involutions in $\Hol(\IZ/m\IZ)$.

\begin{exxample}\label{involConjEx1}
We determine a complete set of representatives for the conjugacy classes of involutions in $\Hol(\IZ/12\IZ)$. Set $p_1:=2$ and $p_2:=3$. Then $12=p_1^2\cdot p_2^1$, and with the above notation, we have $M_1=3\cdot 3^{-1}=3\cdot 3=9$ and $M_2=4\cdot 4^{-1}=4\cdot 1^{-1}=4\cdot 1=4$. We thus have the group isomorphism
\[
\iota:\IZ/4\IZ\times\IZ/3\IZ\rightarrow\IZ/12\IZ,(a_1+4\IZ,a_2+3\IZ)\mapsto(9a_1+4a_2+12\IZ).
\]
Note that by Lemma \ref{involConjLem}, $\{\lambda(1,0),\lambda(1,2),\lambda(-1,0),\lambda(-1,1)\}$ resp.~$\{\lambda(1,0),\lambda(-1,0)\}$ is a complete set of involution conjugacy class representatives in $\Hol(\IZ/4\IZ)$ respectively $\Hol(\IZ/3\IZ)$. We now form all $4\cdot2=8$ possible pairs of these representatives and apply the isomorphism from Formula (\ref{holIotaEq}) to get the corresponding conjugacy class representatives in $\Hol(\IZ/12\IZ)$. For example, for $(\lambda(1,2),\lambda(-1,0))$, the corresponding representative is $\lambda(9\cdot1+4\cdot(-1),9\cdot2+4\cdot0)=\lambda(5,18)=\lambda(5,6)$. The results of this are summarized in the following table:

\begin{table}[h]\centering
\begin{tabular}{|c|c|c|}\hline
\diagbox{$\text{ mod }4$}{$\text{mod }3$} & $\lambda(1,0)$ & $\lambda(-1,0)$ \\ \hline
$\lambda(1,0)$ & $\lambda(1,0)$ & $\lambda(5,0)$ \\ \hline
$\lambda(1,2)$ & $\lambda(1,6)$ & $\lambda(5,6)$ \\ \hline
$\lambda(-1,0)$ & $\lambda(7,0)$ & $\lambda(-1,0)$ \\ \hline
$\lambda(-1,1)$ & $\lambda(7,9)$ & $\lambda(-1,9)$ \\ \hline
\end{tabular}
\end{table}
\end{exxample}

\section{Some applications of Theorems \ref{wreathIsoTheo} and \ref{cycloPolyTheo}}\label{sec5}

\subsection{Cycle indices of (generalized) cyclotomic permutations}\label{subsec5P1}

In this subsection, we discuss how to compute, for a given positive integer $d$ and prime power $q$ with $d\mid q-1$, the cycle indices of the permutation groups $\GCP(d,q)$, $\FOCP(d,q)$ and $\CP(d,q)$ introduced in Subsection \ref{subsec1P1}. For the two classes of groups $\GCP(d,q)$ and $\FOCP(d,q)$, this is a simple application of P{\'o}lya's theorem, Theorem \ref{polyaTheo}, due to their wreath product structure. However, for $\CP(d,q)$, we will need to put in more work of our own. We will actually deal with a slightly more general problem that we will describe in what follows.

First, consider the following group-theoretic concept: For every group $G$, the (injective) group homomorphism $G\rightarrow\Sym(G),g\mapsto(x\mapsto xg)$, is called the \emph{(right-)regular representation of $G$}, and its image is denoted by $G_{\reg}$ (cf.~the notation $C_{\reg}$ from Theorem \ref{wreathIsoTheo}).

Now, equipped with this concept, consider the following permutation groups:

\begin{nottation}\label{wGroupNot}
Let $d$ and $m$ be positive integers.
\begin{enumerate}
\item We define $W(d,m)$ as the permutation group
\[
\Hol(\IZ/m\IZ)\wr_{\imp}\Sym(d)\leq\Sym(\IZ/m\IZ\times\{0,1,\ldots,d-1\}).
\]
\item We define $W_=(d,m)$ as the subgroup of $W(d,m)$ that consists of the elements of the form
\[
(\psi,(\lambda(a,b_i))_{i=0,1,\ldots,d-1})
\]
where $\psi\in\Sym(d)$, $a\in(\IZ/m\IZ)^{\ast}$, and $b_0,\ldots,b_{d-1}\in\IZ/m\IZ$.
\item We define $W_1(d,m)$ as the permutation group
\[
(\IZ/m\IZ)_{\reg}\wr_{\imp}\Sym(d)\leq\Sym(\IZ/m\IZ\times\{0,1,\ldots,d-1\}).
\]
\end{enumerate}
\end{nottation}

Observe that $W_1(d,m)\leq W_=(d,m)\leq W(d,m)$ and that by Theorem \ref{wreathIsoTheo}, for $m=\frac{q-1}{d}$, these permutation groups are isomorphic to $\FOCP(d,q)$, $\CP(d,q)$ and $\GCP(d,q)$ respectively. Our goal is to determine the cycle indices of the groups from Notation \ref{wGroupNot}.

We start with the two easy cases, $W(d,m)$ and $W_1(d,m)$. The general approach for computing their cycle indices is discussed in Propositions \ref{GCPProp} and \ref{FOCPProp} below, and each is followed up by an example.

\begin{propposition}\label{GCPProp}
Let $d$ and $m$ be positive integers. Using the notation $\CI^{(m)}$ from Theorem \ref{polyaTheo}, we have
\[
\CI(W(d,m))=\CI(\Sym(d))(\CI^{(1)}(\Hol(\IZ/m\IZ)),\ldots,\CI^{(d)}(\Hol(\IZ/m\IZ))).
\]
In particular, if $q$ is a prime power with $d\mid q-1$, then
\[
\CI(\GCP(d,q))=\CI(\Sym(d))(\CI^{(1)}(\Hol(\IZ/\frac{q-1}{d}\IZ)),\ldots,\CI^{(d)}(\Hol(\IZ/\frac{q-1}{d}\IZ)))
\]
\end{propposition}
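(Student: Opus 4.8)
The plan is to derive both formulas directly from P\'olya's Theorem (Theorem \ref{polyaTheo}) together with Theorem \ref{wreathIsoTheo}, using only the elementary fact that the cycle index of a permutation group is an invariant of its isomorphism type as a permutation group.

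First I would recall that, by Notation \ref{wGroupNot}(1), $W(d,m)=\Hol(\IZ/m\IZ)\wr_{\imp}\Sym(d)$, where $\Hol(\IZ/m\IZ)$ acts on the $m$-element set $\IZ/m\IZ$. Applying Theorem \ref{polyaTheo} with $G=\Hol(\IZ/m\IZ)$ (so $|\Omega|=m$) and $P=\Sym(d)$ immediately gives
\[
\CI(W(d,m))=\CI(\Sym(d))(\CI^{(1)}(\Hol(\IZ/m\IZ)),\ldots,\CI^{(d)}(\Hol(\IZ/m\IZ))),
\]
which is the first assertion. For the ``In particular'' part I would set $m:=\frac{q-1}{d}$ and reduce to showing $\CI(\GCP(d,q))=\CI(W(d,m))$. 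By Theorem \ref{wreathIsoTheo}, $\GCP(d,q)$ is isomorphic, as a permutation group, to $\Hol(C)\wr_{\imp}\Sym(d)$, where $C$ is the index $d$ subgroup of $\IF_q^{\ast}$; since $C$ is cyclic of order $m$, any abstract group isomorphism $C\rightarrow\IZ/m\IZ$ induces an isomorphism of permutation groups $\Hol(C)\rightarrow\Hol(\IZ/m\IZ)$ (as noted at the start of Section \ref{sec3}), and hence an isomorphism of permutation groups $\Hol(C)\wr_{\imp}\Sym(d)\rightarrow W(d,m)$. I would then invoke the observation that whenever $(\beta,\iota)\colon G\rightarrow H$ is an isomorphism of permutation groups in the sense of Definition \ref{permGroupIsoDef}, the bijection $\beta$ conjugates the permutation induced by $g$ to the permutation induced by $\iota(g)$, so $\CT(g)=\CT(\iota(g))$ for every $g\in G$; averaging over $G$ and using that $\iota$ is a bijection yields $\CI(G)=\CI(H)$. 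Chaining the two permutation-group isomorphisms gives $\CI(\GCP(d,q))=\CI(W(d,m))$, and substituting into the first formula completes the proof.

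There is essentially no real obstacle here. The only point that needs a sentence of justification is the invariance of the cycle index under isomorphism of permutation groups, which is routine from Definition \ref{permGroupIsoDef}, together with the (already-noted) fact that an abstract isomorphism $C\rightarrow\IZ/m\IZ$ lifts to a permutation-group isomorphism of the corresponding holomorphs. Everything else is a direct citation of Theorems \ref{polyaTheo} and \ref{wreathIsoTheo}.
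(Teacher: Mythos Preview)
Your proposal is correct and follows essentially the same approach as the paper: the paper's proof is a two-line argument invoking Theorem~\ref{polyaTheo} for the main statement and Theorem~\ref{wreathIsoTheo} together with invariance of the cycle index under permutation-group isomorphism for the ``In particular'' part. Your write-up simply spells out the latter invariance (and the passage from $\Hol(C)$ to $\Hol(\IZ/m\IZ)$) in a bit more detail, which is fine.
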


\begin{proof}
The main statement is a direct application of Theorem \ref{polyaTheo}, and the \enquote{In particular} holds by Theorem \ref{wreathIsoTheo} (and the fact that isomorphic permutation groups have the same cycle index).
\end{proof}

In Section \ref{sec3}, we discussed how to compute the cycle index of $\Hol(\IZ/m\IZ)$, which involves computing the prime factorization of $m$. See Definition \ref{refProdDef}, Proposition \ref{crtProp}, Theorem \ref{primaryCyclicTheo}, and Example \ref{hol12Ex}, where we computed $\CI(\Hol(\IZ/12\IZ))$, for more details. In the following example, we build on Example \ref{hol12Ex} and compute the cycle index $\CI(\GCP(2,25))$:

\begin{exxample}\label{GCPEx}
We compute
\[
\CI(\GCP(2,25))=\CI(W(2,12)).
\]
By Proposition \ref{ciSymProp},
\[
\CI(\Sym(2))=\frac{1}{2}x_1^2+\frac{1}{2}x_2,
\]
and by Example \ref{hol12Ex},
\begin{align*}
\CI(\Hol(\IZ/12\IZ))=&\frac{1}{48}x_1^{12}+\frac{1}{24}x_1^6x_2^3+\frac{1}{16}x_1^4x_2^4+\frac{1}{8}x_1^2x_2^5+\frac{1}{4}x_2^6+\frac{1}{24}x_3^4+\frac{1}{12}x_3^2x_6+ \\
&\frac{1}{6}x_4^3+\frac{1}{8}x_6^2+\frac{1}{12}x_{12}.
\end{align*}
Set
\[
P_1:=\CI(\Hol(\IZ/12\IZ))=\CI^{(1)}(\Hol(\IZ/12\IZ)),
\]
view $P_1$ as a polynomial in the variables $x_1,\ldots,x_{12}$, and set
\[
P_2:=\CI^{(2)}(\Hol(\IZ/12\IZ))=P_1(x_2,x_4,\ldots,x_{24})=\frac{1}{48}x_2^{12}+\frac{1}{24}x_2^6x_4^3+\cdots+\frac{1}{12}x_{24}.
\]
By Proposition \ref{GCPProp},
\[
\CI(\GCP(2,25))=\CI(\Sym(2))(P_1,P_2)=\frac{1}{2}P_1^2+\frac{1}{2}P_2.
\]
Using GAP \cite{GAP4}, the authors evaluated the expression $\frac{1}{2}P_1^2+\frac{1}{2}P_2$. The result is a homogeneous polynomial $Q_1$ of degree $24$ with $54$ distinct terms (in other words, there are $54$ distinct cycle types of generalized cyclotomic permutations of degree $2$ of $\IF_{25}$), which we will not display here. Recall that $\GCP(2,25)$ is the permutation group on $\IF_{25}^{\ast}$ of all restrictions to $\IF_{25}^{\ast}$ of index $2$ generalized cyclotomic permutations of $\IF_{25}$. If one wants the cycle index of the permutation group on $\IF_{25}$ of actual index $2$ generalized cyclotomic permutations of $\IF_{25}$, one needs to multiply $Q_1$ by $x_1$, to account for the additional fixed point $0$.
\end{exxample}

For first-order cyclotomic permutations, we have the following analogous result:

\begin{propposition}\label{FOCPProp}
Let $d$ and $m$ be positive integers. Using the notation $\CI^{(m)}$ from Theorem \ref{polyaTheo}, we have
\[
\CI(W_1(d,m))=\CI(\Sym(d))(\CI^{(1)}((\IZ/m\IZ)_{\reg}),\ldots,\CI^{(d)}((\IZ/m\IZ)_{\reg})).
\]
In particular, if $q$ is a prime power with $d\mid q-1$, then
\[
\CI(\FOCP(d,q))=\CI(\Sym(d))(\CI^{(1)}((\IZ/\frac{q-1}{d}\IZ)_{\reg}),\ldots,\CI^{(d)}((\IZ/\frac{q-1}{d}\IZ)_{\reg})).
\]
\end{propposition}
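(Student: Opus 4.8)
The plan is to mirror the proof of Proposition \ref{GCPProp} almost verbatim, since the only change is that the copy of the base group is $(\IZ/m\IZ)_{\reg}$ rather than $\Hol(\IZ/m\IZ)$. First I would recall that, by Notation \ref{wGroupNot}(3), $W_1(d,m)$ is by definition the imprimitive permutational wreath product $(\IZ/m\IZ)_{\reg}\wr_{\imp}\Sym(d)$. Applying P{\'o}lya's Theorem \ref{polyaTheo} with $G=(\IZ/m\IZ)_{\reg}\leq\Sym(\IZ/m\IZ)$ and $P=\Sym(d)$ then immediately yields
\[
\CI(W_1(d,m))=\CI(\Sym(d))(\CI^{(1)}((\IZ/m\IZ)_{\reg}),\ldots,\CI^{(d)}((\IZ/m\IZ)_{\reg})),
\]
which is the main assertion.

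For the \enquote{In particular}, I would invoke Theorem \ref{wreathIsoTheo}, specifically part (2): taking $C$ to be the index $d$ subgroup of $\IF_q^{\ast}$ (so $|C|=\frac{q-1}{d}$), the pre-image of $\FOCP(d,q)$ under the permutation-group isomorphism $\iota_{\omega}$ is $C_{\reg}\wr_{\imp}\Sym(d)$, where $C_{\reg}$ is the image of the regular representation $C\to\Hol(C)$. In particular $\FOCP(d,q)$ and $C_{\reg}\wr_{\imp}\Sym(d)$ are isomorphic as permutation groups. Since $C$ is cyclic of order $\frac{q-1}{d}$, any group isomorphism $C\to\IZ/\frac{q-1}{d}\IZ$ carries the permutation group $C_{\reg}$ on $C$ to the permutation group $(\IZ/\frac{q-1}{d}\IZ)_{\reg}$ on $\IZ/\frac{q-1}{d}\IZ$ (this is the standard fact that the regular representation is natural in the group), and hence induces a permutation-group isomorphism $C_{\reg}\wr_{\imp}\Sym(d)\cong(\IZ/\frac{q-1}{d}\IZ)_{\reg}\wr_{\imp}\Sym(d)=W_1(d,\frac{q-1}{d})$. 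Composing, $\FOCP(d,q)$ is isomorphic as a permutation group to $W_1(d,\frac{q-1}{d})$, and since isomorphic permutation groups have identical cycle indices, the formula for $\CI(\FOCP(d,q))$ follows from the one for $\CI(W_1(d,m))$ by setting $m=\frac{q-1}{d}$.

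There is essentially no obstacle here; the only point requiring a moment's care is the observation that an abstract group isomorphism between two cyclic groups automatically upgrades to an isomorphism of their regular representations as permutation groups (and then to an isomorphism of the corresponding wreath products), so that the cycle index is preserved. This is routine and could be stated in a single sentence, or even left implicit as was done in the proof of Proposition \ref{GCPProp}. Consequently I would keep the write-up to two or three sentences, exactly parallel to that earlier proof.
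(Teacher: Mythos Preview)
Your proposal is correct and follows exactly the same approach as the paper's own proof, which simply states that the argument is analogous to that of Proposition \ref{GCPProp}, invoking Theorem \ref{wreathIsoTheo}(2) for the required permutation-group isomorphism. The extra care you take in spelling out why $C_{\reg}\wr_{\imp}\Sym(d)\cong W_1(d,\frac{q-1}{d})$ as permutation groups is a harmless (and accurate) elaboration of a step the paper leaves implicit.
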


\begin{proof}
This is analogous to the proof of Proposition \ref{GCPProp}, using Theorem \ref{wreathIsoTheo}(2) for the required permutation group isomorphism.
\end{proof}

Concerning the cycle index of $(\IZ/m\IZ)_{\reg}$, we note the following simple and well-known result:

\begin{lemmma}\label{regularRepLem}
Let $G$ be a finite group. Then
\[
\CI(G_{\reg})=\frac{1}{|G|}\sum_{o\mid|G|}{|\{g\in G:\ord(g)=o\}|x_o^{|G|/o}}.
\]
In particular, if $G$ is cyclic, then
\[
\CI(G_{\reg})=\frac{1}{|G|}\sum_{o\mid|G|}{\phi(o)x_o^{|G|/o}}.
\]
\end{lemmma}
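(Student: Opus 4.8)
\emph{Proof proposal.} The plan is to compute the cycle type of a single generator $x\mapsto xg$ of $G_{\reg}$ directly, then average over $g\in G$ and collect terms according to $\ord(g)$. First I would fix $g\in G$ and analyze the permutation $\lambda_g:x\mapsto xg$ of $G$. Its cyclic group $\langle\lambda_g\rangle$ acts on $G$, and the orbit of a point $x\in G$ is $\{x,xg,xg^2,\ldots\}=\{xg^j:j\in\IZ\}$, which has cardinality $\ord(g)$ since $xg^j=x$ if and only if $g^j=1_G$. Hence every cycle of $\lambda_g$ has length exactly $\ord(g)$, and as the $|G|$ points of $G$ are partitioned into such cycles, there are precisely $|G|/\ord(g)$ of them. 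In terms of cycle-type monomials this says
\[
\CT(\lambda_g)=x_{\ord(g)}^{|G|/\ord(g)}.
\]

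Next I would substitute this into the definition of the cycle index. Since $g\mapsto\lambda_g$ is a bijection onto $G_{\reg}$, we get
\[
\CI(G_{\reg})=\frac{1}{|G|}\sum_{g\in G}\CT(\lambda_g)=\frac{1}{|G|}\sum_{g\in G}x_{\ord(g)}^{|G|/\ord(g)}.
\]
Sorting the sum over $g$ by the common value $o=\ord(g)$, which by Lagrange's theorem runs over the divisors of $|G|$, yields exactly
\[
\CI(G_{\reg})=\frac{1}{|G|}\sum_{o\mid|G|}|\{g\in G:\ord(g)=o\}|\,x_o^{|G|/o},
\]
which is the first claimed formula. For the \enquote{In particular}, if $G$ is cyclic of order $n=|G|$, then for each divisor $o$ of $n$ there is a unique subgroup of order $o$, which is itself cyclic and therefore has exactly $\phi(o)$ generators; these generators are precisely the elements of $G$ of order $o$. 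Substituting $|\{g\in G:\ord(g)=o\}|=\phi(o)$ into the general formula gives the second displayed identity.

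I do not anticipate any real obstacle here: the only point requiring a moment's care is the standard observation that right-translation by $g$ decomposes $G$ into $|G|/\ord(g)$ cycles of equal length $\ord(g)$, and the counting of elements of each order in a cyclic group, both of which are elementary. No use of the wreath-product machinery or of P\'olya's theorem is needed for this lemma.
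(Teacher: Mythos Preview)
Your proof is correct. The paper does not actually supply a proof of this lemma---it introduces the statement as ``simple and well-known'' and leaves it at that---so there is nothing to compare against; your argument is precisely the standard one that the authors had in mind.
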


\begin{exxample}\label{FOCPEx}
We compute
\[
\CI(\FOCP(2,25))=\CI(W_1(2,12)).
\]
By Proposition \ref{ciSymProp},
\[
\CI(\Sym(2))=\frac{1}{2}x_1^2+\frac{1}{2}x_2,
\]
and by Lemma \ref{regularRepLem},
\[
\CI((\IZ/12\IZ)_{\reg})=\frac{1}{12}x_1^{12}+\frac{1}{12}x_2^6+\frac{1}{6}x_3^4+\frac{1}{6}x_4^3+\frac{1}{6}x_6^2+\frac{1}{3}x_{12}.
\]
Set
\[
P_1:=\CI((\IZ/12\IZ)_{\reg})=\CI^{(1)}((\IZ/12\IZ)_{\reg}),
\]
view $P_1$ as a polynomial in the variables $x_1,\ldots,x_{12}$, and set
\[
P_2:=\CI^{(2)}((\IZ/12\IZ)_{\reg})=P_1(x_2,x_4,\ldots,x_{24})=\frac{1}{12}x_2^{12}+\frac{1}{12}x_4^6+\cdots+\frac{1}{3}x_{24}.
\]
By Proposition \ref{FOCPProp},
\[
\CI(\FOCP(2,25))=\CI(\Sym(2))(P_1,P_2)=\frac{1}{2}P_1^2+\frac{1}{2}P_2.
\]
As in Example \ref{GCPProp}, we used GAP \cite{GAP4} to evaluate this last expression. It is a homogeneous polynomial $Q_2$ of degree $24$ with $23$ distinct terms, which we will not display here. In order to get the cycle index of the permutation group of index $2$ first-order cyclotomic permutations on $\IF_{25}$ (as opposed to their restrictions to $\IF_{25}^{\ast}$), multiply $Q_2$ by $x_1$.
\end{exxample}

There is also a systematic way to compute $\CI(W_=(d,m))$, but it is more complicated to describe. We need some more preparations first.

\begin{nottation}\label{omicronNot}
Let $m$ be a positive integer, and let $a\in\IZ$ with $\gcd(m,a)=1$.
\begin{enumerate}
\item Assume that $m=p^k$ is a power of a prime $p$ (possibly with $k=0$). We define $\omicron_m(a)=\omicron_{p^k}(a)$ as follows:
\begin{enumerate}
\item If $p>2$, then $\omicron_{p^k}(a):=\ord_{p^k}(a)$, the multiplicative order of $a$ modulo $p^k$.
\item If $p=2$, then
\begin{enumerate}
\item if $k\leq1$, then $\omicron_{p^k}(a):=(0,1)$.
\item if $k>1$, then $a$, viewed as an element of the ring $\IZ/2^k\IZ$, has a unique representation as $(-1)^{\epsilon}a'$ where $\epsilon\in\{0,1\}$ and $a'$ is a power of $5$. We set $\omicron_{2^k}(a):=(\epsilon,\ord_{2^k}(a'))$.
\end{enumerate}
\end{enumerate}
\item Assume that $m=p^k$ is a power of a prime $p$. We set
\[
\Omega(m)=\Omega(p^k):=\{\omicron_{p^k}(a)\mid a\in\IZ,\gcd(m,a)=1\}.
\]
\item We set $\vec{\omicron}_m(a):=(\omicron_{p^{\nu_p(m)}}(a))_{p\in\IP}$, where $\IP$ denotes the set of primes.
\end{enumerate}
\end{nottation}

Note that
\[
\Omega(p^k)=\begin{cases}\{d\in\IN^+:d\mid\phi(p^k)\}, & \text{if }p>2, \\ \{(0,1)\}, & \text{if }p=2\text{ and }k\leq1, \\ \{0,1\}\times\{d\in\IN^+:d\mid 2^{k-2}\}, & \text{if }p=2\text{ and }k\geq2.\end{cases}
\]

\begin{exxample}\label{omicronEx}
Let $m=12$ and $a=-1$. Then
\[
\vec{\omicron}_m(a)=\vec{\omicron}_{12}(-1)=(\omicron_{2^2}(-1),\omicron_{3^1}(-1),\omicron_{5^0}(-1),\omicron_{7^0}(-1),\ldots)=((1,1),2,1,1,\ldots).
\]
\end{exxample}

In order to explain why Notation \ref{omicronNot} is of interest to us, we introduce the following concept:

\begin{deffinition}\label{cycleCounterDef}
Let $X\subseteq\Sym(\Omega)$ be a set of permutations on the finite set $\Omega$. We call the polynomial
\[
\CC(X):=\sum_{g\in X}{\CT(g)}\in\IQ[x_1,x_2,\ldots,x_{|\Omega|}]
\]
the \emph{cycle counter of $X$}.
\end{deffinition}

Note that as long as $X$ is nonempty, we have $\CC(X)=|X|\cdot\CI(X)$, where the \emph{cycle index} $\CI(X)$ is as defined at the beginning of the proof of Theorem \ref{polyaTheo}.

Our motivation behind Notation \ref{omicronNot} is this: By Tables \ref{table1} and \ref{table2} in Section \ref{sec3}, for a fixed integer $a$ with $\gcd(a,m)=1$, the cycle counter of the set of all affine permutations of $\IZ/m\IZ$ of the form $x\mapsto ax+b$ for some $b\in\IZ/m\IZ$ only depends on $\vec{\omicron}_m(a)$. More precisely, set
\[
\Hol_a(\IZ/m\IZ):=\{(x\mapsto ax+b)\in\Hol(\IZ/m\IZ): b\in \IZ/m\IZ\}.
\]
We will give a formula for $\CC(\Hol_a(\IZ/m\IZ))$ in terms of $\vec{\omicron}_m(a)$, see Proposition \ref{holaProp} below. This proposition is easier to formulate if we first introduce some notation for the case that $m$ is a prime power:

\begin{nottation}\label{gammaNot}
Let $p$ be a prime, let $k$ be a non-negative integer, and let $o\in\Omega(p^k)$. We introduce the notation $\Gamma_{p^k}(o)$, denoting a polynomial in $\IQ[x_1,\ldots,x_{p^k}]$, in the following case distinction:
\begin{enumerate}
\item If $p>2$, in which case $o$ is a divisor of $\phi(p^k)$,
\[
\Gamma_{p^k}(o):=\begin{cases}p^kx_1x_{o_{p'}}^{\frac{p^{k-\nu_p(o)}-1}{o_{p'}}}\prod_{u=1}^{\nu_p(o)}{x_{p^uo_{p'}}^{\frac{p^{k-1-\nu_p(o)}(p-1)}{o_{p'}}}}, & \text{if }o\nmid\phi(p^k)_p, \\ ox_1^{\frac{p^k}{o}}\prod_{u=1}^{\nu_p(o)}{x_{p^u}^{\frac{p^{k-1}}{o}(p-1)}}+\sum_{v=0}^{k-\nu_p(o)-1}{\phi(p^{k-v})x_{p^{k-v}}^{p^v}}, & \text{if }o\mid\phi(p^k)_p.\end{cases}
\]
\item If $p=2$ and $k\leq1$, then $o=(0,1)$, and we set
\[
\Gamma_{p^k}(o):=\begin{cases}x_1, & \text{if }k=0, \\ x_1^2+x_2, & \text{if }k=1.\end{cases}
\]
\item If $p=2$ and $k\geq2$, then $o$ is a pair $(\epsilon,o')$ where $\epsilon\in\{0,1\}$ and $o'\mid 2^{k-2}$. We define $\Gamma_{p^k}(o)$ as follows:
\begin{enumerate}
\item If $k=2$:
\[
\Gamma_4((\epsilon,1)):=\begin{cases}x_1^4+x_2^2+2x_4, & \text{if }\epsilon=0, \\ 2x_1^2x_2+2x_2^2, & \text{if }\epsilon=1.\end{cases}
\]
\item If $k\geq3$, set $\nu:=k-2-\nu_2(o')$ and $\nu':=\min(k-3,\nu)$. Then
\[
\Gamma_{2^k}(\epsilon,o'):=\begin{cases}2^{k-1}x_{2^{k-1-\nu}}^{2^{1+\nu}}+2^{k-1}x_1^2x_2^{2^{2+\nu'}-1}\prod_{u=2}^{k-2-\nu'}{x_{2^u}^{2^{1+\nu'}}}, & \text{if }\epsilon=1, \\ 2^{k-2-\nu}x_1^{2^{2+\nu}}\prod_{u=1}^{k-2-\nu}{x_{2^u}^{2^{1+\nu}}}+\sum_{v=0}^{1+\nu}{\phi(2^{k-v})x_{2^{k-v}}^{2^v}}, & \text{if }\epsilon=0.\end{cases}
\]
\end{enumerate}
\end{enumerate}
\end{nottation}

\begin{propposition}\label{gammaProp}
Let $p$ be a prime, let $k$ be a non-negative integer, and let $o\in\Omega(p^k)$. Then $\Gamma_{p^k}(o)$ is the cycle counter of the set $\Hol_a(\IZ/p^k\IZ)$ that consists of all affine permutations $x\mapsto ax+b$ of $\IZ/m\IZ$ where $a\in(\IZ/m\IZ)^{\ast}$ is fixed and satisfies $\omicron_{p^k}(a)=o$ and $b\in\IZ/m\IZ$.
\end{propposition}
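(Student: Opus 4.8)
The plan is to compute the cycle counter $\CC(\Hol_a(\IZ/p^k\IZ))=\sum_{b\in\IZ/p^k\IZ}\CT(x\mapsto ax+b)$ directly, reading off each summand from Proposition \ref{primaryCyclicProp} (i.e.\ from Tables \ref{table1} and \ref{table2}). The point is that, for fixed $a$, these tables partition the set of $b$ into a handful of classes on each of which $\CT(x\mapsto ax+b)$ is constant and depends only on $o=\omicron_{p^k}(a)$; the cycle counter is therefore a sum of monomials whose coefficients are the cardinalities of these classes, and those cardinalities can be computed with Lemma \ref{simpleLem}. In effect this is the same bookkeeping as in the proof of Theorem \ref{primaryCyclicTheo}, except that $a$ is now held fixed (up to the value of $o$) and the outer summation over the admissible $a$'s is dropped.

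First I would handle the case $p>2$, where $o=\ord(a)$. If $o\nmid\phi(p^k)_p$, then $o_{p'}>1$, i.e.\ $a\not\equiv1\Mod p$, and by the first row of Table \ref{table1} the cycle type of $x\mapsto ax+b$ does not depend on $b$ at all; since there are $p^k$ admissible $b$, the cycle counter is $p^k$ times that cycle type, which is the first branch of Notation \ref{gammaNot}(1). If $o\mid\phi(p^k)_p$, then $a\equiv1\Mod p$ and, writing $a=(1+p)^e$, Lemma \ref{primaryCyclicLem}(1) gives $\nu_p^{(k)}(a-1)=k-\nu_p(o)$. I would then split the $b$ into those with $\nu_p^{(k)}(b)\ge\nu_p^{(k)}(a-1)$ --- there are $p^{\nu_p(o)}=o$ of them by Lemma \ref{simpleLem}(2), all giving the cycle type of the second row of Table \ref{table1} --- and those with $\nu_p^{(k)}(b)=v<\nu_p^{(k)}(a-1)$, for $v=0,1,\ldots,k-\nu_p(o)-1$, of which there are $\phi(p^{k-v})$ by Lemma \ref{simpleLem}(1), each giving the cycle type $x_{p^{k-v}}^{p^v}$ of the third row. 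Assembling these monomials reproduces the second branch of Notation \ref{gammaNot}(1).

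The case $p=2$ is handled the same way using Table \ref{table2}: $k\le1$ is immediate, $k=2$ is a direct check of the rows with $k=2$, and for $k\ge3$ one splits on $\epsilon$ and then, when $\epsilon=1$, on the parity of $b$ (with $2^{k-1}$ admissible $b$ in each class) and, when $\epsilon=0$, on whether $\nu_2^{(k)}(b)<\nu_2^{(k)}(a-1)$ or not (using $\nu_2^{(k)}(a-1)=2+\nu_2^{(k-2)}(e)$ from Lemma \ref{primaryCyclicLem}(2)), counting the relevant $b$ with Lemma \ref{simpleLem} just as before. I expect the only genuine obstacle to be notational bookkeeping in the $p=2$, $k\ge3$ subcases: one must correctly extract $\epsilon$ and the valuation of $e$ from $o=\omicron_{2^k}(a)=(\epsilon,o')$, and keep straight the clipped valuations $\nu_2^{(k-2)}$, $\nu_2^{(k-3)}$ and the truncation $\nu'=\min(k-3,\nu)$ that appear in Notation \ref{gammaNot}(3) --- precisely the delicate point in the proof of Theorem \ref{primaryCyclicTheo}(4) and in the discrepancy with \cite[Theorem 1.3]{WX93a}. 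Beyond that everything is a routine collection of terms, with no cancellation to fear since the monomials in Notation \ref{gammaNot} are pairwise distinct.
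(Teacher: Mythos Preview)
Your proposal is correct and follows essentially the same approach as the paper: both arguments read off the cycle types from Tables~\ref{table1} and~\ref{table2}, use Lemma~\ref{primaryCyclicLem} to translate between $o=\omicron_{p^k}(a)$ and the valuations $\nu_p^{(k)}(a-1)$, $\nu_2^{(k-2)}(e)$, etc., and then count the $b$ in each class via Lemma~\ref{simpleLem}. You have also correctly identified the only real subtlety, namely the bookkeeping with the clipped valuations $\nu$ and $\nu'$ in the $p=2$, $k\ge3$ subcase.
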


\begin{proof}
For statement (1): Noting that $o=\ord_{p^k}(a)$, the case \enquote{$o\nmid\phi(p^k)_p$}, where $o$ is not a power of $p$, corresponds to $a\not\equiv1\Mod{p}$. In that case, the cycle type does not depend on $b$, so we obtain $p^k$ permutations $x\mapsto ax+b$ that are all of the same cycle type, which can be read off from the first row in Table \ref{table1}. On the other hand, if $o\mid\phi(p^k)_p$, then $o$ is a power of $p$, say $o=p^s$ with $s\in\{0,1,\ldots,k-1\}$. Since $\ord(a)=o$, it follows that $a$ is an element of the Sylow $p$-subgroup of $(\IZ/p^k\IZ)^{\ast}$, whence $a=(1+p)^e$ for some $e\in\{0,1,\ldots,p^{k-1}-1\}$ by Lemma \ref{primaryCyclicLem}(1). Also by Lemma \ref{primaryCyclicLem}(1), we find that $\nu_p(o)=\nu_p(\ord(a))=k-1-\nu_p^{(k-1)}(e)$ and $\nu_p^{(k)}(a-1)=1+\nu_p^{(k-1)}(e)$, whence $\nu_p^{(k)}(a-1)=k-\nu_p(\ord(a))=k-\nu_p(o)$. We can substitute this into the cycle types specified in rows $2$ and $3$ of Table \ref{table1} to get formulas for those cycle types in terms of $o$. In order to conclude the proof of statement (1), we need to count the number of values of $b$ that correspond to each of the two cases ($\nu_p^{(k)}(b)\geq\nu_p^{(k)}(a-1)$ and its complementary case), but this is routine using Lemma \ref{simpleLem}.

For statement (2): This can be verified with some simple computations, which we omit.

For statement (3): The case \enquote{$k=2$} can be easily verified separately, and we omit the argument. For \enquote{$k\geq3$}, we carry the proof out. First, assume that $\epsilon=1$. Then there are only two distinct cycle types, and which of them occurs depends on whether or not $b$ is even. In both cases, $2^{k-1}$ values of $b$ \enquote{match}, which explains the coefficients. For the monomials, observe that by Lemma \ref{primaryCyclicLem}(2), we have $\nu_2(o')=\nu_2(5^e)=k-2-\nu_2^{(k-2)}(e)$, so that
\[
\nu_2^{(k-2)}(e)=k-2-\nu_2(o')=\nu
\]
and
\[
\nu_2^{(k-3)}(e)=\min\{k-3,\nu_2^{(k-2)}(e)\}=\min\{k-3,\nu\}=\nu'.
\]
Now assume that $\epsilon=0$. By Lemma \ref{primaryCyclicLem}(2), we have $\nu_2^{(k)}(a-1)=\nu_2^{(k)}(5^e-1)=2+\nu_2^{(k-2)}(e)=2+\nu$. This allows one to count the number of $b$ that lead to each possible cycle type using Lemma \ref{simpleLem}.
\end{proof}

\begin{propposition}\label{holaProp}
Let $m$ be a positive integer, and let
\[
\vec{\omicron}=(o_p)_{p\in\IP}\in\prod_{p\in\IP}{\Omega(p^{\nu_p(m)})}.
\]
Then for each $a\in\IZ$ with $\gcd(m,a)=1$ and $\vec{\omicron}_m(a)=\vec{\omicron}$, we have
\[
\CC(\Hol_a(\IZ/m\IZ))=\divideontimes_{p\in\IP}{\Gamma_{p^{\nu_p(m)}}(o_p)}.
\]
\end{propposition}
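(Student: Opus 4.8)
The plan is to reduce to the prime-power case via the Chinese Remainder Theorem and then assemble the answer from Proposition~\ref{gammaProp}, gluing the pieces together with Wei and Xu's multiplicativity of cycle types under direct products, Theorem~\ref{weiXuTheo}(1).

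First I would factor $m=p_1^{k_1}\cdots p_s^{k_s}$ into pairwise coprime prime powers, so that $\{p_1,\dots,p_s\}$ is the set of prime divisors of $m$ and $k_i=\nu_{p_i}(m)$. For a prime $p\nmid m$ one has $\nu_p(m)=0$, the set $\Omega(p^0)$ is a singleton, and $\Gamma_{p^0}(o_p)=x_1$ by Notation~\ref{gammaNot}. Since $x_1\divideontimes x_i^e=x_{\lcm(1,i)}^{1\cdot e\cdot\gcd(1,i)}=x_i^e$, the monomial $x_1$ is the identity element for the $\divideontimes$-product; hence all but finitely many factors of $\divideontimes_{p\in\IP}\Gamma_{p^{\nu_p(m)}}(o_p)$ equal $x_1$, the infinite product is well defined, and it collapses to the finite product $\divideontimes_{i=1}^s\Gamma_{p_i^{k_i}}(o_{p_i})$. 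So it suffices to prove $\CC(\Hol_a(\IZ/m\IZ))=\divideontimes_{i=1}^s\Gamma_{p_i^{k_i}}(o_{p_i})$.

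Next I would transport the set $\Hol_a(\IZ/m\IZ)$ through the permutation-group isomorphism $\Phi\colon\Hol(\IZ/m\IZ)\to\prod_{i=1}^s\Hol(\IZ/p_i^{k_i}\IZ)$ of Proposition~\ref{crtProp}, which sends $\lambda(a,b)$ to $(\lambda(a_i,b_i))_{i=1,\dots,s}$ with $a_i,b_i$ the residues of $a,b$ modulo $p_i^{k_i}$. For a fixed $a$ with $\gcd(a,m)=1$ and $\vec{\omicron}_m(a)=\vec{\omicron}$, the residues $a_i$ are fixed units with $\omicron_{p_i^{k_i}}(a_i)=o_{p_i}$, while by surjectivity of the CRT map in the $b$-coordinate the tuple $(b_1,\dots,b_s)$ runs through all of $\prod_i\IZ/p_i^{k_i}\IZ$ as $b$ runs through $\IZ/m\IZ$. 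Thus $\Phi$ restricts to a bijection $\Hol_a(\IZ/m\IZ)\to\prod_{i=1}^s\Hol_{a_i}(\IZ/p_i^{k_i}\IZ)$, the latter viewed as a subset of the direct-product permutation group $\prod_{i=1}^s\Hol(\IZ/p_i^{k_i}\IZ)$. Since isomorphisms of permutation groups preserve cycle types, $\CC(\Hol_a(\IZ/m\IZ))=\sum\CT((g_1,\dots,g_s))$, the sum ranging over this product set.

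Finally I would apply Theorem~\ref{weiXuTheo}(1) to rewrite each term as $\divideontimes_{i=1}^s\CT(g_i)$, and then use that the $\divideontimes$-product is bilinear (it is defined coefficient-wise in Definition~\ref{refProdDef}) as well as commutative and associative (\cite[Lemma~2.3]{WX93a}) to push the summation inside the product:
\[
\CC(\Hol_a(\IZ/m\IZ))=\divideontimes_{i=1}^s\Bigl(\sum_{g_i\in\Hol_{a_i}(\IZ/p_i^{k_i}\IZ)}\CT(g_i)\Bigr)=\divideontimes_{i=1}^s\CC\bigl(\Hol_{a_i}(\IZ/p_i^{k_i}\IZ)\bigr),
\]
after which Proposition~\ref{gammaProp} identifies the $i$-th factor with $\Gamma_{p_i^{k_i}}(\omicron_{p_i^{k_i}}(a_i))=\Gamma_{p_i^{k_i}}(o_{p_i})$. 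The only things requiring care are the two bookkeeping points flagged above — that $\Phi$ maps $\Hol_a(\IZ/m\IZ)$ onto the \emph{entire} product $\prod_i\Hol_{a_i}(\IZ/p_i^{k_i}\IZ)$ (surjectivity of CRT in the $b$-coordinate), and that the infinite $\divideontimes$-product makes sense because its tail consists of copies of the identity $x_1$; granting these and the two cited results, the computation is purely formal.
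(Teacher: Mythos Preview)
Your proof is correct and follows essentially the same approach as the paper: transport $\Hol_a(\IZ/m\IZ)$ through the CRT isomorphism of Proposition~\ref{crtProp} onto the Cartesian product $\prod_i\Hol_{a_i}(\IZ/p_i^{k_i}\IZ)$, then invoke Proposition~\ref{gammaProp} and the $\divideontimes$-multiplicativity of cycle counters over direct products (which the paper handles by citing the proof of \cite[Theorem~2.4]{WX93a}, while you spell it out via Theorem~\ref{weiXuTheo}(1) and bilinearity). Your explicit verification that $x_1$ is the $\divideontimes$-identity, so that the infinite product collapses to a finite one, is a detail the paper leaves implicit.
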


\begin{proof}
The permutation group isomorphism $\Hol(\IZ/m\IZ)\rightarrow\prod_{p\in\IP}{\Hol(\IZ/p^{\nu_p(m)}\IZ)}$ from Proposition \ref{crtProp} maps the set $\Hol_a(\IZ/m\IZ)$ to the Cartesian product
\[
\prod_{p\in\IP}{\Hol_a(\IZ/p^{\nu_p(m)}\IZ)}.
\]
By Proposition \ref{gammaProp}, we know that for each prime $p$,
\[
\CC(\Hol_a(\IZ/p^{\nu_p(m)}\IZ))=\Gamma_{p^{\nu_p(m)}}(o_p),
\]
and the proof that
\[
\CC(\prod_{p\in\IP}{\Hol_a(\IZ/p^{\nu_p(m)}\IZ)})=\divideontimes_{p\in\IP}{\CC(\Hol_a(\IZ/p^{\nu_p(m)}\IZ))}
\]
is analogous to \cite[proof of Theorem 2.4]{WX93a}.
\end{proof}

Another crucial observation is that $\vec{\omicron}_m(a^{\ell})$ can be expressed in terms of $\vec{\omicron}_m(a)$ and $\ell$:

\begin{nottation}\label{powNot}
We define the notation $\pow$ as follows:
\begin{enumerate}
\item Let $p$ be a prime, let $k\in\IN$, let $\ell\in\IZ$, and let $o\in\Omega(p^k)$. Then
\begin{enumerate}
\item if $p>2$, we set $\pow_{p^k}(o,\ell):=\frac{o}{\gcd(o,\ell)}$.
\item if $p=2$, write $o=(\epsilon,o')$ where $\epsilon\in\{0,1\}$ and $o'\mid\max\{1,2^{k-2}\}$. We set
\[
\pow_{2^k}(o,\ell):=\begin{cases}(0,\frac{o}{\gcd(o,\ell)}), & \text{if }\epsilon=0,\text{ or }\epsilon=1\text{ and }2\mid\ell, \\ (1,\frac{o}{\gcd(o,\ell)}), & \text{if }\epsilon=1\text{ and }2\nmid\ell.\end{cases}
\]
\end{enumerate}
\item Let $m\in\IN^+$, let $\ell\in\IZ$, and let $\vec{\omicron}=(o_p)_{p\in\IP}\in\prod_{p\in\IP}{\Omega(p^{\nu_p(m)})}$. We set
\[
\pow_m(\vec{\omicron},\ell):=(\pow_{p^{\nu_p(m)}}(o_p,\ell))_{p\in\IP}.
\]
\end{enumerate}
\end{nottation}

The following is easy to check:

\begin{propposition}\label{powProp}
The following hold:
\begin{enumerate}
\item Let $p$ be a prime, let $k\in\IN$, let $\ell\in\IN^+$, and let $o\in\Omega(p^k)$. Then for each $a\in\IZ$ with $p\nmid a$ and $\omicron_{p^k}(a)=o$, we have $\omicron_{p^k}(a^{\ell})=\pow_{p^k}(o,\ell)$.
\item Let $m\in\IN^+$, let $\ell\in\IZ$, and let $\vec{\omicron}=(o_p)_{p\in\IP}\in\prod_{p\in\IP}{\Omega(p^{\nu_p(m)})}$. Then for each $a\in\IZ$ with $\gcd(m,a)=1$ and $\vec{\omicron}_m(a)=\vec{\omicron}$, we have $\vec{\omicron}_m(a^{\ell})=\pow_m(\vec{\omicron},\ell)$.
\end{enumerate}
\end{propposition}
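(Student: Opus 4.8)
The plan is to deduce Proposition \ref{powProp} from a single elementary group-theoretic fact: an element $g$ of finite order $n$ in any group satisfies $\ord(g^{\ell})=n/\gcd(n,\ell)$ for every $\ell\in\IZ$, with the convention $\gcd(n,0):=n$. Statement (2) will follow from statement (1) applied prime-by-prime, so the real content lies in statement (1), which I would prove by the same three-way case distinction on $p$ that underlies Notation \ref{omicronNot} itself.

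First, for $p>2$: here $\omicron_{p^k}(a)=\ord_{p^k}(a)=o$ by definition, and applying the elementary fact to $a$ inside the cyclic group $(\IZ/p^k\IZ)^{\ast}$ gives $\omicron_{p^k}(a^{\ell})=\ord_{p^k}(a^{\ell})=o/\gcd(o,\ell)$, which is exactly $\pow_{p^k}(o,\ell)$. The degenerate case $p=2$, $k\leq1$ is immediate, since the unit group is trivial and both sides equal $(0,1)$.

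The main case — and the only one needing genuine bookkeeping — is $p=2$, $k\geq3$ (together with the small analogue $k=2$). Here I would use Lemma \ref{primaryCyclicLem}(2) to write $(\IZ/2^k\IZ)^{\ast}=\langle-1\rangle\times\langle5\rangle$ and $a=(-1)^{\epsilon}5^e$, so that $\omicron_{2^k}(a)=(\epsilon,\ord_{2^k}(5^e))$. Raising to the $\ell$-th power gives $a^{\ell}=(-1)^{\epsilon\ell}5^{e\ell}$; the $\langle-1\rangle$-coordinate of $\omicron_{2^k}(a^{\ell})$ is then the parity of $\epsilon\ell$ — equal to $0$ unless $\epsilon=1$ and $\ell$ is odd — while the $\langle5\rangle$-coordinate is $\ord_{2^k}((5^e)^{\ell})=\ord_{2^k}(5^e)/\gcd(\ord_{2^k}(5^e),\ell)$ by the elementary fact applied inside $\langle5\rangle$. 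Matching these two coordinates against the definition of $\pow_{2^k}$ finishes statement (1). The hard part, such as it is, is just keeping the two coordinates of $\omicron_{2^k}$ straight and checking that the sign exponent behaves additively, so that its parity is multiplied by $\ell$; no new idea is required.

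Finally, for statement (2) I would note that both $\vec{\omicron}_m(\cdot)$ and $\pow_m(\cdot,\ell)$ are defined coordinate-wise over the primes, that $\gcd(m,a)=1$ forces $p\nmid a$ for every $p\mid m$, and that for $p\nmid m$ the corresponding coordinate is trivially fixed; applying statement (1) in each coordinate then yields $\vec{\omicron}_m(a^{\ell})=\pow_m(\vec{\omicron},\ell)$, and the same argument works for all $\ell\in\IZ$ under the convention $\gcd(\cdot,0):=\cdot$.
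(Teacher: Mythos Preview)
Your proposal is correct and is precisely the routine verification the paper has in mind when it says ``The following is easy to check'' without giving a proof. The only thing worth noting is that in Notation~\ref{powNot}(1)(b) the paper writes $\frac{o}{\gcd(o,\ell)}$ where it clearly intends $\frac{o'}{\gcd(o',\ell)}$; your argument uses the intended meaning, as it should.
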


We now discuss how all of this relates to the problem of computing the cycle index of $W(d,m)$, for positive integers $d$ and $m$. Note that
\[
|W_=(d,m)|=d!\cdot\phi(m)\cdot m^d,
\]
whence it suffices to compute the cycle counter $\CC(W(d,m))$. Assume that
\[
\vec{\omicron}_m(a)=\vec{\omicron}=(o_p)_{p\in\IP}\in\prod_{p\in\IP}{\Omega(p^{\nu_p(m)})}
\]
is fixed. We denote the corresponding subset of $W_=(d,m)$ by $W_{\vec{\omicron}}(d,m)$ and note that
\[
W_=(d,m)=\bigsqcup_{\vec{\omicron}}{W_{\vec{\omicron}}(d,m)}\text{ (disjoint union) and }\CC(W_=(d,m))=\sum_{\vec{\omicron}}{\CC(W_{\vec{\omicron}}(d,m))}.
\]
In order to determine $\CC(W_{\vec{\omicron}}(d,m))$, let us fix $\psi$ up to $\Sym(d)$-conjugacy. That is, we fix a $d$-tuple $\lambda=(\lambda_1,\ldots,\lambda_d)\in\IN^d$ such that $\sum_{i=1}^d{i\lambda_i}=d$, and assume that $\psi$ has $i$ cycles of length $\lambda_i$ for each $i$. Let $\ell\in\{1,\ldots,d\}$. It is not hard to check that the possible forward cycle product classes of $g$ with regard to the length $\ell$ cycles of $\psi$ are of the form $\lambda(a^{\ell},b)^{\Hol(\IZ/m\IZ)}$, where the elements $b\in\IZ/m\IZ$ may be chosen independently of each other for different cycles (of arbitrary lengths, not just $\ell$). Since $\vec{\omicron}_m(a)=\vec{\omicron}$, Proposition \ref{powProp}(2) lets us conclude that for each $\ell\in\{1,\ldots,d\}$, we have $\vec{\omicron}_m(a^{\ell})=\pow_m(\vec{\omicron},\ell)$, and by Proposition \ref{holaProp},
\[
\CC(\{\lambda(a^{\ell},b):b\in\IZ/m\IZ\})=\divideontimes_{p\in\IP}{\Gamma_{p^{\nu_p(m)}}(\pow_m(o_p,\ell))}.
\]
View this as a polynomial in the variables $x_1,x_2,\ldots,x_m$, and set
\[
\Delta_m(\vec{o},\ell):=(\divideontimes_{p\in\IP}{\Gamma_{p^{\nu_p(m)}}(\pow_m(o_p,\ell))})(x_{\ell},x_{2\ell},\ldots,x_{m\ell}).
\]
Using Lemma \ref{polyaLem} and the definition of the cycle counter, we conclude that
\[
\CC(W_{\vec{\omicron}}(d,m))=|\{a\in(\IZ/m\IZ)^{\ast}:\vec{\omicron}_m(a)=\vec{\omicron}\}|\cdot\CC(\Sym(d))(\Delta_m(\vec{o},1),\ldots,\Delta_m(\vec{o},d)).
\]
For the sake of notational simplicity, we introduce the following notation:

\begin{nottation}\label{nNot}
Let $m$ be a positive integer.
\begin{enumerate}
\item Assume that $m=p^k$ is a power of a prime $p$. Let $o\in\Omega(m)=\Omega(p^k)$. We set
\[
n_m(o)=n_{p^k}(o):=\begin{cases}\phi(o), & \text{if }p>2, \\ \phi(o'), & \text{if }p=2,o=(\epsilon,o').\end{cases}
\]
\item Let $\vec{\omicron}=(o_p)_{p\in\IP}\in\prod_{p\in\IP}{\Omega(p^{\nu_p(m)})}$. We set
\[
n_m(\vec{\omicron}):=\prod_{p\in\IP}{n_{p^{\nu_p(m)}}(o_p)}.
\]
\end{enumerate}
\end{nottation}

The following is not hard to check:

\begin{propposition}\label{nProp}
Let $m$ be a positive integer.
\begin{enumerate}
\item If $m=p^k$ is a power of a prime $p$ and $o\in\Omega(m)$, then
\[
n_m(o)=|\{a\in(\IZ/m\IZ)^{\ast}: \omicron_m(a)=o\}|.
\]
\item If $\vec{\omicron}\in\prod_{p\in\IP}{\Omega(p^{\nu_p(m)})}$, then
\[
n_m(\vec{\omicron})=|\{a\in(\IZ/m\IZ)^{\ast}: \vec{\omicron}_m(a)=\vec{\omicron}\}|.
\]
\end{enumerate}
\end{propposition}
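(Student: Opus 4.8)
The plan is to prove both parts by a direct count in the unit groups $(\IZ/p^k\IZ)^{\ast}$, using their (almost) cyclic structure, and then to glue the local counts together with the Chinese Remainder Theorem.

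First I would dispose of part (1) by the same three-way case split that is already built into Notations \ref{omicronNot} and \ref{nNot}. When $p$ is odd, $(\IZ/p^k\IZ)^{\ast}$ is cyclic of order $\phi(p^k)$, and since $o\in\Omega(p^k)$ is by assumption a divisor of $\phi(p^k)$, the elementary fact that a cyclic group of order $N$ has exactly $\phi(o)$ elements of order $o$ for each divisor $o$ of $N$ gives $|\{a\in(\IZ/p^k\IZ)^{\ast}:\ord_{p^k}(a)=o\}|=\phi(o)=n_{p^k}(o)$. When $p=2$ and $k\leq 1$ the unit group is trivial, $\Omega(2^k)=\{(0,1)\}$, and both sides of the claimed identity equal $1$. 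When $p=2$ and $k\geq 2$, Lemma \ref{primaryCyclicLem}(2) says that $(\IZ/2^k\IZ)^{\ast}=\langle -1\rangle\times\langle 5\rangle$ with the two factors cyclic of orders $2$ and $2^{k-2}$; hence each unit has a unique representation $(-1)^{\epsilon}5^{j}$, so the condition $\omicron_{2^k}(a)=(\epsilon,o')$ forces the first component ($1$ choice) and requires $\ord_{2^k}(5^j)=o'$, which (as $\langle 5\rangle$ is cyclic of order $2^{k-2}$ and $o'\mid 2^{k-2}$) holds for exactly $\phi(o')$ values of $j$. This gives $n_{2^k}((\epsilon,o'))=\phi(o')$, as claimed.

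Then part (2) follows formally from part (1): the canonical isomorphism $(\IZ/m\IZ)^{\ast}\cong\prod_{p\in\IP}(\IZ/p^{\nu_p(m)}\IZ)^{\ast}$ sends $a$ to the tuple $(a_p)_{p\in\IP}$ of its reductions modulo $p^{\nu_p(m)}$, and by the definition of $\vec{\omicron}_m$ one has $\vec{\omicron}_m(a)=\vec{\omicron}$ if and only if $\omicron_{p^{\nu_p(m)}}(a_p)=o_p$ for every prime $p$. Since the components $a_p$ vary independently over their respective unit groups, the count factorizes: $|\{a:\vec{\omicron}_m(a)=\vec{\omicron}\}|=\prod_{p\in\IP}|\{a_p:\omicron_{p^{\nu_p(m)}}(a_p)=o_p\}|=\prod_{p\in\IP}n_{p^{\nu_p(m)}}(o_p)=n_m(\vec{\omicron})$, where the middle equality is part (1) and the last one is Notation \ref{nNot}(2).

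I do not expect a genuine obstacle here; the only points needing attention are quoting the correct structure theorem for $(\IZ/2^k\IZ)^{\ast}$ — which is exactly Lemma \ref{primaryCyclicLem}(2), and which is also what makes $\omicron_{2^k}$ well-defined so that the count of the ``$5$-part'' is a bona fide cyclic count — and, in part (2), observing that all but finitely many local factors are trivial (those with $p\nmid m$ contribute $n_{p^0}=1$), so the product is finite and matches the one in Notation \ref{nNot}(2). Both are routine.
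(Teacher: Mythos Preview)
Your argument is correct and is exactly the routine verification the paper has in mind; the paper itself omits the proof entirely, saying only that it ``is not hard to check''. Your case split for part (1) and the Chinese Remainder factorization for part (2) are the natural way to carry this out, and your observation that the primes with $p\nmid m$ contribute trivially is precisely what makes the infinite product in Notation \ref{nNot}(2) well-defined.
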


Putting everything together, we obtain the following:

\begin{propposition}\label{CPProp}
Let $d$ and $m$ be positive integers. Then
\[
\CI(W_=(d,m))=\frac{1}{d!\phi(m)m^d}\sum_{\vec{\omicron}}{n_m(\vec{o})\CC(\Sym(d))(\Delta_m(\vec{o},1),\ldots,\Delta_m(\vec{o},d))},
\]
where $\omicron$ ranges over the set $\prod_{p\in\IP}{\Omega(p^{\nu_p(m)})}$. Moreover, if $q$ is a prime power with $d\mid q-1$, then
\[
\CI(\CP(d,q))=\CI(W(d,\frac{q-1}{d})).
\]
\end{propposition}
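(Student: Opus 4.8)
The plan is to assemble the main formula from the ingredients prepared in the discussion immediately preceding the statement, following the same strategy as in the proof of Theorem \ref{polyaTheo}. Since each unit $a\in(\IZ/m\IZ)^{\ast}$ has a well-defined invariant $\vec{\omicron}_m(a)$, the group $W_=(d,m)$ is the disjoint union $\bigsqcup_{\vec{\omicron}}W_{\vec{\omicron}}(d,m)$ over $\vec{\omicron}\in\prod_{p\in\IP}\Omega(p^{\nu_p(m)})$, so $\CC(W_=(d,m))=\sum_{\vec{\omicron}}\CC(W_{\vec{\omicron}}(d,m))$; since $|W_=(d,m)|=d!\,\phi(m)\,m^d$, it then suffices to compute each cycle counter $\CC(W_{\vec{\omicron}}(d,m))$ and divide by $d!\,\phi(m)\,m^d$. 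To compute $\CC(W_{\vec{\omicron}}(d,m))$ I would fix $\vec{\omicron}$, fix a cycle type $(\lambda_1,\dots,\lambda_d)$ of permutations $\psi\in\Sym(d)$, and fix a unit $a$ with $\vec{\omicron}_m(a)=\vec{\omicron}$; for $g=(\psi,(\lambda(a,b_i))_{i})$ with the translation parts $b_i$ still free, Lemma \ref{polyaLem} writes $\CT(g)$ as the product over the cycles $\zeta$ of $\psi$ of $\CT^{(\ell(\zeta))}(\fcp_{\zeta}(g))$, and each forward cycle product is an affine permutation $\lambda(a^{\ell(\zeta)},c_{\zeta})\in\Hol(\IZ/m\IZ)$ whose translation part $c_{\zeta}$ depends only on those $b_i$ with $i$ lying on $\zeta$.

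The heart of the argument is then the uniform-and-independent observation from the proof of Theorem \ref{polyaTheo}, transplanted to this setting: as $(b_i)_{i}$ ranges over $(\IZ/m\IZ)^{d}$, the forward cycle products $\fcp_{\zeta}(g)$ over the various cycles $\zeta$ of $\psi$ are independent, and each one is uniformly distributed over the coset $\Hol_{a^{\ell(\zeta)}}(\IZ/m\IZ)$ --- here one uses that, in the semidirect product $\Hol(\IZ/m\IZ)=(\IZ/m\IZ)^{\ast}\ltimes\IZ/m\IZ$, the translation part $c_{\zeta}$ is a linear form in the $b_i$ on $\zeta$ with a unit coefficient, hence surjective onto $\IZ/m\IZ$ with all fibres of the same size. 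Feeding this into Proposition \ref{powProp}(2), which gives $\vec{\omicron}_m(a^{\ell})=\pow_m(\vec{\omicron},\ell)$, and Proposition \ref{holaProp}, which identifies $\CC(\Hol_{a^{\ell}}(\IZ/m\IZ))$ with $\divideontimes_{p\in\IP}\Gamma_{p^{\nu_p(m)}}(\pow_m(o_p,\ell))$, shows that the contribution of the length-$\ell$ cycles of $\psi$ is governed by $\Delta_m(\vec{\omicron},\ell)$. Collecting these contributions over all cycles of $\psi$ and over all $\psi$ of each cycle type --- which is exactly the combinatorial content encoded by $\CC(\Sym(d))$ evaluated at $(\Delta_m(\vec{\omicron},1),\dots,\Delta_m(\vec{\omicron},d))$ --- and multiplying by the number $n_m(\vec{\omicron})$ of admissible units $a$ (Proposition \ref{nProp}(2)) yields $\CC(W_{\vec{\omicron}}(d,m))$. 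Summing over $\vec{\omicron}$ and dividing by $d!\,\phi(m)\,m^d$ produces the displayed formula.

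For the ``Moreover'' clause, recall from Theorem \ref{wreathIsoTheo}(1) that with $m=\frac{q-1}{d}$ and $C$ identified with $\IZ/m\IZ$, the pre-image of $\CP(d,q)$ under $\iota_{\omega}$ is precisely the subgroup $W_=(d,\frac{q-1}{d})$; since $(\beta_{\omega},\iota_{\omega})$ is an isomorphism of permutation groups and isomorphic permutation groups have equal cycle indices, we get $\CI(\CP(d,q))=\CI(W_=(d,\frac{q-1}{d}))$.

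I expect the main obstacle to be the careful bookkeeping in the middle step: verifying the independence and uniformity of the forward cycle products, keeping the translation between the cycle counters $\CC$ and the cycle indices $\CI$ straight, and accounting exactly for the normalizing powers of $m$ introduced by the free translation coordinates on each cycle. By contrast, all the genuinely number-theoretic input --- the cycle types of individual affine permutations of $\IZ/p^k\IZ$, the multiplicativity over prime powers via the $\divideontimes$-product, and the effect of raising $a$ to a power --- has already been isolated in Propositions \ref{gammaProp}, \ref{holaProp}, \ref{powProp} and \ref{nProp}, so what remains is essentially organizational.
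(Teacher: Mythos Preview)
Your proposal is correct and follows essentially the same approach as the paper: the proposition is stated in the paper as a summary of the preceding discussion (``Putting everything together, we obtain the following''), and that discussion proceeds exactly as you outline --- decompose $W_=(d,m)$ as $\bigsqcup_{\vec{\omicron}}W_{\vec{\omicron}}(d,m)$, fix $\vec{\omicron}$ and the cycle type of $\psi$, observe that the forward cycle products along the cycles of $\psi$ are independent and uniform over $\Hol_{a^{\ell}}(\IZ/m\IZ)$, invoke Propositions \ref{powProp}(2), \ref{holaProp} and \ref{nProp}(2) together with Lemma \ref{polyaLem}, and package the result via $\CC(\Sym(d))$ evaluated at the $\Delta_m(\vec{\omicron},\ell)$. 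Your treatment of the ``Moreover'' clause via Theorem \ref{wreathIsoTheo}(1) is likewise the intended one (and indeed the displayed formula in the statement should read $W_=(d,\frac{q-1}{d})$, as you have it).
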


\begin{exxample}\label{CPEx}
We compute
\[
\CI(\CP(2,25))=\CI(W_=(2,12)).
\]
Note that by the formula after Notation \ref{omicronNot},
\[
\Omega(4)=\{(0,1),(1,1)\}\text{ and }\Omega(3)=\{1,2\}.
\]
We need to go through the $2\cdot2=4$ possible combinations $\vec{\omicron}=(o_2,o_3)\in\Omega(4)\times\Omega(3)$ and compute the respective summand in the formula from Proposition \ref{CPProp}. We discuss one case in detail, namely $o_2=(0,1)$ and $o_3=1$. Then
\[
n_{12}(\vec{\omicron})=\phi(1)\cdot\phi(1)=1,
\]
and
\begin{align*}
\Delta_{12}(\vec{\omicron},1)&=\Gamma_4(\pow_4(o_2,1))\divideontimes\Gamma_3(\pow_3(o_3,1)) \\
&=\Gamma_4((0,1))\divideontimes\Gamma_3(1) \\
&=(x_1^4+x_2^4+x_4)\divideontimes(x_1^3+2x_3) \\
&=x_1^{12}+2x_3^4+x_2^6+2x_6^2+x_4^3+2x_{12}
\end{align*}
as well as
\begin{align*}
\Delta_{12}(\vec{\omicron},2)&=(\Gamma_4(\pow_4(o_2,2))\divideontimes\Gamma_3(\pow_3(o_3,2)))(x_i\mapsto x_{2i}) \\
&=(\Gamma_4((0,1))\divideontimes\Gamma_3(1))(x_i\mapsto x_{2i}) \\
&=x_2^{12}+2x_6^4+x_4^6+2x_{12}^2+x_8^3+2x_{24};
\end{align*}
see Definition \ref{refProdDef} and Example \ref{permProductEx} on how to compute $\divideontimes$-products.

Table \ref{deltaTable} summarizes the values of $n_{12}(\vec{\omicron})$, $\Delta_{12}(\vec{\omicron},1)$ and $\Delta_{12}(\vec{\omicron},2)$ for each of the four possible values of $\vec{\omicron}$.

\begin{table}[h]\centering
\begin{tabular}{|c|c|c|c|c|}\hline
$o_2$ & $o_3$ & $n_{12}(\vec{\omicron})$ & $\Delta_{12}(\vec{\omicron},1)$ & $\Delta_{12}(\vec{\omicron},2)$ \\ \hline
$(0,1)$ & $1$ & $1$ & $x_1^{12}+2x_3^4+x_2^6+2x_6^2+x_4^3+2x_{12}$ & $x_2^{12}+2x_6^4+x_4^6+2x_{12}^2+x_8^3+2x_{24}$ \\ \hline
$(0,1)$ & $2$ & $1$ & $3x_1^4x_2^4+3x_2^6+3x_4^3$ & $x_2^{12}+2x_6^4+x_4^6+2x_{12}^2+x_8^3+2x_{24}$ \\ \hline
$(1,1)$ & $1$ & $1$ & $2x_1^6x_2^3+4x_3^2x_6+2x_2^6+4x_6^2$ & $x_2^{12}+2x_6^4+x_4^6+2x_{12}^2+x_8^3+2x_{24}$ \\ \hline
$(1,1)$ & $2$ & $1$ & $6x_1^2x_2^5+6x_2^6$ & $x_2^{12}+2x_6^4+x_4^6+2x_{12}^2+x_8^3+2x_{24}$ \\ \hline
\end{tabular}
\caption{Relevant data for each of the four values of $\vec{\omicron}=(o_2,o_3)$.}
\label{deltaTable}
\end{table}

According to Proposition \ref{CPProp},
\[
\CI(\CP(2,25))=\CI(W_=(2,12))=\frac{1}{2\phi(12)12^2}\sum_{\vec{\omicron}=(o_2,o_3)}{(\Delta_{12}(\vec{\omicron},1)^2+\Delta_{12}(\vec{\omicron},2))}.
\]
Evaluating this expression with GAP \cite{GAP4}, we obtain a homogeneous polynomial $Q_3\in\IQ[x_1,\ldots,x_{24}]$ with $32$ terms, which we do not display.
\end{exxample}

\subsection{Cycle types of individual generalized cyclotomic permutations}\label{subsec5PX}

Let $d$ be a positive integer, let $q$ be a prime power with $d\mid q-1$, let $\omega$ be a primitive root of $\IF_q$, let $C$ be the index $d$ subgroup of $\IF_q^{\ast}$, and set $C_i:=\omega^iC$ for $i=0,1,\ldots,d-1$. Consider Algorithm \ref{algo2}, and observe that it is correct:

\begin{algorithm}
\SetKwInOut{Input}{input}\SetKwInOut{Output}{output}

\Input{A polynomial $P\in\IF_q[T]$ with $\deg{P}\leq q-1$.}
\Output{The information whether $P$ is the polynomial form of a generalized index $d$ cyclotomic \emph{permutation} $f$ of $\IF_q$ and, if so, an $\omega$-cyclotomic form of $f$ and the unique permutation $\psi\in\Sym(d)$ with $f(C_i)=C_{\psi(i)}$ for $i=0,1,\ldots,d-1$.}
\BlankLine
\nl Apply Algorithm \ref{algo1} to $P$ to see if the function $f:\IF_q\rightarrow\IF_q$ with polynomial form $P$ is an index $d$ generalized cyclotomic mapping of $\IF_q$ and, if so, find an $\omega$-cyclotomic form $f=f_{\omega}(\vec{a},\vec{r})$.

\nl If $f$ is not an index $d$ generalized cyclotomic mapping of $\IF_q$, output \enquote{$P$ is not the polynomial form of any index $d$ generalized cyclotomic permutation of $\IF_q$.}, and halt.

\nl Write $\vec{a}=(a_0,\ldots,a_{d-1})$ and $\vec{r}=(r_0,\ldots,r_{d-1})$. If there is an $i\in\{0,\ldots,d-1\}$ such that $a_i=0$ or $\gcd(r_i,\frac{q-1}{d})>1$, output \enquote{$P$ is not the polynomial form of any index $d$ generalized cyclotomic permutation of $\IF_q$.}, and halt.

\nl Set $L:=\emptyset$ (the empty list).

\nl \For{$i=0,1,\ldots,d-1$}{

\nl Set $y_i:=a_i\omega^{r_ii}$.

\nl \For{$j=0,1,\ldots,d-1$}{

\nl \If{$(\omega^{-j}y_i)^{\frac{q-1}{d}}=1$}{

\nl Add $j$ to $L$ (as a new last entry), and exit the (inner) loop.
}
}
}

\nl Let $\psi:\{0,1,\ldots,d-1\}\rightarrow\{0,1,\ldots,d-1\}$ be the function such that $\psi(i)$ is the $i$-th entry of $L$.

\nl If $\psi$ is not a permutation of $\{0,1,\ldots,d-1\}$, output \enquote{$P$ is not the polynomial form of any index $d$ generalized cyclotomic permutation of $\IF_q$.}, and halt.

\nl Output \enquote{$P$ is the polynomial form of the index $d$ cyclotomic permutation of $\IF_q$ with $\omega$-cyclotomic form $f_{\omega}(\vec{a},\vec{r})$, permuting the cosets of $C$ according to $\psi$.}, and halt.

\caption{Extended conversion from polynomial form to cyclotomic form for permutations.}\label{algo2}
\end{algorithm}

\begin{itemize}
\item In step 3, if $a_i=0$, then $f$ maps $C_i\subseteq\IF_q^{\ast}$ to $\{0\}$. Since $f(0)=0$, this implies that $f$ is not injective, hence not a permutation of $\IF_q$. Moreover, $\gcd(r_i,\frac{q-1}{d})=1$ for each $i$ is necessary for $f$ to be a permutation by \cite[Theorem 1.2]{Wan13a}.
\item Beyond step 3, we know that $f$ maps cosets of $C$ onto full cosets of $C$, and so the only potentially remaining obstacle for $f$ to be a permutation is when two distinct cosets get mapped onto the same image coset. That is, we need to check whether the induced function $\tilde{f}$ on cosets is a permutation.
\item To check this, we map, for each $i=0,1,\ldots,d-1$, the coset representative $\omega^i$ of $C_i=\omega^iC$ under $f$, resulting in $y_i$, and then we check for which $j\in\{0,1,\ldots,d-1\}$ it is the case that $y_i\in C_j=\omega^jC$, or equivalently (using that $C=\{x\in\IF_q^{\ast}: x^{\frac{q-1}{d}}=1\}$), $(\omega^{-j}y_i)^{\frac{q-1}{d}}=1$. This $j$ is unique and equals $\tilde{f}(i)$. Therefore, the function $\psi$ defined in step 10 is equal to $\tilde{f}$.
\end{itemize}

We note that our approach for computing $\psi$ in Algorithm \ref{algo2} is a variant of the bijectivity criterion for index $d$ generalized cyclotomic mappings from \cite[Theorem 1.2]{Wan13a}. The advantage of our approach is that it avoids using discrete logarithms, which makes Algorithm \ref{algo2} efficient.

Once we know that $P$ represents an index $d$ generalized cyclotomic permutation $f$ of $\IF_q$, and we know the cyclotomic form of $f$ and $\psi$, we can apply Theorem \ref{wreathIsoTheo} to compute (also efficiently) the $\omega$-wreath product form of $f$. Form there, we can obtain the cycle type of $f$ using Lemma \ref{polyaLem} and the theory from Section \ref{sec3}, but this is inefficient (at least on non-quantum computers), as it requires us to find the prime factorization of $\frac{q-1}{d}$. We conclude this subsection with an illustrative example:

\begin{exxample}\label{unnumberedEx}
Let $d=2$ and $q=25$. Moreover, let $\omega\in\IF_{25}$ be a root of the Conway polynomial $T^2-T+2\in\IF_5[T]$. Consider the polynomial
\[
P:=\omega^{15}T^5+\omega^{23}T^7+\omega^3T^{17}+\omega^{23}T^{19}\in\IF_{25}[T].
\]
We check whether $P$ is the polynomial form of an index $2$ generalized cyclotomic permutation of $\IF_{25}$ and, if so, determine its cycle type.

We start by applying Algorithm \ref{algo1}. Observe that in step 4, we have $k=2$, $\rho_0=5$ and $\rho_1=7$. In step 5,
\[
\vec{v_0}=\begin{pmatrix}\omega^{15} \\ \omega^3\end{pmatrix}\text{ and }\vec{v_1}=\begin{pmatrix}\omega^{23} \\ \omega^{23}\end{pmatrix},
\]
and so in step 6 (noting that $\zeta=\omega^{12}=-1$ here),
\[
\vec{b_0}=2\cdot\begin{pmatrix}1 & 1 \\ 1 & -1\end{pmatrix}^{-1}\cdot\vec{v_0}=\begin{pmatrix}0 \\ \omega^{21}\end{pmatrix}
\]
and
\[
\vec{b_1}=2\cdot\begin{pmatrix}1 & 1 \\ 1 & -1\end{pmatrix}^{-1}\cdot\vec{v_1}=\begin{pmatrix}\omega^5 \\ 0\end{pmatrix}.
\]
It follows from steps 7--9 that $P$ indeed represents an index $2$ generalized cyclotomic permutation $f_{\omega}(\vec{a},\vec{r})$ of $\IF_{25}$, with
\[
\vec{a}=(a_0,a_1)=(\omega^5,\omega^{21})
\]
and
\[
\vec{r}=(r_0,r_1)=(7,5).
\]
Observe that each $a_i$ is nonzero, and that $\gcd(12,r_0)=\gcd(12,r_1)=1$. Next, we compute (following Algorithm \ref{algo2}) the function $\psi:\{0,1\}\rightarrow\{0,1\}$ that encodes the mapping behavior of $f_{\omega}(\vec{a},\vec{r})$ on cosets $C_i=\omega^iC$ (for $i=0,1$) of the index $2$ subgroup $C$ of $\IF_{25}^{\ast}$:
\begin{itemize}
\item $y_0=a_0\omega^{r_00}=\omega^5$. Hence $y_0^{12}=\omega^{60}\not=1$ (because $24\nmid60$), but $(\omega^{-1}y_0)^{12}=\omega^{48}=1$. Hence $y_0\in C_1$, and so $\psi(0)=1$.
\item $y_1=a_1\omega^{r_11}=\omega^{21}\cdot\omega^5=\omega^{26}=\omega^2$. We have $y_1^{12}=\omega^{24}=1$, whence $y_1\in C_0$ and $\psi(1)=0$.
\end{itemize}
We see that $\psi$ is the nontrivial permutation $(0,1)$ on $\{0,1\}$. In particular, we conclude that $f$ is a permutation on $\IF_q$. Moreover, we can compute the $\omega$-wreath product form of $f$ (an element of $\Hol(C)\wr_{\imp}\Sym(2)$) according to Theorem \ref{wreathIsoTheo} as follows:
\begin{align*}
&\iota_{\omega}^{-1}(f_{\omega}((\omega^5,\omega^{21}),(7,5)))=((0,1),(\lambda(5,\omega^{5\cdot1-0}\cdot\omega^{21}),\lambda(7,\omega^{7\cdot0-1}\cdot\omega^5)))= \\
&((0,1),(\lambda(5,\omega^2),\lambda(7,\omega^4))).
\end{align*}
When rewritten into an element of $\Hol(\IZ/12\IZ)\wr_{\imp}\Sym(2)$ (additive notation, with $1+12\IZ$ corresponding to the generator $\omega^2$ of $C$), this becomes
\[
((0,1),(\lambda(5,\frac{2}{2}),\lambda(7,\frac{4}{2})))=((0,1),(\lambda(5,1),\lambda(7,2))).
\]
We compute the cycle type of this according to Lemma \ref{polyaLem}. For this, we need to determine the cycle type (on $\IZ/12\IZ$) of the forward cycle product
\[
\lambda(5,1)\cdot\lambda(7,2)=\lambda(5\cdot7,7\cdot1+2)=\lambda(35,9)=\lambda(-1,9).
\]
For greater clarity in what follows, let us write $\lambda_m(a,b)$ for the affine function $x\mapsto ax+b$ on $\IZ/m\IZ$. By Proposition \ref{crtProp}, we have a permutation group isomorphism
\[
\Hol(\IZ/12\IZ)\rightarrow\Hol(\IZ/4\IZ)\times\Hol(\IZ/3\IZ),
\]
under which $\lambda_{12}(-1,9)$ corresponds to the pair
\[
(\lambda_4(-1,9),\lambda_3(-1,9))=(\lambda_4(3,1),\lambda_3(2,0)).
\]
According to Tables \ref{table1} and \ref{table2} in Section \ref{sec3},
\[
\CT(\lambda_4(3,1))=x_2^2\text{ and }\CT(\lambda_3(2,0))=x_1x_2.
\]
By Theorem \ref{weiXuTheo}(1),
\begin{align*}
\CT(\lambda_{12}(-1,9))&=\CT((\lambda_4(3,1),\lambda_3(2,0)))=\CT(\lambda_4(3,1))\divideontimes\CT(\lambda_3(2,0)) \\
&=x_2^2\divideontimes x_1x_2=x_2^6.
\end{align*}
Finally, by Lemma \ref{polyaLem},
\begin{align*}
&\CT(f_{\omega}((\omega^5,\omega^{21}),(7,5)))=\CT(((0,1),(\lambda_{12}(5,1),\lambda_{12}(7,2)))) \\
&=\CT^{(2)}(\lambda_{12}(5,1)\cdot\lambda_{12}(7,2))=\CT^{(2)}(\lambda_{12}(-1,9))=x_4^6.
\end{align*}
That is, the index $2$ generalized cyclotomic permutation of $\IF_{25}$ represented by $P$ consists of $6$ cycles of length $4$ (on $\IF_{25}^{\ast}$) as well as the trivial fixed point $0$.
\end{exxample}

\subsection{Classification of long-cycle elements and of involutions up to conjugacy}\label{subsec5P2}

Let $d$ be a positive integer and let $q$ be a prime power with $d\mid q-1$. Moreover, let $G$ be one of the permutation groups $\GCP(d,q)$, $\CP(d,q)$ or $\FOCP(d,q)$ on $\IF_q^{\ast}$ (defined in Subsection \ref{subsec1P1}). The goal of this subsection is to classify the conjugacy classes in $G$ whose elements are $(q-1)$-cycles (so-called \emph{long-cycle elements of $G$}) as well as the conjugacy classes in $G$ whose elements are involutions (i.e., elements $g\in G$ with $g^2=1_G$). We will do so by specifying complete systems of conjugacy class representatives in cyclotomic form. Our interest in this classification problem stems from the significance of long-cycle elements and involutions in practical applications such as pseudorandom number generation and cryptography, see \cite{CMS16a,NLQW20a,ZYLHZ19a} for involutions and \cite[beginning of Section 7.2 on p.~164]{Nie92a} for long cycles.

As in Subsection \ref{subsec5P1}, we consider the slightly more general problem of classifying conjugacy classes of long-cycle elements and of involutions in the groups $W(d,m)$, $W_=(d,m)$ and $W_1(d,m)$ from Notation \ref{wGroupNot}. The following result, which characterizes long-cycle elements and involutions in $W(d,m)$, is useful in all three cases:

\begin{lemmma}\label{characLem}
Let $d$ and $m$ be positive integers, and let
\[
g=(\psi,(g_0,g_1,\ldots,g_{d-1}))\in W(d,m)
\]
with $g_i=\lambda(a_i,b_i)\in\Hol(\IZ/m\IZ)$ for $i=0,1,\ldots,d-1$.
\begin{enumerate}
\item $g$ is a $(dm)$-cycle if and only if $\psi=(i_0,i_1,\ldots,i_{d-1})$ is a $d$-cycle, written such that $i_0=0$, and the forward cycle product
\[
g_{i_0}g_{i_1}\cdots g_{i_{d-1}}=\prod_{j=0}^{d-1}{\lambda(a_{i_j},b_{i_j})}=\lambda(\prod_{j=0}^{d-1}{a_j},\sum_{j=0}^{d-1}{(\prod_{k=j+1}^{d-1}{a_{i_k}})b_{i_j}})
\]
is a long-cycle element of $\Hol(\IZ/m\IZ)$, i.e.,
\begin{enumerate}
\item $\prod_{j=0}^{d-1}{a_j}\equiv1\Mod{\rad'(m)}$ and
\item $\gcd(m,\sum_{j=0}^{d-1}{(\prod_{k=j+1}^{d-1}{a_{i_k}})b_{i_j}})=1$.
\end{enumerate}
\item $g$ is an involution if and only if
\begin{equation}\label{psiEq}
\psi=(i_0,i_1)(i_2,i_3)\cdots(i_{2k-2},i_{2k-1})(i_{2k})(i_{2k+1})\cdots(i_{d-1})
\end{equation}
is an involution in $\Sym(d)$ and the following hold:
\begin{enumerate}
\item For $j=0,1,\ldots,k$, we have that $g_{i_{2j+1}}=g_{i_{2j}}^{-1}$ (equivalently, $\fcp_{(i_{2j},i_{2j+1})}(g)=\id_{\IZ/m\IZ}$).
\item For $j=2k,2k+1,\ldots,d-1$, we have that $g_{i_j}=\fcp_{(i_j)}(g)$ is an involution in $\Hol(\IZ/m\IZ)$.
\end{enumerate}
\end{enumerate}
\end{lemmma}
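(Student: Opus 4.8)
The plan is to deduce both statements from Lemma \ref{polyaLem}, which writes $\CT(g)$ as the product, over the cycles $\zeta$ of $\psi$, of the \enquote{stretched} cycle types $\CT^{(\ell(\zeta))}(\fcp_{\zeta}(g))$; here $\CT^{(\ell)}$ relabels each variable $x_i$ as $x_{i\ell}$, hence maps monomials bijectively onto monomials whose variables all have subscript a positive multiple of $\ell$. Besides this, the only inputs I would need are Knuth's Theorem (Theorem \ref{knuthTheo}) for part (1), and for part (2) the elementary fact that an element of a finite permutation group is an involution exactly when its cycle type is a monomial in $x_1,x_2$ alone. One preliminary computation is used throughout: if $\zeta=(i_0,\ldots,i_{\ell-1})$ is a cycle of $\psi$ and $g_{i_j}=\lambda(a_{i_j},b_{i_j})$, then a short induction on $\ell$ (using that, under the paper's right-action convention, the multiplication in $\Hol(\IZ/m\IZ)$ is $\lambda(a,b)\lambda(a',b')=\lambda(aa',a'b+b')$) yields
\[
\fcp_{\zeta}(g)=\lambda\Bigl(\prod_{j=0}^{\ell-1}a_{i_j},\ \sum_{j=0}^{\ell-1}\Bigl(\prod_{k=j+1}^{\ell-1}a_{i_k}\Bigr)b_{i_j}\Bigr),
\]
which is the formula already displayed in the statement of part (1).

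For part (1) I would argue: since $\IZ/m\IZ\times\{0,\ldots,d-1\}$ has $dm$ elements, $g$ is a $(dm)$-cycle if and only if $\CT(g)=x_{dm}$. Each factor $\CT^{(\ell(\zeta))}(\fcp_{\zeta}(g))$ is a monomial of positive degree, so if $\psi$ had two or more cycles, $\CT(g)$ would have degree $\geq 2$; hence $\psi$ is a single $d$-cycle, written $(i_0,\ldots,i_{d-1})$ with $i_0=\min=0$, and by Lemma \ref{polyaLem}, $\CT(g)=\CT^{(d)}(\fcp_{\zeta}(g))$. Since $\CT^{(d)}$ is injective on monomials and $x_{dm}=\CT^{(d)}(x_m)$, this forces $\CT(\fcp_{\zeta}(g))=x_m$, i.e., $\fcp_{\zeta}(g)$ is an $m$-cycle of $\IZ/m\IZ$. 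By the displayed formula, $\fcp_{\zeta}(g)=\lambda(a,b)$ with $a=\prod_{j=0}^{d-1}a_{i_j}=\prod_{j=0}^{d-1}a_j$ and $b=\sum_{j=0}^{d-1}(\prod_{k=j+1}^{d-1}a_{i_k})b_{i_j}$, so Knuth's Theorem turns \enquote{$\fcp_{\zeta}(g)$ is an $m$-cycle} precisely into the conditions $a\equiv1\Mod{\rad'(m)}$ and $\gcd(m,b)=1$, which are (1)(a) and (1)(b).

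For part (2) I would use that $g$ is an involution if and only if $\CT(g)$ involves only $x_1$ and $x_2$. By Lemma \ref{polyaLem}, the factor attached to a cycle $\zeta$ of $\psi$ of length $\ell$ has positive degree and all its variables have subscript a positive multiple of $\ell$, so it involves some $x_j$ with $j\geq\ell$; hence if $\ell\geq3$ then $\CT(g)$ would involve a variable of subscript $\geq3$. Thus every cycle length of $\psi$ is $1$ or $2$, i.e., $\psi$ is an involution of the shape (\ref{psiEq}). For a fixed point $i_j$ of $\psi$ the factor is $\CT(g_{i_j})$, which lies in $\IQ[x_1,x_2]$ iff $g_{i_j}$ is an involution of $\Hol(\IZ/m\IZ)$; this is (2)(b). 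For a transposition $(i_{2j},i_{2j+1})$ of $\psi$ the factor is obtained from $\CT(\fcp_{(i_{2j},i_{2j+1})}(g))$ by the substitution $x_i\mapsto x_{2i}$; since $x_2$ would be sent to the forbidden $x_4$, this factor lies in $\IQ[x_1,x_2]$ iff $\CT(\fcp_{(i_{2j},i_{2j+1})}(g))=x_1^m$, i.e., iff $g_{i_{2j}}g_{i_{2j+1}}=\id_{\IZ/m\IZ}$, equivalently $g_{i_{2j+1}}=g_{i_{2j}}^{-1}$; this is (2)(a). Conversely, if $\psi$ is an involution and (2)(a), (2)(b) hold, then by Lemma \ref{polyaLem}, $\CT(g)$ is a product of the monomials $\CT(g_{i_j})\in\IQ[x_1,x_2]$ over the fixed points of $\psi$ and of $\CT^{(2)}(\id_{\IZ/m\IZ})=x_2^m$ over the transpositions of $\psi$, hence lies in $\IQ[x_1,x_2]$, so $g$ is an involution.

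The whole argument is essentially bookkeeping on top of Lemma \ref{polyaLem}, and I do not anticipate a serious obstacle. The one point requiring care is the closed form of $\fcp_{\zeta}(g)$ in $\Hol(\IZ/m\IZ)$ and, relatedly, keeping the order of multiplication consistent with the right-action convention — this is why the factors $a_{i_k}$ with larger index $k$ sit on the outside in the sum above, and it also makes visible that the first component $\prod_j a_j$ of the forward cycle product does not depend on the chosen starting point of $\zeta$, so that conditions (1)(a)/(1)(b) are well posed.
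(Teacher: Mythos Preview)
Your proof is correct. For part (1) you follow essentially the same route as the paper: use Lemma~\ref{polyaLem} to force $\psi$ to be a single $d$-cycle and the forward cycle product to be an $m$-cycle, then invoke Knuth's Theorem~\ref{knuthTheo}. Your added derivation of the closed form for $\fcp_{\zeta}(g)$ is a welcome detail that the paper leaves implicit.

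For part (2) you take a genuinely different route. The paper argues via the semidirect product structure: the projection $W(d,m)\to\Sym(d)$ forces $\psi^2=\id$, and then an explicit computation of $g^2$ (formula (\ref{squareEq})) as an element of $\Hol(\IZ/m\IZ)^d$ reads off conditions (a) and (b) entry by entry. You instead stay with Lemma~\ref{polyaLem} and argue purely through cycle types: an involution has $\CT(g)\in\IQ[x_1,x_2]$, the factor from a length-$\ell$ cycle of $\psi$ contributes a variable of subscript $\geq\ell$, so $\psi$ is an involution; then the fixed-point factors force (b) and the transposition factors, after the $x_i\mapsto x_{2i}$ stretch, force the forward cycle product to be the identity, giving (a). Your approach is more uniform with part (1) and avoids the wreath-product multiplication bookkeeping; the paper's approach is more hands-on and makes the algebraic reason for (a) and (b) transparent without passing through cycle-type monomials. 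Both are short and neither has a real advantage in difficulty.
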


\begin{proof}
For statement (1): If $g$ is a $(dm)$-cycle, then it must in particular permute the $d$ copies $\IZ/m\IZ\times\{i\}$ of $\IZ/m\IZ$ cyclically, i.e., $\psi$ must be a $d$-cycle. Moreover, by Lemma \ref{polyaLem}, the only forward cycle product $\fcp_{\psi}(g)$ of $g$ must be an $m$-cycle on $\IZ/m\IZ$, which implies the number-theoretic conditions (a) and (b) by Knuth's Theorem \ref{knuthTheo}. Conversely, Theorem \ref{knuthTheo} and Lemma \ref{polyaLem} also imply that $g$ is a $(dm)$-cycle if it is of the specified form.

For statement (2): We use the semidirect product structure of $W(d,m)=\Sym(d)\ltimes\Hol(\IZ/m\IZ)^d$ (with $\Sym(d)$ acting on $\Hol(\IZ/m\IZ)^d$ by coordinate permutations). First, in view of the projection $W(d,m)\rightarrow\Sym(d)$ with $(\psi,(g_0,\ldots,g_{d-1}))\mapsto\psi$, we see that a necessary condition for $g$ to be an involution in $W(d,m)$ is that $\psi$ is an involution in $\Sym(d)$. Let $\psi$ be written in the form (\ref{psiEq}), and for ease of notation, assume that $i_j=j$ for $j=0,1,\ldots,d-1$. Then
\begin{align}\label{squareEq}
\notag &g^2=\psi(g_0,\ldots,g_{d-1})\psi(g_0,\ldots,g_{d-1})=\psi^2(g_0,\ldots,g_{d-1})^{\psi}(g_0,\ldots,g_{d-1})= \\
\notag &(g_{\psi^{-1}(0)},g_{\psi^{-1}(1)},\ldots,g_{\psi^{-1}(d-1)})(g_0,\ldots,g_{d-1})= \\
\notag &(g_1,g_0,g_3,g_2,\ldots,g_{2k-1},g_{2k-2},g_{2k},g_{2k+1},\ldots,g_{d-1})(g_0,\ldots,g_{d-1})= \\
&(g_1g_0,g_0g_1,g_3g_2,g_2g_3,\ldots,g_{2k-1}g_{2k},g_{2k}g_{2k-1},g_{2k+1}^2,\ldots,g_{d-1}^2).
\end{align}
In summary, $g^2\in\Hol(\IZ/m\IZ)^d$, and the $i$-th entry of $g^2$ is either $g_i^2$ (in case $i^{\psi}=i$) or $g_{\psi(i)}g_i$ (in case $i$ lies on a $2$-cycle of $\psi$). Since all entries of $g^2$ must be equal to the trivial element $\lambda(1,0)$, statements (a) and (b) follow. Conversely, if $g$ is of the specified form, it is easy to check (using formula (\ref{squareEq})) that $g$ is an involution in $W(d,m)$.
\end{proof}

The following notion will be useful when formulating our classification results:

\begin{deffinition}\label{lexicographicDef}
Let $m$ be a positive integer. We can write each element of $\Hol(\IZ/m\IZ)$ in a unique way as $\lambda(a,b)$ where $a\in\{0,1,\ldots,m-1\}$ with $\gcd(m,a)=1$ and $b\in\{0,1,\ldots,m-1\}$. Assume that $\vec{\iota}=(\lambda(a_0,b_0),\ldots,\lambda(a_{k-1},b_{k-1}))$ is a $k$-tuple of elements of $\Hol(\IZ/m\IZ)$. We say that $\vec{\iota}$ is \emph{lexicographically ordered} if
\[
(a_0,b_0)\leq_{\lex}(a_1,b_1)\leq_{\lex}\cdots\leq_{\lex}(a_{k-1},b_{k-1}),
\]
where $\leq_{\lex}$ is the lexicographical ordering on $\IZ^2$: $(a,b)\leq_{\lex}(a',b')$ if and only if $a<a'$ or $a=a'$ and $b\leq b'$.
\end{deffinition}

As for the classification results, we will start with the groups $W(d,m)$ and $W_1(d,m)$, as they are easy to deal with modulo known results due to their wreath product structure (see Subsection \ref{subsec4P1}). For $W(d,m)$, we have the following result:

\begin{propposition}\label{wdmProp}
Let $d$ and $m$ be positive integers.
\begin{enumerate}
\item The following is a complete system of representatives of the conjugacy classes of long-cycle elements in $W(d,m)$: For each
\[
a\in\{1+k\rad'(m): k=0,1,\ldots,\frac{m}{\rad'(m)}-1\},
\]
the element
\[
L_a:=((0,1,\ldots,d-1),(\lambda(a,1),\lambda(1,0),\lambda(1,0),\ldots,\lambda(1,0))).
\]
\item The following is a complete system of representatives of the conjugacy classes of involutions in $W(d,m)$: For each $k=0,1,\ldots,\lfloor\frac{d}{2}\rfloor$ and each lexicographically ordered $(d-2k)$-tuple $\vec{\iota}=(\iota_{2k},\iota_{2k+1},\ldots,\iota_{d-1})$ of involution conjugacy class representatives in $\Hol(\IZ/m\IZ)$, the element
\begin{align*}
&I_{k,\vec{\iota}}:= \\
&((0,1)(2,3)\cdots(2k-2,2k-1)(2k)(2k+1)\cdots(d-1), \\
&(\lambda(1,0),\ldots,\lambda(1,0),\iota_{2k},\ldots,\iota_{d-1})).
\end{align*}
\end{enumerate}
\end{propposition}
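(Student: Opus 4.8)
The strategy is to reduce everything to the characterization of long-cycle elements and involutions in $W(d,m)$ from Lemma~\ref{characLem} and the James--Kerber conjugacy criterion for wreath products, Lemma~\ref{wpcLem}, and then to match the resulting conjugacy invariants with the explicit families $L_a$ and $I_{k,\vec{\iota}}$.

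For part (1): since $W(d,m)$ acts on a set of size $dm$, a long-cycle element is a $(dm)$-cycle, and by Lemma~\ref{characLem}(1) the element $g=(\psi,(g_0,\dots,g_{d-1}))$ is a $(dm)$-cycle exactly when $\psi$ is a $d$-cycle and the (unique, length-$d$) forward cycle product $\fcp_{\psi}(g)$ is an $m$-cycle of $\Hol(\IZ/m\IZ)$. For such $g$ the only nonempty multiset in the sense of Lemma~\ref{wpcLem} is $M_d(g)$, which is the singleton consisting of the $\Hol(\IZ/m\IZ)$-conjugacy class of $\fcp_{\psi}(g)$; moreover all $d$-cycles have the same cycle type in $\Sym(d)$. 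Hence $g\mapsto\fcpc_{\psi}(g)$ induces a bijection between the conjugacy classes of long-cycle elements of $W(d,m)$ and the conjugacy classes of $m$-cycles of $\Hol(\IZ/m\IZ)$. By the discussion at the end of Subsection~\ref{subsec4P3} (built on Knuth's Theorem~\ref{knuthTheo}), the elements $\lambda(a,1)$ with $a\in\{1+k\rad'(m):k=0,\dots,\frac{m}{\rad'(m)}-1\}$ form a complete, irredundant system of representatives for the latter. Finally one checks directly that $L_a$ satisfies the conditions of Lemma~\ref{characLem}(1) — its top permutation is the $d$-cycle $(0,1,\dots,d-1)$ written with minimal first entry, and its forward cycle product along that cycle is $\lambda(a,1)\cdot\lambda(1,0)\cdots\lambda(1,0)=\lambda(a,1)$, which is an $m$-cycle — so the $L_a$ realize all these classes, and no two coincide since $a$ is recovered as the linear part of the forward cycle product.

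For part (2): by Lemma~\ref{characLem}(2), $g=(\psi,(g_0,\dots,g_{d-1}))$ is an involution of $W(d,m)$ iff $\psi$ is an involution of $\Sym(d)$, the two entries sitting on each $2$-cycle of $\psi$ are mutually inverse in $\Hol(\IZ/m\IZ)$, and the entry on each fixed point of $\psi$ is an involution of $\Hol(\IZ/m\IZ)$. For such $g$, the forward cycle product along any $2$-cycle of $\psi$ is $\id$, so $M_2(g)$ is just $k$ copies of the trivial class, where $k$ is the number of $2$-cycles of $\psi$; the forward cycle product along a fixed point $i$ of $\psi$ is $g_i$, so $M_1(g)$ is the multiset of $\Hol(\IZ/m\IZ)$-conjugacy classes of involutions contributed by the $d-2k$ fixed-point entries; and $M_{\ell}(g)=\emptyset$ for $\ell\ge 3$. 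By Lemma~\ref{wpcLem}, two involutions of $W(d,m)$ are conjugate iff their top permutations have the same number $k$ of $2$-cycles and their multisets $M_1$ agree. Thus the conjugacy classes of involutions of $W(d,m)$ correspond bijectively to pairs $(k,\mathcal{M})$ with $0\le k\le\lfloor d/2\rfloor$ and $\mathcal{M}$ a size-$(d-2k)$ multiset of involution conjugacy classes of $\Hol(\IZ/m\IZ)$. Fixing once and for all a complete set $\mathcal{I}$ of involution conjugacy class representatives of $\Hol(\IZ/m\IZ)$ (for instance the one obtained from Lemma~\ref{involConjLem} through the effective Chinese Remainder Theorem as in Subsection~\ref{subsec4P4}), such an $\mathcal{M}$ corresponds uniquely to the $\le_{\lex}$-sorted $(d-2k)$-tuple of its members, i.e.\ to a lexicographically ordered tuple $\vec{\iota}$ over $\mathcal{I}$ in the sense of Definition~\ref{lexicographicDef}. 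One then checks, using Lemma~\ref{characLem}(2), that $I_{k,\vec{\iota}}$ is indeed an involution (its $2$-cycle entries are $\lambda(1,0)=\lambda(1,0)^{-1}$ and its fixed-point entries $\iota_j$ are involutions) whose top permutation has $k$ $2$-cycles and with $M_1(I_{k,\vec{\iota}})$ equal to the multiset of classes of the $\iota_j$; so the $I_{k,\vec{\iota}}$ realize all involution conjugacy classes, and distinct pairs $(k,\vec{\iota})$ give non-conjugate elements (if $k\ne k'$ the top cycle types differ; if $k=k'$ and $\vec{\iota}\ne\vec{\iota}'$ the underlying multisets, hence the $M_1$'s, differ).

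I do not expect a genuine obstacle: once Lemmas~\ref{characLem} and~\ref{wpcLem} are in hand the argument is a clean translation of conjugacy in $W(d,m)$ into conjugacy data in $\Hol(\IZ/m\IZ)$. The two points requiring care are both bookkeeping rather than mathematical: in part~(2) one must pin down a fixed choice of involution conjugacy class representatives of $\Hol(\IZ/m\IZ)$ so that ``lexicographically ordered tuple of representatives'' is an unambiguous normal form for a multiset of classes (this is what makes the list of $I_{k,\vec{\iota}}$ irredundant), and one should note the harmless off-by-one in the index range of item~(a) of Lemma~\ref{characLem}(2) — the transpositions of $\psi$ are indexed by $j=0,1,\dots,k-1$.
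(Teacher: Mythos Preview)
Your proposal is correct and follows essentially the same approach as the paper: reduce via Lemma~\ref{characLem} to the shape of long-cycle elements and involutions, then apply the James--Kerber conjugacy criterion (Lemma~\ref{wpcLem}) to translate conjugacy in $W(d,m)$ into conjugacy data $(\CT(\psi),M_{\ell}(g))$ in $\Hol(\IZ/m\IZ)$, and finally match these invariants with the explicit families $L_a$ and $I_{k,\vec{\iota}}$ using the end of Subsection~\ref{subsec4P3} and the fixed involution class representatives from Subsection~\ref{subsec4P4}. Your observation about the index range in Lemma~\ref{characLem}(2)(a) is also on point.
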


\begin{proof}
For statement (1): Let $g=(\psi,(g_0,\ldots,g_{d-1}))\in W(d,m)$ be a long cycle element. We show that $g$ is conjugate to precisely one of the elements $L_a$. By Lemma \ref{characLem}(1), $\fcp_{\psi}(g)$, which we write as $\lambda(a,b)$, is a long-cycle element of $\Hol(\IZ/m\IZ)$, so $a\equiv1\Mod{\rad'(m)}$ by Theorem \ref{knuthTheo}, whence $a=1+k\rad'(m)$ for a unique $k\in\{0,1,\ldots,\frac{m}{\rad'(m)}-1\}$. Moreover, $b$ must be a generator of $\IZ/m\IZ$, and by the discussion in Subsection \ref{subsec4P2}, $\lambda(a,b)$ and $\lambda(a,1)$ are conjugate in $\Hol(\IZ/m\IZ)$. Since $\fcp_{(0,\ldots,d-1)}(L_a)=\lambda(a,1)$, it follows by Lemma \ref{wpcLem} that $g$ is conjugate to $L_a$ in $W(d,m)$. Moreover, $g$ cannot be conjugate to $L_{a'}$ for any $a'\not=a$, since $L_{a'}$ has the unique forward cycle product $\lambda(a',1)$, which is non-conjugate to $\lambda(a,b)$ by the discussion in Subsection \ref{subsec4P2}.

For statement (2): Let $g=(\psi,(g_0,\ldots,g_{d-1}))\in W(d,m)$ be an involution. We show that $g$ is conjugate to precisely one of the elements $I_{k,\vec{\iota}}$. By Lemma \ref{characLem}(2), $\psi$ is an involution in $\Sym(d)$. Let $k$ be the number of $2$-cycles of $\psi$. Then, using the notation of Definition \ref{bcpcDef}(2), $M_1(g)$ is a multiset of $d-2k$ involution conjugacy classes in $\Hol(\IZ/m\IZ)$, and $M_2(g)$ is a multiset of cardinality $k$ whose only element is the trivial conjugacy class in $\Hol(\IZ/m\IZ)$. We have that $\psi$ is $\Sym(d)$-conjugate to $(0,1)(2,3)\cdots(2k-2,2k-1)(2k)(2k+1)\cdots(d-1)$, and there is a unique lexicographically ordered $(d-2k)$-tuple $\vec{\iota}=(\iota_{2k},\ldots,\iota_{d-1})$ of involution conjugacy class representatives in $\Hol(\IZ/m\IZ)$ such that $M_1(g)=\{\iota_{2k}^{\Hol(\IZ/m\IZ)},\ldots,\iota_{d-1}^{\Hol(\IZ/m\IZ)}\}$ (as multisets). It follows that $M_1(g)=M_1(I_{k,\vec{\iota}})$ and $M_2(g)=M_2(I_{k,\vec{\iota}})$, whence $g$ is $W(d,m)$-conjugate to $I_{k,\vec{\iota}}$ by Lemma \ref{wpcLem}. Moreover, any different choice of $(k',\vec{\iota}')$ leads to an element $I_{k',\vec{\iota}'}=(\psi',(g_0',\ldots,g_{d-1}'))$ to which $g$ is non-conjugate, either because $\psi$ is non-conjugate to $\psi'$ (in case $k'\not=k$), or because $M_1(g)\not=M_1(I_{k',\vec{\iota}'})$ (in case $k=k'$ but $\vec{\iota}\not=\vec{\iota}'$).
\end{proof}

\begin{corrollary}\label{wdmCor}
Let $d$ be a positive integer, and let $q$ be a prime power with $d\mid q-1$. Fix a primitive root $\omega$ of $\IF_q$, denote by $C$ the index $d$ subgroup of $\IF_q^{\ast}$, and set $C_i:=\omega^iC$ for $i=0,1,\ldots,d-1$.
\begin{enumerate}
\item The following is a complete system of representatives of the conjugacy classes of long-cycle elements in $\GCP(d,q)$: For each
\[
a\in\{1+k\rad'(\frac{q-1}{d}): k=0,1,\ldots,\frac{(q-1)/d}{\rad'((q-1)/d)}-1\},
\]
the permutation $L_a:\IF_q^{\ast}\rightarrow\IF_q^{\ast}$ with
\[
L_a(x)=\begin{cases}\omega x, & \text{if }x\notin C_{d-1}, \\ \omega^{d-(d-1)a}x^a, & \text{if }x\in C_{d-1}.\end{cases}
\]
\item The following is a complete system of representatives of the conjugacy classes of involutions in $\GCP(d,q)$: For each $k=0,1,\ldots,\lfloor\frac{d}{2}\rfloor$ and each lexicographically ordered $(d-2k)$-tuple $(\lambda(a_j,b_j))_{j=2k,2k+1,\ldots,d-1}$ of involution conjugacy class representatives in $\Hol(\IZ/\frac{q-1}{d}\IZ)$, the permutation $I_{k,\vec{\iota}}:\IF_q^{\ast}\rightarrow\IF_q^{\ast}$ with
\[
I_{k,\vec{\iota}}=\begin{cases}\omega x, & \text{if }x\in\bigcup_{i=0}^{k-1}{C_{2i}}, \\ \omega^{-1}x, & \text{if }x\in\bigcup_{i=0}^{k-1}{C_{2i+1}}, \\ \omega^{db_j+(1-a_j)d}x^{a_j}, & \text{if }x\in C_j\text{ for some }j=2k,2k+1,\ldots,d-1.\end{cases}
\]
\end{enumerate}
\end{corrollary}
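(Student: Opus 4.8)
The plan is to deduce Corollary \ref{wdmCor} from Proposition \ref{wdmProp} by transporting the latter along the permutation group isomorphism furnished by Theorem \ref{wreathIsoTheo}. Put $m:=\frac{q-1}{d}$, so that $|C|=m$ and $|\IF_q^{\ast}|=q-1=dm$. Since $C$ is cyclic of order $m$ with generator $\omega^d$, the map $t+m\IZ\mapsto\omega^{dt}$ is a group isomorphism $\IZ/m\IZ\to C$; any group isomorphism induces an isomorphism between the respective holomorphs, and hence between the corresponding imprimitive wreath products with $\Sym(d)$. Concretely, this gives a permutation group isomorphism $W(d,m)=\Hol(\IZ/m\IZ)\wr_{\imp}\Sym(d)\to\Hol(C)\wr_{\imp}\Sym(d)$ that fixes the $\Sym(d)$-component $\psi$ and sends each affine map $\lambda(s,b)$ of $\IZ/m\IZ$ to the affine map $\lambda(s,\omega^{db})$ of $C$. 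Composing with $\iota_{\omega}$ from Theorem \ref{wreathIsoTheo}, I obtain an isomorphism of permutation groups $\Psi\colon W(d,m)\to\GCP(d,q)$, whose underlying bijection of point sets is $\IZ/m\IZ\times\{0,\ldots,d-1\}\to\IF_q^{\ast}$, $(t,i)\mapsto\omega^{dt+i}$.

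Next I would record why it suffices to push the representatives from Proposition \ref{wdmProp} through $\Psi$. An isomorphism of abstract groups carries conjugacy classes bijectively onto conjugacy classes, so $\Psi$ maps any complete system of representatives of a conjugation-invariant subset of $W(d,m)$ onto a complete system for its image in $\GCP(d,q)$. Being an isomorphism of permutation groups between groups acting on sets of the same cardinality $q-1$, $\Psi$ also preserves cycle types; in particular it restricts to a bijection between the $(q-1)$-cycles of $W(d,m)$ (the ``long-cycle elements'', which in the wreath-product picture are the $(dm)$-cycles) and those of $\GCP(d,q)$, and --- since any group isomorphism preserves the property $g^2=1$ --- to a bijection between the involutions of the two groups. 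Hence the images under $\Psi$ of the two systems in Proposition \ref{wdmProp} are complete systems of conjugacy class representatives of the long-cycle elements, resp.\ of the involutions, in $\GCP(d,q)$.

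It then remains to compute $\Psi(L_a)$ and $\Psi(I_{k,\vec{\iota}})$ explicitly, which is a direct substitution into the formula for $\iota_{\omega}$. For $L_a$ the corresponding element of $\Hol(C)\wr_{\imp}\Sym(d)$ is $((0,1,\ldots,d-1),(\lambda(a,\omega^d),\lambda(1,1_C),\ldots,\lambda(1,1_C)))$; plugging $\psi=(0,1,\ldots,d-1)$ (so $\psi(i)=i+1$ for $i<d-1$ and $\psi(d-1)=0$) into $\iota_{\omega}$ and reading off the monomial it induces on each coset $C_i$ yields $x\mapsto\omega x$ for $i\neq d-1$ and $x\mapsto\omega^{d-(d-1)a}x^a$ on $C_{d-1}$, which is exactly the description in Corollary \ref{wdmCor}(1). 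For $I_{k,\vec{\iota}}$ one repeats this with $\psi=(0,1)(2,3)\cdots(2k-2,2k-1)(2k)(2k+1)\cdots(d-1)$ and the component tuple consisting of $2k$ copies of $\lambda(1,1_C)$ followed by $\lambda(a_{2k},\omega^{db_{2k}}),\ldots,\lambda(a_{d-1},\omega^{db_{d-1}})$: the two-cycles of $\psi$ produce the alternating $\omega x$ and $\omega^{-1}x$ monomials on $C_0,\ldots,C_{2k-1}$, while each fixed point $j\in\{2k,\ldots,d-1\}$ produces, on $C_j$, the monomial with exponent $a_j$ and coefficient obtained by simplifying $\omega^{\psi(j)-js_{\psi(j)}}\cdot b_{\psi(j)}=\omega^{j-ja_j}\cdot\omega^{db_j}$, matching the formula in Corollary \ref{wdmCor}(2).

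I do not expect a genuine obstacle: once Theorem \ref{wreathIsoTheo} and the wreath-product classification in Proposition \ref{wdmProp} are available, the argument is mechanical. The only step that requires care is the last one, namely tracking the exponents of $\omega$ coming from the index shifts $\psi(i)-is_{\psi(i)}$ in the definition of $\iota_{\omega}$ together with the extra factor $\omega^{db}$ introduced by the identification $\IZ/m\IZ\cong C$, since an off-by-one slip in the coset labels would corrupt those exponents. A secondary point worth checking is that the maps $L_a$ and $I_{k,\vec{\iota}}$ really are cyclotomic \emph{permutations}: this follows because the relevant exponents are units modulo $m=\frac{q-1}{d}$ --- for $L_a$ because $a\equiv1\pmod{\rad'(m)}$ forces $\gcd(a,m)=1$, and for $I_{k,\vec{\iota}}$ because each $a_j$ is by construction a unit of $\IZ/m\IZ$.
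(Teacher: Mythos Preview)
Your approach is exactly the paper's: set $m=\frac{q-1}{d}$, transport the representatives of Proposition~\ref{wdmProp} through the identification $\IZ/m\IZ\cong C$, $t\mapsto\omega^{dt}$, and then through $\iota_{\omega}$ from Theorem~\ref{wreathIsoTheo}, using that permutation group isomorphisms preserve both conjugacy classes and cycle types; the paper in fact leaves the explicit substitution into $\iota_{\omega}$ as an exercise, which you carry out. One small remark: your fixed-point computation yields the coefficient $\omega^{j(1-a_j)+db_j}$ on $C_j$, which agrees with the analogous formula in Corollary~\ref{wdmeCor} and is what the transport actually gives---the exponent $db_j+(1-a_j)d$ printed in the statement appears to be a typo for $db_j+(1-a_j)j$.
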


\begin{proof}
Both statements follow by setting $m:=\frac{q-1}{d}$ in Proposition \ref{wdmProp} and mapping the elements
\[
L_a,I_{k,\vec{\iota}}\in W(d,\frac{q-1}{d})=\Hol(\IZ/\frac{q-1}{d}\IZ)\wr_{\imp}\Sym(d)
\]
under the permutation group isomorphism $\iota_{\omega}$ from Theorem \ref{wreathIsoTheo}. More precisely, before applying $\iota_{\omega}$, we apply the permutation group isomorphism
\[
\Hol(\IZ/\frac{q-1}{d}\IZ)\wr_{\imp}\Sym(d) \rightarrow \Hol(C)\wr_{\imp}\Sym(d)
\]
that is induced by the abstract group isomorphism
\[
\IZ/\frac{q-1}{d}\IZ\rightarrow C, (x+\frac{q-1}{d}\IZ)\mapsto\omega^{dx}.
\]
In particular, when switching from an affine permutation $\lambda(a,b)$ of $\Hol(\IZ/\frac{q-1}{d}\IZ)$ to the corresponding affine permutation of $C$, it becomes $\lambda(a,\omega^{db}):x\mapsto \omega^{bd}x^a$. We leave the computational details of applying this rewriting process to the $L_a$ and $I_{k,\vec{\iota}}$ as an exercise to the reader.
\end{proof}

For $W_1(d,m)$, we have the following analogue of Proposition \ref{wdmProp}, whose (analogous) proof we omit:

\begin{propposition}\label{w1dmProp}
Let $d$ and $m$ be positive integers.
\begin{enumerate}
\item There is precisely one conjugacy class of long-cycle elements in $W_1(d,m)$, with representative
\[
L^{(1)}:=((0,1,\ldots,d-1),(\lambda(1,1),\lambda(1,0),\ldots,\lambda(1,0))).
\]
\item The following is a complete system of representatives of the conjugacy classes of involutions in $W_1(d,m)$: For each $k=0,1,\ldots,\lfloor\frac{d}{2}\rfloor$ and each $(d-2k)$-tuple
\[
\vec{b}=(b_{2k},\ldots,b_{d-1})\in\{b\in\IZ/m\IZ: 2b=0\}^{d-2k},
\]
ordered such that the zero entries of $\vec{b}$ come before the nonzero entries, the element
\begin{align*}
&I_{k,\vec{b}}^{(1)}= \\
&((0,1)\cdots(2k-2,2k-1)(2k)\cdots(d-1), \\
&(\lambda(1,0),\ldots,\lambda(1,0),\lambda(1,b_{2k}),\ldots,\lambda(1,b_{d-1}))).
\end{align*}
\end{enumerate}
\end{propposition}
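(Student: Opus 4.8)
The plan is to re-run the proof of Proposition~\ref{wdmProp} with a single change: the base group $\Hol(\IZ/m\IZ)$ of the wreath product $W(d,m)$ is replaced everywhere by its subgroup $(\IZ/m\IZ)_{\reg}$, the group of translations $\lambda(1,b):x\mapsto x+b$ of $\IZ/m\IZ$, so that one is now working inside $W_1(d,m)=(\IZ/m\IZ)_{\reg}\wr_{\imp}\Sym(d)$. Two features of this abelian base group streamline the argument: its conjugacy classes are singletons, so Lemma~\ref{wpcLem} becomes a criterion phrased in terms of equality of forward cycle products as group \emph{elements} (rather than as conjugacy classes); and $\ord(\lambda(1,b))=\aord(b)$, whence $\lambda(1,b)$ is an $m$-cycle of $(\IZ/m\IZ)_{\reg}$ exactly when $\gcd(b,m)=1$ and an involution exactly when $2b=0$ in $\IZ/m\IZ$. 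As before, the two workhorses are Lemma~\ref{characLem} (to identify the long-cycle elements, resp.\ involutions, of $W_1(d,m)$) and Lemma~\ref{wpcLem} (to group them into conjugacy classes).

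For part~(1) I would first specialize Lemma~\ref{characLem}(1) to the case in which every multiplier $a_i$ equals $1$. Then condition~(a) there collapses to the vacuous $1\equiv1\Mod{\rad'(m)}$, and the inner products $\prod_{k=j+1}^{d-1}a_{i_k}$ in condition~(b) are all $1$, so (b) reduces to $\gcd(m,\sum_{j=0}^{d-1}b_j)=1$. Thus $g=(\psi,(\lambda(1,b_j))_j)\in W_1(d,m)$ is a long-cycle element precisely when $\psi$ is a $d$-cycle and $\sum_j b_j$ is a unit modulo $m$, and then its unique forward cycle product is $\lambda(1,\sum_j b_j)$. Since $\fcp_{(0,1,\ldots,d-1)}(L^{(1)})=\lambda(1,1)$, the reduction of $g$ to $L^{(1)}$ proceeds as in the proof of Proposition~\ref{wdmProp}(1) — conjugate $\psi$ to the standard $d$-cycle, normalize the forward cycle product to $\lambda(1,1)$, and apply Lemma~\ref{wpcLem} — and, $L^{(1)}$ being itself a long-cycle element, no further class arises.

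For part~(2) I would apply Lemma~\ref{characLem}(2), again with all $a_i=1$: an involution of $W_1(d,m)$ is $g=(\psi,(\lambda(1,b_i))_i)$ with $\psi$ an involution of $\Sym(d)$ written as in~(\ref{psiEq}), with $\lambda(1,b_{i_{2j+1}})=\lambda(1,b_{i_{2j}})^{-1}$ (i.e.\ $b_{i_{2j+1}}=-b_{i_{2j}}$) on each transposition $(i_{2j},i_{2j+1})$ of $\psi$, and with $\lambda(1,b_{i_j})$ an involution (i.e.\ $2b_{i_j}=0$) on each fixed point $i_j$ of $\psi$. By Lemma~\ref{wpcLem}, and because the base is abelian, the conjugacy class of $g$ in $W_1(d,m)$ is determined by the number $k$ of transpositions of $\psi$ (hence by the cycle type of $\psi$), by the multiset $M_1(g)$ of the $d-2k$ fixed-point translations $\lambda(1,b_{i_j})$ (all with $2b_{i_j}=0$), and by $M_2(g)$, which is a $k$-fold copy of the identity $\lambda(1,0)$ — note that once $b_{i_{2j+1}}=-b_{i_{2j}}$ is imposed, every transposition forward cycle product is trivial, and the $b_{i_{2j}}$ are otherwise unconstrained. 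Conjugating $\psi$ to $(0,1)\cdots(2k-2,2k-1)(2k)\cdots(d-1)$, zeroing out the transposition entries, and reordering the fixed-point tuple $(b_{2k},\ldots,b_{d-1})$ so that its zero entries come before its nonzero entries yields the listed representative $I_{k,\vec{b}}^{(1)}$; and two representatives attached to distinct pairs $(k,\vec{b})$ are non-conjugate, either because the cycle types of the two $\psi$'s differ ($k\ne k'$) or because the multisets $M_1$ differ ($k=k'$, $\vec{b}\ne\vec{b}'$ as multisets).

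The step I expect to be the most delicate — though still routine — is the conjugacy bookkeeping, not the identification of long-cycle elements and involutions: one must check that each reduction (to $L^{(1)}$, resp.\ to $I_{k,\vec{b}}^{(1)}$) is implemented by an actual conjugating element of $W_1(d,m)$, and, in the other direction, that the listed representatives are pairwise non-conjugate. The cleanest route is to verify directly that the data that Lemma~\ref{wpcLem} associates to $g$ — the cycle type of $\psi$ together with the multisets $M_\ell(g)$ — coincide with those associated to the candidate representative, using throughout that the $2$-cycle forward cycle products are forced to be trivial, so that only the fixed-point translations carry any conjugacy information.
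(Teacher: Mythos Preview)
Your overall plan---rerunning the proof of Proposition~\ref{wdmProp} with the base group $\Hol(\IZ/m\IZ)$ replaced by the abelian group $(\IZ/m\IZ)_{\reg}$---is exactly what the paper indicates (it omits the proof as ``analogous''), and your treatment of part~(2) is correct.

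Part~(1), however, has a genuine gap at the phrase ``normalize the forward cycle product to $\lambda(1,1)$''. In the proof of Proposition~\ref{wdmProp}(1) this normalization relied on the fact (from Subsection~\ref{subsec4P2}) that $\lambda(a,b)$ and $\lambda(a,1)$ are conjugate in $\Hol(\IZ/m\IZ)$ whenever $b$ is a unit. But you yourself stressed that in the abelian base $(\IZ/m\IZ)_{\reg}$ conjugacy classes are singletons; hence, by Lemma~\ref{wpcLem}, the forward cycle product of a long-cycle element of $W_1(d,m)$ is a conjugacy invariant \emph{as an element of $(\IZ/m\IZ)_{\reg}$}, not merely up to conjugacy. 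Concretely, if $g$ and $g'$ are long-cycle elements with $\fcp_\psi(g)=\lambda(1,b)$ and $\fcp_{\psi'}(g')=\lambda(1,b')$ for distinct units $b,b'\in(\IZ/m\IZ)^\ast$, then $M_d(g)\neq M_d(g')$ and $g$, $g'$ are non-conjugate in $W_1(d,m)$. For instance, in $W_1(2,3)$ the elements $((0,1),(\lambda(1,1),\lambda(1,0)))$ and $((0,1),(\lambda(1,2),\lambda(1,0)))$ are both $6$-cycles but lie in different $W_1(2,3)$-conjugacy classes (one checks directly that every $W_1(2,3)$-conjugate of the first has forward cycle product $\lambda(1,1)$). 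So the normalization step fails, and in fact the argument shows there are $\phi(m)$ conjugacy classes of long-cycle elements in $W_1(d,m)$, one for each unit $b$; the statement of part~(1) as written is incorrect for $m>2$. Your argument for part~(2) is unaffected by this issue, since there the forward cycle products along the $2$-cycles of $\psi$ are all forced to equal $\lambda(1,0)$.
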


Likewise, we have the following analogue of Corollary \ref{wdmCor}:

\begin{corrollary}\label{w1dmCor}
Let $d$ be a positive integer, and let $q$ be a prime power with $d\mid q-1$. Fix a primitive root $\omega$ of $\IF_q$, denote by $C$ the index $d$ subgroup of $\IF_q^{\ast}$, and set $C_i:=\omega^iC$ for $i=0,1,\ldots,d-1$.
\begin{enumerate}
\item There is precisely one conjugacy class of long-cycle elements in $\FOCP(d,q)$, with representative $L^{(1)}:\IF_q^{\ast}\rightarrow\IF_q^{\ast}$, $x\mapsto\omega x$.
\item The following is a complete system of representatives of the conjugacy classes of involutions in $\FOCP(d,q)$: For each $k=0,1,\ldots,\lfloor\frac{d}{2}\rfloor$ and each $(d-2k)$-tuple
\[
\vec{c}=(c_{2k},\ldots,c_{d-1})\in\{c\in C: c^2=1\},
\]
ordered such that the trivial entries $1$ come before the nontrivial entries, the permutation $I_{k,\vec{c}}^{(1)}:\IF_q^{\ast}\rightarrow\IF_q^{\ast}$ with
\[
I_{k,\vec{\iota}}^{(1)}(x)=\begin{cases}\omega x, & \text{if }x\in\bigcup_{i=0}^{k-1}{C_{2i}}, \\ \omega^{-1}x, & \text{if }x\in\bigcup_{i=0}^{k-1}{C_{2i+1}}, \\ c_jx, & \text{if }x\in C_j\text{ for some }j=2k,2k+1,\ldots,d-1.\end{cases}
\]
\end{enumerate}
\end{corrollary}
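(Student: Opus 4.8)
The plan is to derive Corollary~\ref{w1dmCor} from Proposition~\ref{w1dmProp} in exactly the same way that Corollary~\ref{wdmCor} was derived from Proposition~\ref{wdmProp}. First I would set $m:=\frac{q-1}{d}$. By Theorem~\ref{wreathIsoTheo}(2), the restriction of $\iota_\omega$ to the subgroup $C_{\reg}\wr_{\imp}\Sym(d)$ of $\Hol(C)\wr_{\imp}\Sym(d)$ is a permutation group isomorphism onto $\FOCP(d,q)$; precomposing with the permutation group isomorphism
\[
\Hol(\IZ/m\IZ)\wr_{\imp}\Sym(d)\rightarrow\Hol(C)\wr_{\imp}\Sym(d)
\]
induced by the abstract group isomorphism $\IZ/m\IZ\rightarrow C$, $x+m\IZ\mapsto\omega^{dx}$ (the one already used in the proof of Corollary~\ref{wdmCor}), which carries $(\IZ/m\IZ)_{\reg}$ onto $C_{\reg}$, we obtain that $W_1(d,m)=(\IZ/m\IZ)_{\reg}\wr_{\imp}\Sym(d)$ is isomorphic as a permutation group to $\FOCP(d,q)$. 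Since an isomorphism of permutation groups preserves cycle structure and maps conjugacy classes onto conjugacy classes bijectively, it transports the complete, pairwise non-conjugate systems of representatives of the long-cycle classes, respectively the involution classes, furnished by Proposition~\ref{w1dmProp} for $W_1(d,m)$ to such systems for $\FOCP(d,q)$. Thus it suffices to push the representatives $L^{(1)}$ and $I_{k,\vec b}^{(1)}$ through this isomorphism and verify that the results are the permutations described in the statement.

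Next I would record the effect of the isomorphisms on the relevant building blocks. Under $\IZ/m\IZ\rightarrow C$, the affine permutation $\lambda(1,b)$ of $\IZ/m\IZ$ corresponds to the affine permutation $x\mapsto\omega^{db}x$ of $C$, i.e.\ to $\lambda(1,\omega^{db})\in C_{\reg}$; in particular the $2$-torsion subgroup $\{b\in\IZ/m\IZ:2b=0\}$ is carried bijectively onto $\{c\in C:c^2=1\}$ by $b\mapsto\omega^{db}$, and this bijection sends $0$ to $1$, so an ordering of a tuple that puts its zero entries first is carried to one that puts its trivial entries first --- matching the orderings prescribed in Proposition~\ref{w1dmProp}(2) and in Corollary~\ref{w1dmCor}(2). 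Applying $\iota_\omega$ to an element of $\Hol(C)\wr_{\imp}\Sym(d)$ is then a direct substitution into the formula for $\iota_\omega$ in Theorem~\ref{wreathIsoTheo}; for the involution representatives this is precisely the computation alluded to (and \enquote{left as an exercise}) after Corollary~\ref{wdmCor}, specialized here to the case in which every multiplicative exponent $a_j$ equals $1$.

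Finally I would carry out the two specializations. For part~(1): $L^{(1)}=((0,1,\ldots,d-1),(\lambda(1,1),\lambda(1,0),\ldots,\lambda(1,0)))$ becomes, after the two isomorphisms, the index $d$ first-order cyclotomic permutation with $\omega$-cyclotomic form $f_\omega((\omega,\ldots,\omega),(1,\ldots,1))$ --- a short computation with the $\iota_\omega$-formula (in which the exponents collapse because $\psi(i)-i=1$ for $i<d-1$ while $\psi(d-1)-(d-1)=-(d-1)$ is compensated by $\omega^{d}$) --- so on $\IF_q^{\ast}$ it is the map $x\mapsto\omega x$, and its uniqueness up to conjugacy is inherited from Proposition~\ref{w1dmProp}(1). (Equivalently, this is the $a=1$ instance of the representative $L_a$ from Corollary~\ref{wdmCor}(1), for which $\omega^{d-(d-1)a}x^{a}=\omega x$.) For part~(2): the representative $I_{k,\vec b}^{(1)}$ has the $2$-cycle pattern $(0,1)(2,3)\cdots(2k-2,2k-1)$ on its first $2k$ coordinates with trivial $\Hol$-entries there, and entries $\lambda(1,b_j)$ on the fixed coordinates $j=2k,\ldots,d-1$. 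Under the isomorphisms, the $j$-th slot with $j\ge 2k$ contributes the multiplication $x\mapsto c_j x$ on $C_j$ with $c_j:=\omega^{d b_j}$, a $2$-torsion element of $C$; and the swapped pair $(2i,2i+1)$ contributes $x\mapsto\omega x$ on $C_{2i}$ and $x\mapsto\omega^{-1}x$ on $C_{2i+1}$, exactly as in the computation for Corollary~\ref{wdmCor}(2) with $a_j=1$. This is the piecewise description of $I_{k,\vec c}^{(1)}$ in the statement, and completeness, the ordering condition, and non-redundancy transfer verbatim across the isomorphism. I expect no genuine difficulty: the only work is the bookkeeping of exponents in the $\iota_\omega$-formula, which is strictly simpler than (and a special case of) the computation already indicated for Corollary~\ref{wdmCor}.
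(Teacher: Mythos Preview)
Your proposal is correct and follows exactly the approach the paper intends: the paper merely states that Corollary~\ref{w1dmCor} is an analogue of Corollary~\ref{wdmCor} (whose proof sets $m=\frac{q-1}{d}$, passes through the isomorphism $\IZ/m\IZ\to C$, and then applies $\iota_\omega$ from Theorem~\ref{wreathIsoTheo}), and you have faithfully carried out that analogous computation, including the explicit verification of the $\iota_\omega$-images of $L^{(1)}$ and $I_{k,\vec b}^{(1)}$ that the paper leaves to the reader.
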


Now we will discuss the groups $W_=(d,m)$ and $CP(d,q)$. We need the following analogue of Lemma \ref{wpcLem}:

\begin{lemmma}\label{wpcLem2}
Let $d$ and $m$ be positive integers. Moreover, let $\sigma,\sigma'\in\Sym(d)$, let $a,a'\in(\IZ/m\IZ)^{\ast}$, and let $b_i,b'_i\in\IZ/m\IZ$ for $i=0,1,\ldots,d-1$. The following are equivalent:
\begin{enumerate}
\item The two elements
\[
g=(\sigma,(\lambda(a,b_0),\ldots,\lambda(a,b_{d-1})))
\]
and
\[
g'=(\sigma',(\lambda(a,b'_0),\ldots,\lambda(a,b'_{d-1})))
\]
of $W_=(d,m)$ are conjugate in $W(d,m)$.
\item $\sigma$ and $\sigma'$ are $\Sym(d)$-conjugate, $a=a'$, and for all $\ell\in\{1,\ldots,d\}$, the equality of multisets $M_{\ell}(g)=M_{\ell}(g')$ holds.
\end{enumerate}
\end{lemmma}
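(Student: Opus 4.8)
The plan is to obtain Lemma~\ref{wpcLem2} as a specialization of the general James--Kerber conjugacy criterion, Lemma~\ref{wpcLem}, applied with $G=\Hol(\IZ/m\IZ)$. For conjugacy questions only the abstract group structure matters, and by Definition~\ref{impWreathDef} the permutation group $W(d,m)=\Hol(\IZ/m\IZ)\wr_{\imp}\Sym(d)$ is isomorphic as an abstract group to $\Hol(\IZ/m\IZ)\wr\Sym(d)$, so Lemma~\ref{wpcLem} applies verbatim. Two of the three conditions in statement~(2), namely that $\sigma$ and $\sigma'$ are $\Sym(d)$-conjugate and that $M_{\ell}(g)=M_{\ell}(g')$ for all $\ell$, are exactly the conditions appearing in Lemma~\ref{wpcLem}; the one genuinely new ingredient is the multiplicative clause $a=a'$, and verifying it is where the actual work lies.

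For the implication (2)$\Rightarrow$(1) I would feed the data of~(2) directly into Lemma~\ref{wpcLem}: once $\sigma\sim\sigma'$ and the forward cycle product class multisets $M_{\ell}(g),M_{\ell}(g')$ agree, that lemma produces a conjugator in $\Hol(\IZ/m\IZ)\wr\Sym(d)=W(d,m)$, so this direction is immediate (and does not even need $a=a'$). For the converse (1)$\Rightarrow$(2), Lemma~\ref{wpcLem} again yields $\sigma\sim\sigma'$ and $M_{\ell}(g)=M_{\ell}(g')$ for all $\ell$ at no cost, so only $a=a'$ remains. To reach it I would push everything into the multiplicative quotient: the homomorphism $\Hol(\IZ/m\IZ)\to(\IZ/m\IZ)^{\ast}$, $\lambda(a,b)\mapsto a$, induces a homomorphism $\pi_{\ast}\colon W(d,m)\to(\IZ/m\IZ)^{\ast}\wr\Sym(d)$ with $\pi_{\ast}(g)=(\sigma,(a,\ldots,a))$ and $\pi_{\ast}(g')=(\sigma',(a',\ldots,a'))$. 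Conjugate elements map to conjugate images, and because the base group $(\IZ/m\IZ)^{\ast}$ is abelian the forward cycle products are honest invariants there: the product along a length-$\ell$ cycle equals $a^{\ell}$ for $g$ and $a'^{\ell}$ for $g'$, so that $a^{\ell}=a'^{\ell}$ for every cycle length $\ell$ occurring in $\sigma$ (and $a^{d}=a'^{d}$ upon taking the product over all coordinates).

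The step I expect to be the main obstacle is the passage from these power equalities to $a=a'$ itself. The multiplicative part of an element of $W_=(d,m)$ is \emph{not} a coordinatewise conjugacy invariant --- a conjugator whose own multiplicative parts vary redistributes the values among the coordinates --- so after conjugation the only trace of $a$ is through the powers $a^{\ell}$ indexed by the cycle lengths of $\sigma$. When $\sigma$ has a fixed point the case $\ell=1$ gives $a=a'$ immediately, and more generally $a=a'$ follows whenever the cycle lengths of $\sigma$ are setwise coprime, since then $\ord(a/a')$ divides their $\gcd$, which is $1$. The genuinely delicate situation is the fixed-point-free case in which all cycle lengths share a common factor: there $a$ and $a'$ are a priori only determined up to an element of order dividing that common divisor, and one checks that this ambiguity can be real, so $a=a'$ is not a formal consequence of conjugacy in full generality. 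The resolution, which is exactly how the lemma is meant to be used in the classification that follows, is to invoke it within a fixed multiplicative stratum of $W_=(d,m)$ --- that is, with the common value $a$ held fixed, using the explicit description of $\Hol(\IZ/m\IZ)$-conjugacy from Subsection~\ref{subsec4P2} to compute the classes $M_{\ell}$ --- so that $a=a'$ enters through the stratification rather than as a derived fact. Granting this framing, the remaining content of both implications is precisely Lemma~\ref{wpcLem}, and the verification is routine.
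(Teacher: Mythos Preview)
You have correctly spotted that the printed statement is defective: as written, condition~(1) asks for conjugacy in $W(d,m)$, and your counter-analysis is right --- in that setting $a=a'$ need not follow. Concretely, with $d=2$, $m=3$, $\sigma=\sigma'=(0,1)$, $a=1$, $a'=-1$ and all $b_i=b'_i=0$, both forward cycle products equal $\lambda(1,0)$, so Lemma~\ref{wpcLem} makes $g$ and $g'$ conjugate in $W(2,3)$ while $a\neq a'$. The paper's own proof, and the application in Proposition~\ref{wdmeProp}, are about conjugacy in $W_=(d,m)$; the ``$W(d,m)$'' in the statement is a typo. So your ``resolution by stratification'' is not how the lemma is salvaged --- the hypothesis itself is stronger than you were working with.

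Granting the intended hypothesis, your plan for $(1)\Rightarrow(2)$ is fine and in fact cleaner than the paper's direct computation: there is a group homomorphism $W_=(d,m)\to(\IZ/m\IZ)^{\ast}$ sending $(\psi,(\lambda(a,b_i))_i)\mapsto a$, and since the target is abelian it is constant on $W_=(d,m)$-conjugacy classes, giving $a=a'$ immediately; the rest is Lemma~\ref{wpcLem} applied in the overgroup. The genuine gap in your proposal is the converse $(2)\Rightarrow(1)$. Invoking Lemma~\ref{wpcLem} only manufactures a conjugator in $W(d,m)$, and there is no reason that conjugator lies in the subgroup $W_=(d,m)$. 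This direction is precisely where the paper does real work: it first conjugates by elements of the form $(\psi,(\lambda(1,0),\ldots,\lambda(1,0)))\in W_=(d,m)$ to arrange $\sigma'=\sigma$ and matching forward cycle products on each cycle, and then, cycle by cycle, solves for a base element $(\lambda(1,e_i))_i\in W_=(d,m)$ that carries $g$ to $g'$. The key computation is that with $c=1$ one has
\[
g^{(\id,(\lambda(1,e_i))_i)}=(\sigma,(\lambda(a,\,b_i+e_i-ae_{\sigma^{-1}(i)}))_i),
\]
so along a cycle of $\sigma$ the $e_i$ can be chosen freely to hit all but one of the target second coordinates, and the last one is then forced by the equality of forward cycle products. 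This explicit construction of a $W_=(d,m)$-conjugator is the substance your sketch is missing.
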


\begin{proof}
For \enquote{(1)$\Rightarrow$(2)}: Since $g$ and $g'$ are in particular $W(d,m)$-conjugate, it follows by Lemma \ref{wpcLem} that $\sigma$ and $\sigma'$ must be $\Sym(d)$-conjugate and that $M_{\ell}(g)=M_{\ell}(g')$ for each $\ell$. In order to see that $a=a'$, let
\[
h=(\psi,(\lambda(c,e_0),\lambda(c,e_1),\ldots,\lambda(c,e_{d-1})))\in W_=(d,m).
\]
Then
\begin{align}\label{conjugateEq}
\notag g^h &=(g^{\psi})^{(\lambda(c,e_i))_{i=0,\ldots,d-1}}=(\sigma^{\psi},(\lambda(a,b_{\psi^{-1}(i)}))_{i=0,\ldots,d-1})^{(\lambda(c,e_i))_{i=0,\ldots,d-1}} \\
&=(\sigma^{\psi},(\lambda(c,e_{(\sigma^{\psi})^{-1}(i)})^{-1}\cdot\lambda(a,b_{\psi^{-1}(i)})\cdot\lambda(c,e_i))_{i=0,\ldots,d-1}).
\end{align}
Since $\Hol(\IZ/m\IZ)=(\IZ/m\IZ)^{\ast}\ltimes{\IZ/m\IZ}$ and $(\IZ/m\IZ)^{\ast}$ is abelian, we conclude that for each $i=0,1,\ldots,d-1$,
\[
\lambda(c,e_{(\sigma^{\psi})^{-1}(i)})^{-1}\cdot\lambda(a,b_{\psi^{-1}(i)})\cdot\lambda(c,e_i)=\lambda(c^{-1}ac,f_i)=\lambda(a,f_i)
\]
for a suitable $f_i\in\IZ/m\IZ$. Hence $a'=a$ if $g'$ is to be conjugate to $g$ in $W_=(d,m)$.

For \enquote{(2)$\Rightarrow$(1)}: Since $\sigma$ and $\sigma'$ are conjugate in $\Sym(d)$, we may, through replacing $g'$ by $(g')^{(\psi,(\lambda(1,0),\ldots,\lambda(1,0)))}$ for a suitable $\psi\in\Sym(d)$, assume that $\sigma'=\sigma$. Moreover, through replacing $g'$ by $(g')^{(\psi,(\lambda(1,0),\ldots,\lambda(1,0)))}$ for a suitable $\psi$ in the centralizer of $\sigma$ in $\Sym(d)$ (through which we can shuffle cycles of $\sigma$ of the same length arbitrarily), we may assume that $\fcp_{\zeta}(g)=\fcp_{\zeta}(g')$ for each cycle $\zeta$ of $\sigma$. It remains to show that under these additional assumptions, we can choose $\vec{\lambda}\in\Hol(\IZ/m\IZ)^d$ suitably such that $g^{(\id,\vec{\lambda})}=g'$. Note that for each cycle $\zeta$ of $\sigma$, whether the $i$-th components of $g^{(\id,\vec{\lambda})}$ and $g'$ are equal for all indices $i$ on $\zeta$ only depends on those components of $\vec{\lambda}$ that correspond to indices on $\zeta$. We thus aim to construct $\vec{\lambda}$ \enquote{cycle-wise}, and for notational simplicity, we may assume that $\sigma=(0,1,\ldots,d-1)$. Write $\vec{\lambda}=(\lambda(c,e_i))_{i=0,\ldots,d-1}$. Then by formula (\ref{conjugateEq}),
\[
g^{(\id,\vec{\lambda})}=(\sigma,(\lambda(c,e_{\sigma^{-1}(i)})^{-1}\lambda(a,b_i)\lambda(c,e_i))_{i=0,\ldots,d-1}),
\]
and
\begin{align*}
\lambda(c,e_{\sigma^{-1}(i)})^{-1}\lambda(a,b_i)\lambda(c,e_i)&=\lambda(c^{-1},-c^{-1}e_{\sigma^{-1}(i)})\lambda(ac,cb_i+e_i) \\
&=\lambda(a,cb_i+e_i-ae_{\sigma^{-1}(i)}).
\end{align*}
Hence
\[
g^{(\id,\vec{\lambda})}=(\sigma,(\lambda(a,cb_0+e_0-ae_{d-1}),\lambda(a,cb_1+e_1-ae_0),\ldots,\lambda(a,cb_{d-1}+e_{d-1}-ae_{d-2}))).
\]
For simplicity, set $c:=1$ (as we will see, this is still strong enough to achieve $g^{(\id,\vec{\lambda})}=g'$). Then
\[
g^{(\id,\vec{\lambda})}=(\sigma,(\lambda(a,b_0+(e_0-ae_{d-1})),\lambda(a,b_1+(e_1-ae_0)),\ldots,\lambda(a,b_{d-1}+(e_{d-1}-ae_{d-2})))).
\]
Note that we may choose $e_0,\ldots,e_{d-1}\in\IZ/m\IZ$ arbitrarily. This allows us to make all but one of the second entries $b_i+(e_i-ae_{\sigma^{-1}(i)})$ assume arbitrary values. Say we choose $\vec{\lambda}$ such that $g^{(\id,\vec{\lambda})}$ agrees with $g'$ in all components of their respective second entries except the last. Then it follows that $g^{(\id,\vec{\lambda})}=g'$ -- the last components must be equal as well, because of $\fcp_{\sigma}(g^{(\id,\vec{\lambda})})=\fcp_{\sigma}(g')$.
\end{proof}

Using Lemma \ref{wpcLem2}, one can prove the following analogue of Proposition \ref{wdmProp}:

\begin{propposition}\label{wdmeProp}
Let $d$ and $m$ be positive integers.
\begin{enumerate}
\item The following is a complete system of representatives of the conjugacy classes of long-cycle elements in $W_=(d,m)$: For each $a\in(\IZ/m\IZ)^{\ast}$ such that $a^d\equiv1\Mod{\rad'(m)}$, the element
\[
L_a^{(=)}:=((0,\ldots,d-1),(\lambda(a,1),\lambda(a,0),\ldots,\lambda(a,0))).
\]
\item The following is a complete system of representatives of the conjugacy classes of involutions in $W_=(d,m)$: For each $k=0,1,\ldots,\lfloor\frac{d}{2}\rfloor$, each involution $a\in(\IZ/m\IZ)^{\ast}$ and each lexicographically ordered $(d-2k)$-tuple
\[
\vec{\iota}=(\lambda(a,b_{2k}),\ldots,\lambda(a,b_{d-1}))
\]
of involution conjugacy class representatives in $\Hol(\IZ/m\IZ)$, the element
\begin{align*}
&I_{k,a,\vec{\iota}}^{(=)}:= \\
&((0,1)\cdots(2k-1,2k)(2k+1)\cdots(d-1), \\
&(\lambda(a,0),\ldots,\lambda(a,0),\lambda(a,b_{2k}),\ldots,\lambda(a,b_{d-1}))).
\end{align*}
\end{enumerate}
\end{propposition}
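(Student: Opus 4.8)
The plan is to combine Lemma~\ref{characLem}, which characterizes the $(dm)$-cycles and the involutions inside $W(d,m)\supseteq W_=(d,m)$, with Lemma~\ref{wpcLem2}, which decides conjugacy between elements of $W_=(d,m)$, and to feed in the descriptions of the conjugacy classes of $m$-cycles and of involutions in $\Hol(\IZ/m\IZ)$ from Subsections~\ref{subsec4P2}--\ref{subsec4P4}. One preliminary point is worth recording: although Lemma~\ref{wpcLem2} is phrased in terms of $W(d,m)$-conjugacy, the conjugators constructed in its proof (the elements $(\psi,(\lambda(1,0),\ldots,\lambda(1,0)))$ and $(\id,(\lambda(1,e_i))_{i=0,\ldots,d-1})$) already lie in $W_=(d,m)$; hence $W(d,m)$-conjugacy and $W_=(d,m)$-conjugacy agree on $W_=(d,m)$, so we may apply Lemma~\ref{wpcLem2} directly to decide conjugacy in $W_=(d,m)$.

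For statement~(1) I would argue in three steps. First, each $L_a^{(=)}$ is a $(dm)$-cycle: its permutation part is the $d$-cycle $(0,1,\ldots,d-1)$, and by the forward-cycle-product formula in Lemma~\ref{characLem}(1) its unique forward cycle product is $\lambda(a^d,a^{d-1})$, which --- since $a^{d-1}$ is automatically coprime to $m$ --- is an $m$-cycle of $\Hol(\IZ/m\IZ)$ precisely when $a^d\equiv1\Mod{\rad'(m)}$, by Knuth's Theorem~\ref{knuthTheo}. Second, given an arbitrary $(dm)$-cycle $g=(\sigma,(\lambda(a,b_i))_{i=0,\ldots,d-1})\in W_=(d,m)$, Lemma~\ref{characLem}(1) forces $\sigma$ to be a $d$-cycle and $\fcp_\sigma(g)=\lambda(a^d,\beta)$ to be an $m$-cycle, so $a^d\equiv1\Mod{\rad'(m)}$; moreover, by the remark following Theorem~\ref{knuthTheo} all $m$-cycles of $\Hol(\IZ/m\IZ)$ with this fixed multiplier $a^d$ form a single conjugacy class, so $\fcp_\sigma(g)$ is $\Hol(\IZ/m\IZ)$-conjugate to $\lambda(a^d,a^{d-1})=\fcp_{(0,\ldots,d-1)}(L_a^{(=)})$. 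Hence $g$ and $L_a^{(=)}$ have conjugate permutation parts, the same multiplier $a$, and agreeing multisets $M_\ell(\cdot)$ for all $\ell$, so $g$ is conjugate to $L_a^{(=)}$ by Lemma~\ref{wpcLem2}. Third, for $a\ne a'$ the elements $L_a^{(=)}$ and $L_{a'}^{(=)}$ have distinct multipliers and so are non-conjugate by Lemma~\ref{wpcLem2}; together these three steps prove~(1).

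For statement~(2) the same pattern applies. Each $I_{k,a,\vec\iota}^{(=)}$ is an involution by Lemma~\ref{characLem}(2): its permutation part is an involution with $k$ transpositions, over each transposition the two coordinates both equal $\lambda(a,0)$ so the relevant forward cycle product equals $\id$ because $a^2=1$, and over each fixed point the coordinate is, by hypothesis, an involution of $\Hol(\IZ/m\IZ)$. Conversely, for an arbitrary involution $g=(\sigma,(\lambda(a,b_i))_{i=0,\ldots,d-1})\in W_=(d,m)$, Lemma~\ref{characLem}(2) forces $\sigma$ to be an involution of $\Sym(d)$, say with $k$ transpositions; comparing multipliers in conditions (a) and (b) of Lemma~\ref{characLem}(2) --- treating the case $d=2k$, where one argues with a transposition, and the case $d>2k$, where one argues with a fixed point, separately --- forces $a^2=1$. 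Then, for every $\ell$, $M_\ell(g)$ agrees with $M_\ell(I_{k,a,\vec\iota}^{(=)})$ for the \emph{unique} lexicographically ordered tuple $\vec\iota$ of involution conjugacy class representatives in $\Hol(\IZ/m\IZ)$ whose underlying multiset of classes equals the multiset of $\Hol(\IZ/m\IZ)$-conjugacy classes of the fixed-point coordinates of $g$ (all of which have multiplier $a$), so Lemma~\ref{wpcLem2} yields $g\sim I_{k,a,\vec\iota}^{(=)}$. For distinctness, $(k,a,\vec\iota)\ne(k',a',\vec\iota')$ forces either non-conjugate permutation parts (if $k\ne k'$), or distinct multipliers (if $a\ne a'$), or, since a lexicographically ordered tuple is determined by its multiset of classes, distinct multisets $M_1(\cdot)$ (if $\vec\iota\ne\vec\iota'$); in each case Lemma~\ref{wpcLem2} gives non-conjugacy.

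The conceptual content is carried entirely by Lemmas~\ref{characLem} and~\ref{wpcLem2} and by the facts about $\Hol(\IZ/m\IZ)$ recalled in Section~\ref{sec4}, so I expect the remaining work to be bookkeeping: performing the elementary computations of the forward cycle products of $L_a^{(=)}$ and $I_{k,a,\vec\iota}^{(=)}$, and checking that the prescribed normal forms are attained \emph{exactly once} given the lexicographic-ordering convention and the chosen system of $\Hol(\IZ/m\IZ)$-representatives. The one genuinely delicate point is the preliminary observation that the conjugators produced in the proof of Lemma~\ref{wpcLem2} remain inside $W_=(d,m)$, which is precisely what allows that lemma to be used for $W_=(d,m)$-conjugacy rather than just $W(d,m)$-conjugacy.
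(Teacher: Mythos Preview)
Your proposal is correct and follows precisely the approach the paper indicates: the paper does not give a detailed proof of this proposition, stating only that it is proved ``using Lemma~\ref{wpcLem2}'' analogously to Proposition~\ref{wdmProp}, and you have filled in exactly that analogy. Your preliminary observation---that the conjugators built in the proof of Lemma~\ref{wpcLem2} already lie in $W_=(d,m)$ (indeed in $W_1(d,m)$), so that the lemma decides $W_=(d,m)$-conjugacy and not merely $W(d,m)$-conjugacy---is the one nontrivial point beyond pure bookkeeping, and you have identified and justified it correctly.
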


This leads to the following analogue of Corollary \ref{wdmCor}:

\begin{corrollary}\label{wdmeCor}
Let $d$ be a positive integer, and let $q$ be a prime power with $d\mid q-1$. Fix a primitive root $\omega$ of $\IF_q$, let $C$ be the index $d$ subgroup of $\IF_q^{\ast}$, and set $C_i:=\omega^iC$ for $i=0,1,\ldots,d-1$.
\begin{enumerate}
\item The following is a complete system of representatives of the conjugacy classes of long-cycle elements of $\CP(d,q)$: For each $a\in(\IZ/\frac{q-1}{d}\IZ)^{\ast}$ such that $a^d\equiv1\Mod{\rad'(\frac{q-1}{d})}$, the permutation $L_a^{(=)}:\IF_q^{\ast}\rightarrow\IF_q^{\ast}$ with
\[
L_a^{(=)}(x)=\begin{cases}\omega^{i+1-ia}x^a, & \text{if }x\in C_i\text{ for some }i=0,1,\ldots,d-2, \\ \omega^ax^a, & \text{if }x\in C_{d-1}.\end{cases}
\]
\item The following is a complete system of representatives of the conjugacy classes of involutions in $\CP(d,q)$: For each $k=0,1,\ldots,\lfloor\frac{d}{2}\rfloor$, each involution $a\in(\IZ/m\IZ)^{\ast}$ and each lexicographically ordered $(d-2k)$-tuple
\[
\vec{\iota}=(\lambda(a,b_{2k}),\ldots,\lambda(a,b_{d-1}))
\]
of involution conjugacy class representatives in $\Hol(\IZ/\frac{q-1}{d}\IZ)$, the permutation $I_{k,a,\vec{\iota}}^{(=)}:\IF_q^{\ast}\rightarrow\IF_q^{\ast}$ with
\[
I_{k,a,\vec{\iota}}^{(=)}(x)=\begin{cases}\omega^{i+1-ia}x^a, & \text{if }x\in C_i\text{ for some }i=0,2,\ldots,2k-2, \\ \omega^{i-(i+1)a}x^a, & \text{if }x\in C_i\text{ for some }i=1,3,\ldots,2k-1, \\ \omega^{i(1-a)+db_i}x^a, & \text{if }x\in C_i\text{ for some }i=2k,2k+1,\ldots,d-1.\end{cases}
\]
\end{enumerate}
\end{corrollary}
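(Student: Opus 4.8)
The plan is to deduce the corollary from Proposition~\ref{wdmeProp} by transport of structure along the permutation group isomorphism of Theorem~\ref{wreathIsoTheo}, exactly as Corollary~\ref{wdmCor} was deduced from Proposition~\ref{wdmProp}. Set $m:=\frac{q-1}{d}$. Compose the permutation group isomorphism $\Hol(\IZ/m\IZ)\wr_{\imp}\Sym(d)\to\Hol(C)\wr_{\imp}\Sym(d)$ induced by the group isomorphism $\IZ/m\IZ\to C$, $x+m\IZ\mapsto\omega^{dx}$, with the isomorphism $\iota_{\omega}:\Hol(C)\wr_{\imp}\Sym(d)\to\GCP(d,q)$. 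By Theorem~\ref{wreathIsoTheo}(1), this composite carries $W_=(d,m)$ isomorphically onto $\CP(d,q)$. An isomorphism of permutation groups maps conjugacy classes to conjugacy classes, preserves cycle structure (so it sends $(dm)$-cycles, i.e.\ long-cycle elements, to long-cycle elements, since $dm=q-1=|\IF_q^{\ast}|$), and sends involutions to involutions; hence it carries a complete system of conjugacy class representatives of the long-cycle elements (resp.\ of the involutions) of $W_=(d,m)$ to a complete system of conjugacy class representatives of the long-cycle elements (resp.\ of the involutions) of $\CP(d,q)$. So it suffices to compute the images under the composite of the representatives $L_a^{(=)}$ and $I_{k,a,\vec{\iota}}^{(=)}$ furnished by Proposition~\ref{wdmeProp}.

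For the computation, recall that under the first isomorphism an affine permutation $\lambda(a,b)$ of $\IZ/m\IZ$ becomes the affine permutation $\lambda(a,\omega^{db}):c\mapsto\omega^{bd}c^a$ of $C$, and that $\iota_{\omega}$ sends $(\psi,(\lambda(s_i,b_i))_{i=0,\ldots,d-1})$ to $f_{\omega}((\omega^{\psi(i)-is_{\psi(i)}}b_{\psi(i)})_{i},(s_{\psi(i)})_{i})_{\mid\IF_q^{\ast}}$. Applying this to $L_a^{(=)}=((0,1,\ldots,d-1),(\lambda(a,1),\lambda(a,0),\ldots,\lambda(a,0)))$: here $\psi$ is the $d$-cycle $i\mapsto i+1\pmod d$ and every multiplicative part equals $a$, so each exponent $r_i$ becomes $a$ and the scalar attached to the coset $C_i=\omega^iC$ is read off from the displayed formula, the single nontrivial translation sitting at coordinate $0$ (which is reached only from coordinate $d-1$); collecting the two resulting cases yields the piecewise description in part~(1). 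Applying the same recipe to $I_{k,a,\vec{\iota}}^{(=)}$, where $\psi=(0,1)(2,3)\cdots(2k-2,2k-1)(2k)(2k+1)\cdots(d-1)$: for an index $i$ lying on a transposition one substitutes $\psi(i)$ (its partner on that transposition) together with the trivial translation at coordinate $\psi(i)$, and for a fixed point $i\in\{2k,\ldots,d-1\}$ one substitutes $\psi(i)=i$ together with the translation $\omega^{db_i}$; collecting the three cases gives the formula in part~(2).

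The conceptual content is already contained in Theorem~\ref{wreathIsoTheo}, in Proposition~\ref{wdmeProp} (itself obtained from Lemma~\ref{wpcLem2} just as Proposition~\ref{wdmProp} is obtained from Lemma~\ref{wpcLem}), and in the general fact that permutation group isomorphisms respect conjugacy, cycle structure, and being an involution; what is left is purely computational. The one point that needs care is that the multiplicative exponent $a$ is well-defined only modulo $|C|=m$, whereas the scalar factors $\omega^{\bullet}$ are exponents modulo $q-1=dm$: one has to fix the representative of $a$ and then reduce the $\omega$-exponents consistently, and at the same time keep track of how $\psi$ moves the coset indices (the wrap-around $d-1\mapsto0$ in the long-cycle case, and the swap of the two indices within each transposition in the involution case), since those indices enter the exponents of $\omega$. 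That bookkeeping, rather than any new idea, is the only obstacle; the non-conjugacy of distinct representatives transfers for free from Proposition~\ref{wdmeProp} via the isomorphism.
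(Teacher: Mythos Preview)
Your proposal is correct and follows exactly the approach the paper intends: the paper does not give a separate proof of this corollary but simply says ``This leads to the following analogue of Corollary~\ref{wdmCor}'', i.e., set $m=(q-1)/d$ in Proposition~\ref{wdmeProp} and transport the representatives through the composite isomorphism $\Hol(\IZ/m\IZ)\wr_{\imp}\Sym(d)\to\Hol(C)\wr_{\imp}\Sym(d)\to\GCP(d,q)$ from Theorem~\ref{wreathIsoTheo}, just as in the proof of Corollary~\ref{wdmCor}. Your remark about the bookkeeping (fixing an integer representative for $a$ and tracking the wrap-around of $\psi$) is well taken and is precisely the ``exercise to the reader'' the paper alludes to.
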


\subsection{Inversion of generalized cyclotomic permutations in polynomial form}\label{subsec5P3}

Let $d$ be a positive integer, let $q$ be a prime power with $d\mid q-1$, let $\omega$ be a primitive root of $\IF_q$, set $\zeta:=\omega^{\frac{q-1}{d}}$, let $C$ be the the index $d$ subgroup of $\IF_q^{\ast}$, set $C_i:=\omega^iC$ for $i=0,1,\ldots,d-1$, and let $f:\IF_q\rightarrow\IF_q$ be an index $d$ generalized cyclotomic permutation of $\IF_q$. Then the $\omega$-cyclotomic form $f_{\omega}(\vec{a},\vec{r})$ of $f$ is unique, because none of the entries of $\vec{a}$ can be $0$ (due to the injectivity of $f$). In \cite{ZYZP16a}, Zheng, Yu, Zhang and Pei gave the following formula for the polynomial form of the inverse function $f^{-1}$:

\begin{theoremm}\label{zyzpTheo}(Zheng-Yu-Zhang-Pei, see \cite[Theorem 3.3]{ZYZP16a})
With notation as above, let $\vec{a}=(a_0,\ldots,a_{d-1})$, let $\vec{r}=(r_0,\ldots,r_{d-1})$ and, using that $\gcd(r_i,\frac{q-1}{d})=1$ for each $i$, let $\tilde{r_i}\in\{1,2,\ldots,\frac{q-1}{d}\}$ and $t_i\in\IZ$ be such that $r_i\tilde{r_i}+\frac{q-1}{d}t_i=1$ (note that $\tilde{r_i}$ and $t_i$ are uniquely determined by this, with $\tilde{r_i}$ being the multiplicative inverse of $r_i$ modulo $\frac{q-1}{d}$). Then the polynomial form of the inverse function of $f$ is
\[
\frac{1}{d}\sum_{i,j=0}^{d-1}{\zeta^{i(t_i-jr_i)}a_i^{-\tilde{r_i}-j\frac{q-1}{d}}T^{\tilde{r_i}+j\frac{q-1}{d}}}.
\]
\end{theoremm}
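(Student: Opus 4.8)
The plan is to determine the $\omega$-cyclotomic form of $f^{-1}$ explicitly, feed it into Wang's formula (Theorem~\ref{cycloPolyTheo}(1)) to obtain the polynomial form of $f^{-1}$, and then reindex the resulting double sum so as to eliminate the coset permutation of $f$. Write $\psi := \psi_\omega(f_\omega(\vec a,\vec r))$ for the unique permutation of $\{0,\dots,d-1\}$ with $f(C_i) = C_{\psi(i)}$ for all $i$. Since $\psi$ is a bijection, $f^{-1}$ is again an index $d$ generalized cyclotomic permutation of $\IF_q$ (it is monomial on each coset and fixes $0$), so it has a unique $\omega$-cyclotomic form $f_\omega(\vec{a'},\vec{r'})$. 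To compute it, write an element of $C_i$ as $\omega^i c$ with $c\in C$; then $f(\omega^i c)=a_i\omega^{ir_i}c^{r_i}$. From $\gcd(r_i,\tfrac{q-1}{d})=1$ we get $r_i\tilde{r}_i\equiv 1\Mod{\tfrac{q-1}{d}}$, and $c^{(q-1)/d}=1$ gives $c^{r_i\tilde{r}_i}=c$; hence $y:=f(\omega^i c)$ satisfies $y^{\tilde{r}_i}=(a_i\omega^{ir_i})^{\tilde{r}_i}c$, so $f^{-1}(y)=\omega^i c=\omega^i(a_i\omega^{ir_i})^{-\tilde{r}_i}\,y^{\tilde{r}_i}$ for $y\in C_{\psi(i)}$. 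Therefore $r'_{\psi(i)}=\tilde{r}_i\in\{1,\dots,\tfrac{q-1}{d}\}$ (so this is a legitimate cyclotomic form), and, using $1-r_i\tilde{r}_i=\tfrac{q-1}{d}t_i$,
\[
a'_{\psi(i)}=\omega^{\,i(1-r_i\tilde{r}_i)}a_i^{-\tilde{r}_i}=\omega^{\,i\frac{q-1}{d}t_i}a_i^{-\tilde{r}_i}=\zeta^{\,it_i}a_i^{-\tilde{r}_i}.
\]

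Next I would apply Theorem~\ref{cycloPolyTheo}(1) to $f_\omega(\vec{a'},\vec{r'})$: its polynomial form is $\tfrac1d\sum_{k,j=0}^{d-1}\zeta^{-kj}a'_k\,T^{j\frac{q-1}{d}+r'_k}$. Substituting $k=\psi(i)$ and letting $i$ run over $\{0,\dots,d-1\}$ (a bijective reindexing) turns this into $\tfrac1d\sum_{i,j=0}^{d-1}\zeta^{-\psi(i)j}\zeta^{it_i}a_i^{-\tilde{r}_i}\,T^{j\frac{q-1}{d}+\tilde{r}_i}$. The one slightly delicate point is to eliminate $\psi$: since $f(\omega^i)=a_i\omega^{ir_i}$ lies in $C_{\psi(i)}=\{x\in\IF_q^\ast:x^{(q-1)/d}=\zeta^{\psi(i)}\}$, raising to the power $\tfrac{q-1}{d}$ yields the identity $\zeta^{\psi(i)}=a_i^{(q-1)/d}\zeta^{ir_i}$, so that $\zeta^{-\psi(i)j}=a_i^{-j(q-1)/d}\zeta^{-ir_ij}$. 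Substituting this and collecting the powers of $\zeta$ and of $a_i$ turns the sum into
\[
\frac1d\sum_{i,j=0}^{d-1}\zeta^{\,i(t_i-jr_i)}\,a_i^{-\tilde{r}_i-j\frac{q-1}{d}}\,T^{\tilde{r}_i+j\frac{q-1}{d}},
\]
which is therefore the polynomial form of $f^{-1}$.

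I expect the bulk of the argument to be routine once this strategy is in place; the only genuine idea is the identity $\zeta^{\psi(i)}=a_i^{(q-1)/d}\zeta^{ir_i}$ used to dispense with $\psi$. The main obstacle is instead careful bookkeeping: consistently distinguishing the index that runs over the source cosets of $f$ from the one running over its image cosets, and keeping track of which exponents are read modulo $\tfrac{q-1}{d}$ (where $\gcd(r_i,\tfrac{q-1}{d})=1$ enters), which modulo $q-1$, and which modulo $d$ (the order of $\zeta$).
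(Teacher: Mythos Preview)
Your argument is correct. The paper does not actually prove Theorem~\ref{zyzpTheo}; it attributes the result to \cite{ZYZP16a} and to the shorter proof in \cite{Wan17a}, and only sketches a third route via Theorems~\ref{wreathIsoTheo} and~\ref{cycloPolyTheo}: convert to wreath product form, invert there, and convert back. Your proof bypasses the wreath product entirely by computing the cyclotomic form of $f^{-1}$ directly on each coset $C_{\psi(i)}$ and then applying Theorem~\ref{cycloPolyTheo}(1). This is closer in spirit to the approach of \cite{Wan17a} than to the paper's suggested route, and it is shorter: the only nontrivial step is the identity $\zeta^{\psi(i)}=a_i^{(q-1)/d}\zeta^{ir_i}$, which lets you trade the implicit coset permutation $\psi$ for explicit data in $a_i$ and $r_i$. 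The paper's wreath-product route would recover the same cyclotomic form of $f^{-1}$ but at the cost of two extra conversions; its payoff is conceptual (inversion becomes a group-theoretic operation), not computational.
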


We note that the second author in \cite{Wan17a} gave a different, shorter proof of Theorem \ref{zyzpTheo}, and using our Theorems \ref{wreathIsoTheo} and \ref{cycloPolyTheo}, one can give yet another proof of this formula, by explicitly converting from cyclotomic form to wreath product form, then applying the inversion of the wreath product group, and finally converting the result of that inversion back to cyclotomic form and then polynomial form. However, this proof would still be longer than the one in \cite{Wan17a}, and we refrain from providing the technial details of it here.

We conclude this paper by observing that through combining our Algorithm \ref{algo1} with Theorem \ref{zyzpTheo}, one can pass from the polynomial form of $f$ to that of $f^{-1}$, without needing to have explicit knowledge of the $\omega$-cyclotomic form of $f$. This is demonstrated in the following example:

\begin{exxample}\label{zyzpEx}
Let $d=2$, $q=25$, let $\omega$ be a root of the Conway polynomial $T^2-T+2\in\IF_5[T]$, and consider once more the polynomial
\[
P=\omega^{15}T^5+\omega^{23}T^7+\omega^3T^{17}+\omega^{23}T^{19}
\]
from Example \ref{unnumberedEx}. Assume we are given the task of deciding whether $P$ represents an index $2$ generalized cyclotomic permutation $f$ of $\IF_{25}$ and, if so, finding the polynomial form of $f^{-1}$. Following Example \ref{unnumberedEx}, we find that $P$ indeed represents an index $2$ generalized cyclotomic permutation of $\IF_{25}$, namely
\[
f=f_{\omega}((\omega^5,\omega^{21}),(7,5)).
\]
We can use Theorem \ref{zyzpTheo} to compute the polynomial form of $f^{-1}$. First, we determine $(\tilde{r_0},t_0)$ and $(\tilde{r_1},t_1)$ with the extended Euclidean algorithm:
\begin{align*}
12&=1\cdot7+5, \\
7&=1\cdot5+2, \\
5&=2\cdot2+1,
\end{align*}
whence
\begin{align*}
1&=5-2\cdot2 \\
&=5-2\cdot(7-1\cdot5)=3\cdot5-2\cdot7 \\
&=3\cdot(12-1\cdot7)-2\cdot7=3\cdot12-5\cdot7 \\
&=(3-7)\cdot 12+(-5+12)\cdot7=(-4)\cdot12+7\cdot7.
\end{align*}
We conclude that $\tilde{r_0}=7$ and $t_0=-4$. Analogously, one computes $\tilde{r_1}=5$ and $t_1=-2$. So, in summary, we have
\[
a_0=\omega^5,r_0=\tilde{r_0}=7,t_0=-4
\]
and
\[
a_1=\omega^{21},r_1=\tilde{r_1}=5,t_1=-2.
\]
Observing that $\zeta=\omega^{12}=-1$ and using Theorem \ref{zyzpTheo}, we conclude that the polynomial form of $f^{-1}$ is
\begin{align*}
&\frac{1}{2}\sum_{i,j=0}^1{(-1)^{i(t_i-jr_i)}a_i^{-\tilde{r_i}-12j}T^{\tilde{r_i}+12j}} \\
&=\frac{1}{2}(\omega^{-5\cdot7}T^7+\omega^{-5(7+12)}T^{19}+(-1)^{-2}\omega^{-21\cdot5}T^5+(-1)^7\omega^{-21(5+12)}T^{17}) \\
&=\omega^9T^5+\omega^7T^7+\omega^9T^{17}+\omega^{19}T^{19}.
\end{align*}
Note that it is just a coincidence (caused by $r_i=\tilde{r_i}$ for $i=0,1$) that the term degrees of $P$ and this polynomial are the same.
\end{exxample}

\end{document}